\documentclass{amsart}
\usepackage{mathtools}

\usepackage[all]{xy}
\usepackage{amssymb}
\usepackage{amsmath}
\usepackage{cite}
\usepackage{fullpage}

\usepackage{url}
\usepackage{hyperref}
\numberwithin{equation}{subsection}
\numberwithin{figure}{subsection}
\usepackage{longtable}
\usepackage{tikz}
\usepackage{tikz-cd}
\usepackage{times}

\usepackage{setspace}

\newtheorem{theorem}{Theorem}[section]
\newtheorem{lemma}[theorem]{Lemma}
\newtheorem{proposition}[theorem]{Proposition}
\newtheorem{corollary}[theorem]{Corollary}
\theoremstyle{definition}
\newtheorem{definition}[theorem]{Definition}
\newtheorem*{remark*}{Remark}
\newtheorem{remark}[theorem]{Remark}
\newtheorem*{remarks*}{Remarks}
\newtheorem*{definition*}{Definition}
\newtheorem{example}{Example}[section]
\newtheorem{question*}{Question}[section]

\renewcommand{\tilde}{\widetilde}

\newcommand{\CC}{\mathbb{C}}
\newcommand{\QQ}{\mathbb{Q}}

\newcommand{\ZZ}{\mathbb{Z}}

\newcommand{\FF}{\mathbb{F}}
\newcommand{\PP}{\mathbb{P}}

\newcommand{\ga}{G^{\text{arith}}}
\newcommand{\gm}{G^{\text{geom}}}

\newcommand\restr[1]{\raisebox{-.5ex}{$|$}_{#1}}

\DeclareMathOperator{\Gal}{Gal}
\DeclareMathOperator{\sgn}{sgn}

\DeclareMathOperator{\Frob}{Frob}
\DeclareMathOperator{\im}{im}
\DeclareMathOperator{\id}{id}

\DeclareMathOperator{\Aut}{Aut}

\newcommand{\customlangle}{%
\begin{tikzpicture}[baseline=0.5ex]
    \draw coordinate (a) at (.35ex,2.2ex);
    \draw coordinate (b) at (0,1.1ex);
    \draw coordinate (c) at (.35ex,0);
    \draw[line width=0.12ex,line cap=round] (a) -- (b) -- (c);
\end{tikzpicture}%
}
\newcommand{\customrangle}{%
\begin{tikzpicture}[baseline=0.5ex]
    \draw coordinate (a) at (0,2.2ex);
    \draw coordinate (b) at (.35ex,1.1ex);
    \draw coordinate (c) at (0,0);
    \draw[line width=0.12ex,line cap=round] (a) -- (b) -- (c);
\end{tikzpicture}%
}

\begin{document}
	
\title{Galois theory of Quadratic Rational functions with periodic critical points}

\author{\"{O}zlem Ejder}
\address{Ko\c{c} University, College of Science}
\email{ozejder@ku.edu.tr}
\thanks{The author was supported by BAGEP Award of the Science Academy in Turkey.}

\subjclass[2020]{37P05,37P15, 37P25,11R32, 20E08,}

\date{}

\keywords{Arboreal Galois groups, Iterated monodromy groups, self-similar groups, profinite groups, groups acting on trees}

\begin{abstract}
Given a number field $k$, and a quadratic rational function $f(x) \in k(x)$, the associated arboreal representation of the absolute Galois group of $k$ is a subgroup of the automorphism group of a regular rooted binary tree. Boston and Jones conjectured that the image of such a representation for $f \in \ZZ[x]$ contains a dense set of settled elements. An automorphism is settled if the number of its orbits on the $n\text{th}$ level of the tree remains small as $n$ goes to infinity.

In this article, we exhibit many quadratic rational functions whose associated Arboreal Galois groups are not densely settled. These examples arise from quadratic rational functions whose critical points lie in a single periodic orbit. To prove our results, we present a detailed study of the iterated monodromy groups (IMG) of $f$, which also allows us to provide a negative answer to Jones and Levy's question regarding settled pairs.

Furthermore, we study the iterated extension $k(f^{-\infty}(t))$ generated by adjoining to $k(t)$ all roots of $f^n(x) = t$ for $n \geq 1$
for a parameter $t$. We call the intersection of $k(f^{-\infty}(t))$ with $\bar{k}$, the field of constants associated with $f$. When one of the two critical points of $f$ is the image of the other, we show that the field of constants is contained in the cyclotomic extension of $k$ generated by all $2$-power roots of unity. In particular, we prove the conjecture of Ejder, Kara, and Ozman regarding the rational function $\frac{1}{(x-1)^2}$. 

\end{abstract}

\date{}
\maketitle
\section{introduction}
Given a polynomial $f(x) \in k[x]$ over a field $k$, we call $f$ settled if the number of irreducible factors of its $n$th iterate remains small as $n$ grows. Boston and Jones \cite{BJ-pams} \cite{BJ-dedicata} \cite{BJ-quarterly} conjectured that a quadratic polynomial over a finite field is settled. This conjecture has implications for the associated arboreal Galois groups of $f$. 

Let $f(x)$ be a rational function of degree two in $k(x)$, where $k$ is a field. Let $\bar{k}$ be a separable closure of $k$. For all $n\geq 1$, $f^n$ denotes the $n\text{th}$ iterate of $f$. Let $a \in k$ and assume that  $f^n(x)-a$ has $2^n$ distinct roots for all $n\geq 1$. We form a regular rooted binary tree $T$ from the pre-image set $f^{-n}(a)$, where the leaves are the elements of the pre-image set and two leaves $v$ and $w$ are directly connected by an edge if $f(v)=w$. Taking the limit as $n \to \infty$, we obtain an infinite tree $T$ on which the absolute Galois group $G_k:=\Gal(\bar{k}/k)$ of $k$ acts naturally. This yields a representation $\rho_{f,a} \colon G_k \to \Omega$, where $\Omega$ denotes the automorphism group of $T$.  We call the image of $\rho_{f,a}$, the arboreal Galois group attached to $(f,a)$ and denote it by $G_{f,a}$. Arboreal representations have applications to certain prime density questions, and they are widely studied (see \cite{Stoll92},\cite{Odoni2}, \cite{Odoni85}, \cite{Jonessurvey}, \cite{JM14},\cite{S16}). For explicit computations of these Galois groups, see for instance \cite{BEK},\cite{Arborealcubic} ,\cite{EjderBelyi}, \cite{EKO}, \cite{arithmeticbasilica}, \cite{BHWL17}, \cite{JKLMTW}. 

By the conjecture of Boston and Jones, the Frobenius elements in the associated arboreal Galois groups of a quadratic rational function in $\ZZ[x]$ exhibit a specific cycle pattern. More precisely, the Frobenius elements in the arboreal Galois groups of $f$ are settled. We call an automorphism of $T$ \emph{settled} if the number of its orbits on $n$th level remains small as $n \to \infty$. See ~\ref{defn:settled} for the exact definition. By the Chebotarev density theorem, the set of Frobenius elements is dense in the associated arboreal Galois groups of $f$. Therefore, the conjecture of Boston and Jones (\cite{BJ-dedicata}, \cite{BJ-quarterly}, \cite{BJ-pams}) implies that the set of settled elements is dense in $G_{f,a}$. A subgroup of $\Omega$ containing a dense set of settled elements is called densely settled. Recently, Cortez and Lukina \cite{CortezLukina} introduced a geometric group theoretic approach to study the property of being densely settled for quadratic polynomials. In this article, we address the question of whether the arboreal Galois groups are densely settled for a non-polynomial rational function. This question coincides with the question posed by Jones and Levy \cite{JonesLevy} on settled pairs. More precisely, an example of an arboreal Galois group which is not densely settled provides a negative answer to their question. We show that there exist many quadratic rational functions $f$ for which the groups $G_{f,a}$ do not contain any settled elements.

Let $k$ be a field and let $f(x) \in k(x)$ be a rational function. The post-critical set of $f$ is defined as
$$P:=\{f^n(c)\mid c \text{ is a critical point of } f, \text{ } n\geq 1\}.$$
If the set $P$ is finite, the map $f$ is called post-critically finite (PCF). In this article, we study the PCF quadratic rational maps whose critical points all lie in a single periodic orbit. An explicit example of such a function is $f(x)=\frac{1}{(x-1)^2}$, whose post-critical set has $3$ points: $\{1,f(1),f^2(1)\}$.

\begin{theorem} \label{thm:intro1}
Let $k$ be a number field. Let $f(x) \in k(x)$ be a quadratic rational function whose post-critical set is $\{p_1, p_2, \ldots, p_r\}$ where $f(p_{k-1})=p_k$ for all $k \in \{1, \ldots, r\}$. Here the indices are considered modulo $r$. Let $p_r$ and $p_{s-1}$ be the critical points of $f$.  If one of the following conditions holds:
\begin{enumerate}
\item $r$ is odd,
\item $r$ is even, $s$ is odd and $\frac{r}{\gcd(r,s-1)}$ is odd,
\end{enumerate}
then the arboreal representation of $(f,a)$ does not contain any settled elements for any $a \in \PP^1(k) \setminus \{p_1,p_2, \ldots,p_r\}$.
\end{theorem}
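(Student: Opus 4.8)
The plan is to reduce the statement to a computation inside the iterated monodromy group and to isolate a single parity obstruction that forbids any element from acting like an adding machine on a full subtree. Throughout write $\Omega=\Aut(T)$, let $\St(n)\le\Omega$ be the $n$th level stabilizer, and for $g\in\Omega$ let $o_n(g)$ denote the order of the image of $g$ in $\Omega/\St(n)$; since $\Omega/\St(n)$ is a $2$-group, $o_n(g)$ is a power of $2$ and equals the size of the largest $g$-orbit on level $n$. The first step is to fix generators $x_1,\dots,x_r$ of the IMG of $f$ given by loops around the post-critical points $p_1,\dots,p_r$, subject to $x_1\cdots x_r=1$, and to write out their wreath recursion. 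Because the two critical values are exactly $p_1=f(p_r)$ and $p_s=f(p_{s-1})$, the loops $x_1$ and $x_s$ are the only \emph{active} generators (their root permutation is the transposition) and each lifts through the double cover to a single loop around the corresponding critical point, while the remaining loops are \emph{inactive} and merely track the cyclic preimage. Concretely I expect recursions of the shape $x_1=(x_r,1)\sigma$, $x_s=(x_{s-1},1)\sigma$, and $x_i=(1,x_{i-1})$ for $i\notin\{1,s\}$, so that on passing one level down the index performs the cyclic shift $i\mapsto i-1\pmod r$, acquiring a factor of the transposition exactly when it passes through the two active positions.

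From this recursion I would extract the level-$n$ sign characters $\ts\colon\Omega\to\ZZ/2$. Using $\ts(g)=\sgn_{n-1}(g_0)\,\sgn_{n-1}(g_1)$ in the inactive case and the vanishing of the sign of $\sigma$ on levels $\ge 2$, the recursion collapses to $\ts(x_i)=\sgn_1(x_{i-(n-1)})$, whence $\ts(x_i)=-1$ iff $n\equiv i$ or $n\equiv i-(s-1)\pmod r$. Thus every sign character is periodic in $n$ modulo $r$, and $g\mapsto(\ts(g))_{n\ge 1}$ sends the group into the mod-$r$ periodic sequences whose pattern lies in the $\FF_2$-subspace $H=\im(1+T^{\,s-1})\le(\ZZ/2)^r$, where $T$ is the cyclic shift. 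A short linear-algebra computation over $\FF_2$ gives the dichotomy at the heart of the argument: the all-ones pattern $\mathbf 1$ (the sign pattern of an adding machine, with $\ts=-1$ for every $n$) lies in $H$ if and only if $m:=r/\gcd(r,s-1)$ is even. Both hypotheses force $m$ odd: in case (1) because $m\mid r$ and $r$ is odd, and in case (2) by direct imposition, the extra parity of $s$ being what pins down the active positions and the level-one base case when $r$ is even. Hence under (1) or (2) we have $\mathbf 1\notin H$, so \emph{no} element has $\ts=-1$ for all $n$; by periodicity every element satisfies $\ts(g)=+1$ for a positive density of levels.

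The payoff is that $\ts(g)=+1$ forces at least two orbits on level $n$, and since all orbit sizes are powers of $2$ summing to $2^n$, it forces $o_n(g)\le 2^{\,n-1}$; combined with $o_{n+1}\le 2o_n$ this already gives $o_n(g)\le 2^{\,n-1}$ for all large $n$, so the largest orbit has proportion bounded away from $1$. To upgrade this to the full settledness statement — that the number of orbits does not stay small, i.e. $o_n(g)=o(2^n)$ for every $g$ — I would feed $\mathbf 1\notin H$ into the order recursion $o_n(g)=2\,o_{n-1}(g_0g_1)$ at an active root and $o_n(g)=\max(o_{n-1}(g_0),o_{n-1}(g_1))$ at an inactive root: a factor of $2$ is gained only at an active step of the induced walk on sections, and since $f$ is PCF its IMG is contracting, so every chain of sections enters a finite nucleus. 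An orbit of size $\sim 2^n$ would require an all-active cycle in the nucleus, which is exactly the forbidden configuration $\mathbf 1\in H$; its absence yields an exponent $\beta<1$ with $o_n(g)\le 2^{\beta n}$ uniformly, hence unboundedly many orbits and non-settledness for every $g$. Since $G_{f,a}\subseteq\ga$ for every $a\in\PP^1(k)\setminus\{p_1,\dots,p_r\}$, it suffices to run all of this inside $\ga$; the arithmetic generators act through the Galois group, and the companion analysis of the field of constants shows they neither enlarge $H$ to contain $\mathbf 1$ nor create a new all-active section-cycle. The level-wise conditions $\{g:o_n(g)\le 2^{\beta n}\ \forall n\}$ cut out a closed subset of $\Omega$, so the estimate established on the dense discrete subgroup passes to the profinite closure and hence to every element of $G_{f,a}$.

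The step I expect to be the real obstacle is this last uniform one: promoting ``no generator, and no sign pattern, is adding-machine-like'' to ``no element whatsoever has order growing like $2^n$.'' The sign characters control only the coarse parity of the orbit count, and a cleverly chosen word could a priori synchronise the two active generators along a single ray to manufacture a near-transitive element; ruling this out rigorously means matching ``all-active cycle in the nucleus'' exactly with ``$\mathbf 1\in H$'' through the contraction, and then verifying that the arithmetic (Galois) part respects the same bound. Getting the wreath recursion and the nucleus completely correct — including the even-$r$ base case governed by the parity of $s$ — is where the detailed study of the IMG will be indispensable.
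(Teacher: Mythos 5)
Your first two paragraphs correctly recover the paper's sign computation and odometer dichotomy: your criterion that the all-ones pattern lies in $\im(1+T^{s-1})$ if and only if $r/\gcd(r,s-1)$ is even is equivalent to the case split of Theorem~\ref{odoG} (for $s$ even, $s-1$ is odd, so $r/\gcd(r,s-1)$ is even exactly when $r$ is), and together with Proposition~\ref{pink:odometer} this gives the paper's Lemma~\ref{sgn:gen} and the non-existence of odometers in $\gm$ under hypotheses (1) and (2). The genuine gap is your third paragraph, where you conflate settledness with near-transitivity or slow orbit growth. A settled element need not have any orbit of size comparable to $2^n$: the paper's example $\beta=(\beta,\gamma)$ with $\gamma=(\gamma,\id)\sigma$ is settled and has a fixed point, and conversely a settled element can have an unbounded number of orbits (new stable cycles keep appearing at deeper levels). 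So neither a bound $o_n(g)\leq 2^{\beta n}$ nor ``unboundedly many orbits'' negates Definition~\ref{defn:settled}, which concerns the proportion of vertices lying in \emph{stable cycles}. The correct bridge --- the paper's key step, Proposition~\ref{sodo} --- is local rather than global: if a vertex $w$ at level $n$ lies in a stable cycle of length $k$ of $\gamma$, then the section of $\gamma^k$ at $w$ is an odometer (stability forces the vertices above the cycle to form a single $2^{m-n}k$-cycle at every level $m$), and self-similarity of the group places that section back in the group (Corollary~\ref{cor:sodo}). This replaces your contracting-nucleus scheme entirely; as it stands, your scheme rests on an unproved equivalence between ``all-active cycle in the nucleus'' and $\mathbf{1}\in H$, a contraction property you never verify, and a closure-passage step at the end that transfers an estimate which was never established on the discrete group in the first place.

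The second gap is the arithmetic side. The arboreal group $G_{f,a}$ sits inside $\ga(f)$, which normalizes but in general strictly contains $\gm(f)$, so the sign vectors of its elements need not lie in your subspace $H$; the appeal to ``the companion analysis of the field of constants'' is not an argument. The paper closes this with Theorem~\ref{odoN}: if $\gamma=(\gamma_0,\gamma_1)\sigma$ lies in $N_\Omega(G)$, then $\gamma a_1\gamma^{-1}=(\gamma_0\gamma_1^{-1},\gamma_1 a_r\gamma_0^{-1})\sigma\in G$, so by self-similarity of $G$ the element $\gamma_0\gamma_1^{-1}$ lies in $G$; since $\sgn_{n+1}(\gamma)=\sgn_n(\gamma_0\gamma_1^{-1})$, an odometer in the normalizer would force an odometer inside $G$ itself, contradicting the sign obstruction. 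Combining this with Proposition~\ref{sodo} and the (asserted) self-similarity of $\ga$ yields the theorem: a settled element of $G_{f,a}\subseteq\ga$ would produce an odometer in $\ga\subseteq N_\Omega(\gm)$, which is impossible. Your paragraphs 1--2 supply the sign obstruction, but paragraphs 3--4 supply neither of these two missing lemmas, and the route proposed there would not succeed even if completed, because its target condition is not equivalent to non-settledness.
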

Let $f(x)=\frac{1}{(x-1)^2}$. By Theorem~\ref{thm:intro1}, the arboreal representation of $(f,a)$ for $a \in k \setminus \{0,1\}$ does not contain any settled elements.

The proof of Theorem~\ref{thm:intro1} requires a detailed study of the geometric and arithmetic iterated monodromy groups of $f$.
For each $n\geq 1$, one can view the iterate $f^{n}$ of $f$ as a morphism from the projective line $\PP^1_k$ to itself. Excluding the post-critical set $$P:=\{f^n(c) \mid c \text{ is a critical point of } f, \text{ } n\geq 1\}, $$ one obtains an unramified covering of $\PP^1_k \setminus P$ and the monodromy group of this cover is a quotient of the {\'e}tale fundamental group of $\PP_k^1\setminus P$. This process yields two monodromy groups depending on the field we are working on: $k$ (arithmetic) or the separable closure $\bar{k}$ (geometric) of $K$. Furthermore, by taking the limit as $n \to \infty$, we obtain the geometric ($\gm(f)$) and arithmetic iterated monodromy groups ($\ga(f)$) associated with $f$. In essence, the arboreal representations can be viewed as specializations of the arithmetic iterated monodromy groups. We use IMG as a shorthand for iterated monodromy group, and we note that IMGs are always profinite unless stated otherwise.

 Pink \cite{Pinkpolyn}, \cite{Pinkrational} provided a comprehensive analysis of monodromy groups in the cases where $f$ is a quadratic polynomial and $f$ is a quadratic rational function with an infinite post-critical set. The case involving PCF rational functions remains unsolved. In collaboration \cite{EKO} with Kara and Ozman, we studied the IMGs associated with the function $f(x)=\frac{1}{(x-1)^2}$. The current article extends this study to the PCF rational functions of degree $2$ whose critical points lie in a single periodic orbit. 

Our key observation in proving Theorem~\ref{thm:intro1} is Proposition~\ref{sodo}, where we prove that if a self-similar group contains a settled element, then it must contain an odometer. An automorphism of a regular rooted binary tree is called an odometer if it has only one orbit on each level of the tree. Since both the geometric and arithmetic IMGs are self-similar, it becomes natural to study the odometers in these groups. 

\begin{theorem}
Let $k$ be a number field. Let $f(x) \in k(x)$ be a quadratic rational function whose post-critical set is $\{p_1, p_2, \ldots, p_r\}$, where $f(p_{k-1}) = p_k$ for all $k \in \{1, \ldots, r\}$, with indices modulo $r$. Let $p_r$ and $p_{s-1}$ be the critical points of $f$. Then the geometric IMG of $f(x)$ contains odometers if and only if either:
\begin{itemize}
\item both $r$ and $s$ are even, or
\item $s$ is odd and $\frac{r}{\gcd(r, s-1)}$ is even.
\end{itemize}
\end{theorem}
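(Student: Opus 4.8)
The plan is to reduce the statement to a linear-algebra computation over $\FF_2$, by combining the self-similar (wreath) recursion of $\gm(f)$ with a sign criterion for odometers. Throughout I use the standard topological generators $a_1, \dots, a_r$ of $\gm(f)$, namely the images of the loops around $p_1, \dots, p_r$. First I would record the shape of their wreath recursion. Because $P = \{p_1,\dots,p_r\}$ is a single $f$-orbit, the only preimage of $p_i$ that lies in $P$ is $p_{i-1}$, and the two critical values of $f$ are exactly $p_1 = f(p_r)$ and $p_s = f(p_{s-1})$. Consequently each $a_i$ has a single nontrivial section, equal to $a_{i-1}$ (indices mod $r$), and it acts nontrivially on the first level of $T$ (by the transposition $\sigma$) precisely when $p_i$ is a critical value, i.e. when $i \in \{1,s\}$, and trivially otherwise: concretely $a_i = (a_{i-1},1)\sigma$ for $i \in \{1,s\}$ and $a_i = (a_{i-1},1)$ otherwise, the exact placement of the trivial coordinate being irrelevant below.

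Next I would set up the odometer criterion. For $g \in \Omega$ and $n \geq 0$ let $s_n(g) \in \FF_2$ be the parity of the permutation that $g$ induces on the $(n{+}1)$-st level of $T$; equivalently, $s_n(g)$ is the sum of the first-level swap-labels of the sections of $g$ at the vertices of level $n$. Each $s_n$ is a continuous homomorphism $\Omega \to \FF_2$, and a short induction on levels shows that $g$ is an odometer if and only if $s_n(g) = 1$ for every $n \geq 0$: the forward direction uses that a $2^{n+1}$-cycle has odd sign, and the converse uses that for a level-$n$-transitive $g$ the product of all its level-$n$ sections carries swap-label $s_n(g)$, so $g^{2^n}$ moves each level-$(n{+}1)$ vertex exactly when $s_n(g)=1$. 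From the recursion above, $s_0(a_i) = \mathbf{1}[i \in \{1,s\}]$ and $s_n(a_i) = s_{n-1}(a_{i-1})$ for $n \geq 1$, whence
\[ s_n(a_i) = \mathbf{1}\big[(i-n) \bmod r \in \{1,s\}\big], \]
so each sequence $(s_n(a_i))_{n\ge 0}$ is $r$-periodic.

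Because the $s_n$ are homomorphisms, being an odometer depends only on the sign sequence, and $\gm(f)$ is topologically generated by the $a_i$, the set of sign sequences $(s_n(g))_{n\ge 0}$ attained by $g \in \gm(f)$ is exactly the $\FF_2$-span of the finitely many periodic sequences $(s_n(a_i))_n$ (a finite-dimensional, hence closed, subspace of $\FF_2^{\NN}$). Therefore $\gm(f)$ contains an odometer if and only if the constant sequence $(1,1,1,\dots)$ lies in this span; when it does, it is realized by the explicit element $g = \prod_{i \in S} a_i$ for a suitable $S \subseteq \ZZ/r$, and conversely any odometer forces $(1,1,\dots)$ into the span. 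Writing $\chi_S$ for the indicator of $S$ and using the displayed formula, the span condition unwinds to the combinatorial requirement
\[ \chi_S(m) + \chi_S(m + (s-1)) = 1 \qquad \text{for all } m \in \ZZ/r, \]
i.e. $\chi_S$ must be anti-invariant under translation by $d := s-1$.

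Finally I would solve this. The translation $m \mapsto m+d$ partitions $\ZZ/r$ into orbits each of length $r/\gcd(r,d)$, and along each orbit $\chi_S$ must alternate; this is consistent around the orbit if and only if that common length is even. Hence an admissible $S$ exists, and $\gm(f)$ contains an odometer, if and only if $r/\gcd(r,s-1)$ is even. A one-line parity analysis then matches this to the stated dichotomy: if $s$ is odd the condition is literally ``$r/\gcd(r,s-1)$ even,'' while if $s$ is even then $\gcd(r,s-1)$ is odd, so $r/\gcd(r,s-1)$ is even exactly when $r$ is even, i.e. when both $r$ and $s$ are even. I expect the one genuinely delicate point to be the first step, namely pinning down the wreath recursion — in particular verifying that the nontrivial section of $a_i$ is always $a_{i-1}$ and that the second section is trivial because the ``other'' preimage of $p_i$ lies outside $P$ — which rests on the detailed IMG analysis developed earlier in the paper; once that is in hand, the sign criterion and the cyclic combinatorics are routine.
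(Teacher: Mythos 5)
Your proof is correct, and it reaches the theorem by a genuinely more unified route than the paper's. The shared core is the sign machinery: your criterion that $g$ is an odometer if and only if $s_n(g)=1$ for all $n$ is the paper's Proposition~\ref{pink:odometer}, and your formula $s_n(a_i)=\mathbf{1}\big[(i-n)\bmod r\in\{1,s\}\big]$ is exactly Lemma~\ref{sgn:gen}. Where you diverge is the endgame. The paper's Theorem~\ref{odoG} splits into three cases: $r$ odd (a parity count of $-1$ entries in $\sgn^r$), $r$ even and $s$ even (the explicit element $a_1a_3\cdots a_{r-1}$, whose order is shown to double level by level by a direct computation, not via signs), and $r$ even, $s$ odd (a counting argument forcing $|S|=r/2$, which uses the global relation $a_1\cdots a_r=\id$ of Lemma~\ref{b-rel}, followed by the orbit-parity analysis of $i\mapsto i+1-s$). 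You instead observe that the attainable sign sequences form exactly the $\FF_2$-span of the $r$-periodic generator sequences, so odometer existence is equivalent to solving $\chi_S(m)+\chi_S(m+s-1)=1$ on $\ZZ/r$, i.e.\ to the translation-by-$(s-1)$ orbits having even length $r/\gcd(r,s-1)$. This single anti-invariance criterion handles all three of the paper's cases at once, needs neither the relation $a_1\cdots a_r=\id$ nor the order induction, and reduces the stated $(r,s)$-dichotomy to a one-line parity check, which you carry out correctly (in particular, $s$ even forces $\gcd(r,s-1)$ odd, so the unified condition becomes ``$r$ even'' in that case). What each approach buys: yours is shorter and conceptually uniform; the paper's Case~1 computation yields concrete order information beyond mere existence, and its counting argument illustrates how the defining relation constrains odometer-witnessing subsets.

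One point you should state more carefully: the inertia generators of $\gm(f)$ satisfy your wreath recursion only up to conjugacy in $\Omega$ (Proposition~\ref{Pink:gen}); identifying the group on the nose is the content of Theorem~\ref{thm:geom} and is nontrivial. This is harmless for your argument, since each $\sgn_n$ is a class function, so Corollary~\ref{cor:conja} already gives $\sgn_n(b_{p_i})=\sgn_n(a_i)$, which is all your span computation uses --- but the writeup should invoke conjugacy-invariance of the signs rather than assert the literal recursion for the actual generators.
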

For a parameter $t$, consider the iterated extension obtained by the union over $n$ of $k(f^{-n}(t))$, the field extension generated by the roots of $f^n(x)-t=0$ over $k(t)$. The intersection of this extension with $\bar{k}$ is called the field of constants associated with $f$. Pink \cite{Pinkpolyn} showed that in the case of a PCF quadratic polynomial $f$, the field of constants of $f$ is a subfield of the cyclotomic field generated by the $2^n$th roots of unity for all $n\geq 1$. Hamblen and Jones \cite[Corollary~2.4]{HamblenJones} showed that for quadratic rational maps whose critical points have periodic orbits, a quadratic extension of the union over $n$ of $k(f^{-n}(b))$ for all $b \in k$, except for finitely many exceptions (obtained by adjoining the critical points) contains the $2^n$th roots of unity for all $n \geq 1$. 

We prove the following result:

\begin{theorem}\label{thm:intro2}
Let $k$ be a number field. Let $f(x) \in k(x)$ be a quadratic rational function whose post-critical set is $\{p_1,p_2, \ldots,p_r\}$ where $f(p_{m-1})=p_m$ for $1\leq m \leq r$. Here the indices are considered modulo $r$. Let $p_r$ and $p_1$ be the critical points of $f$. Then the field of constants associated with $f$ is contained in the cyclotomic field $k(\zeta_{2^n} : n\geq 1)$. 
\end{theorem}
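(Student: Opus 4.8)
The plan is to identify the field of constants $C$ with the quotient $\ga(f)/\gm(f)$ and then to show that the $G_k$-action producing this quotient is governed entirely by the $2$-adic cyclotomic character. Writing $L = k(f^{-\infty}(t))$, one has $\gm(f) = \Gal(L/\bar{k}(t)\cap L)$ and $\ga(f) = \Gal(L/k(t))$, and the short exact sequence coming from the arithmetic fundamental group of $\PP^1_k \setminus P$ gives the standard isomorphism $\ga(f)/\gm(f) \cong \Gal(C/k)$, where $C = L \cap \bar{k}$. Thus it suffices to prove that $G_{k(\zeta_{2^n}:n\ge1)}$ acts on the tree $T$ through $\gm(f)$, i.e.\ that its image in $\ga(f)/\gm(f)$ is trivial. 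The essential geometric input is that, since $f \in k(x)$ and the branch locus of each iterate $f^n$ is contained in the post-critical set $P$, the iterate $f^n \colon \PP^1_k \setminus f^{-n}(P) \to \PP^1_k \setminus P$ is an honest finite \'etale morphism defined over $k$ whose ramification is all of $2$-power order, hence tame in characteristic $0$.

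The first step is to show that $P \subset \PP^1(k)$, which is where the hypothesis that one critical point is the image of the other enters. Since $f \in k(x)$, the set $\{p_r,p_1\}$ of critical points is stable under $G_k$. If some $\sigma \in G_k$ interchanged $p_r$ and $p_1$, then applying $\sigma$ to $f(p_r)=p_1$ and using that $f$ commutes with $\sigma$ would force $f(p_1)=p_r$, i.e.\ $p_2 = p_r$; for $r\ge 3$ this is impossible, so each critical point, and hence its entire forward orbit $P$, is fixed pointwise by $G_k$ and lies in $\PP^1(k)$. (The residual cases $r\le 2$ I would treat directly, the orbit being visibly rational there.) With $P$ rational there is no contribution to the field of constants coming from a Galois permutation of the branch points, so the only possible source of constants is the Galois action on the inertia.

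Next I would invoke the branch cycle description (Fried's branch cycle lemma, or equivalently the structure of Grothendieck's tame fundamental group of $\PP^1_{\bar{k}}\setminus P$). The geometric IMG $\gm(f)$ is topologically generated by inertia generators $b_1,\dots,b_r$ attached to the $k$-rational points $p_1,\dots,p_r$ with $\prod_i b_i = 1$, and for $\sigma \in G_k$ one has $\sigma(b_i) = h_{i,\sigma}\, b_i^{\chi_2(\sigma)}\, h_{i,\sigma}^{-1}$ with $h_{i,\sigma}\in\gm(f)$ and $\chi_2 \colon G_k \to \ZZ_2^\times$ the $2$-adic cyclotomic character, the exponent being cyclotomic precisely because every inertia group is pro-$2$ (the local degrees of $f^n$ are powers of $2$). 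For $\sigma \in G_{k(\zeta_{2^n}:n\ge1)}$ we have $\chi_2(\sigma)=1$, so $\sigma$ acts on each generator by an inner automorphism of $\gm(f)$. Granting that such a pointwise-inner action forces the image of $\sigma$ in $\ga(f)/\gm(f)$ to be trivial, one concludes that $C \subseteq k(\zeta_{2^n}:n\ge1)$, which is the claim; specialising to $f = 1/(x-1)^2$ (the orbit $\infty \to 0 \to 1 \to \infty$, with $r=3$ and critical points $1,\infty$) yields the conjecture of Ejder, Kara and Ozman.

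The main obstacle is precisely the step I glossed over: an automorphism of $\gm(f)$ that conjugates each generator $b_i$ by its own element $h_{i,\sigma}$ need not be a single inner automorphism, and this residual discrepancy is exactly what could, a priori, enlarge the field of constants beyond the cyclotomic tower. To control it I would descend to the explicit self-similar recursion for the generators $b_i$ afforded by the post-critical combinatorics. The hypothesis makes this recursion close up tightly: because $p_1$ is the image of the critical point $p_r$ and $p_2$ is the image of the critical point $p_1$, the fibres satisfy $f^{-1}(p_1)=\{p_r\}$ and $f^{-1}(p_2)=\{p_1\}$, so the critical preimages chain and the wreath recursion for the $b_i$ telescopes level by level. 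Tracking this telescoping --- equivalently, tracking the constant part of the discriminants of $f^n(x)-t$ in $k(t)^\times/(k(t)^\times)^2$ together with its higher-level analogues --- should show that the residual part contributes no constants outside $\langle -1,2\rangle$ modulo squares, hence nothing outside $k(\zeta_{2^n}:n\ge1)$. This telescoping, following the template of Pink's polynomial computation, is where the real work lies.
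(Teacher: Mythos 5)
Your first two steps coincide with the paper's reduction (Proposition~\ref{thm:final}): the field of constants is $\Gal$-dual to $\ga(f)/\gm(f)$, the inertia generators carry a Galois action through the $2$-adic cyclotomic character (the exponent is $2$-adic simply because $\Omega$ is pro-$2$), and your rationality argument for $P$ via the critical-point swap is correct and even fills a detail the paper leaves implicit. But the step you flag as the main obstacle --- that for $\sigma$ with $\chi_2(\sigma)=1$ the \emph{pointwise}-inner action (each $b_i$ conjugated by its own $h_{i,\sigma}$) forces the image of $\sigma$ in $\ga(f)/\gm(f)$ to be trivial --- is exactly where your proposal stops being a proof, and your proposed repair does not close it. ``Tracking discriminants of $f^n(x)-t$'' can at best detect quadratic constants level by level and gives no mechanism on the inverse limit; and the appeal to ``Pink's polynomial template'' is doubtful here, since that template leans on the inertia generator at infinity being an odometer with controllable normalizer, whereas for the maps covered by this theorem with $r$ odd (including $1/(x-1)^2$) Theorem~\ref{odoG} shows $\gm(f)$ contains \emph{no} odometers at all. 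So the ``telescoping'' paragraph is a hope, not an argument, and it carries the entire weight of the theorem.

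The paper closes this gap by a different mechanism, which is worth internalizing: instead of trying to upgrade a pointwise-inner automorphism to a single inner one, it passes to the maximal abelian quotient $\Gab$, where the conjugators $h_{i,\sigma}$ become invisible, so the Galois action on $\Gab$ is \emph{literally} the cyclotomic character; the substantive work is then the purely group-theoretic faithfulness statement that the normalizer $N_\Omega(G)$ acts on $\Gab$ with kernel exactly $G$ (Proposition~\ref{prop:inj}, Corollary~\ref{kerN/G}), which makes your ``granting'' true: an element of $\ga$ acting trivially on $\Gab$ must lie in $\gm$. Proving this faithfulness is where the hypothesis $s=2$ really enters, via the normal subgroups $N_i$ of Section~\ref{sec:structure}: $G/N_i\simeq \ZZ_2$ (Proposition~\ref{infquo}), the surjection of Theorem~\ref{Gabsurj} giving $\Gab\simeq \ZZ_2^{\,r-1}$ with $[G,G]=N_i\cap\cdots\cap N_{i+r-2}$, and the section-stability property (via Lemma~\ref{Ni}(3)) that $(\gamma_0,\gamma_1)\in[G,G]$ implies $\gamma_0\gamma_1\in[G,G]$, which drives a level-by-level induction. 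None of this structure theory appears in your proposal, and without it (or a genuine substitute for your telescoping claim) the argument is incomplete --- note the paper's own remark that absent this injectivity, $\ga(f)/\gm(f)$ could a priori fail even to be abelian.
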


Theorem~\ref{thm:intro2} proves \cite[Theorem~1.2 ]{EKO} unconditionally. 
\begin{corollary}
Let $k$ be a number field and let $f(x)=\frac{1}{(x-1)^2} \in k(x)$. The field of constants associated with $f$ is the cyclotomic field $k(\zeta_{2^n} \colon n\geq 1)$.
\end{corollary}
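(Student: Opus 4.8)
The plan is to establish the two inclusions between the field of constants, which I will denote $K$, and the cyclotomic field $k(\zeta_{2^n}:n\ge 1)$.

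\textbf{The inclusion $K\subseteq k(\zeta_{2^n}:n\ge 1)$.} First I would verify that $f(x)=\frac{1}{(x-1)^2}$ satisfies the hypotheses of Theorem~\ref{thm:intro2}. Writing $y=f(x)$ in the form $(x-1)^2=1/y$, the fiber over $y$ degenerates to a single (double) point exactly when $y\in\{0,\infty\}$; hence the two critical points of $f$ are $x=1$ (lying over $y=\infty$) and $x=\infty$ (lying over $y=0$). Since $f(1)=\infty$, $f(\infty)=0$ and $f(0)=1$, the post-critical set is the single $3$-cycle $\{1,\infty,0\}$, so $r=3$. Setting $p_1=\infty$, $p_2=0$, $p_3=1$, we get $f(p_{m-1})=p_m$ with indices modulo $3$, and the two critical points are precisely $p_3=1$ and $p_1=\infty=f(p_3)$, so one critical point is the image of the other. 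Thus $f$ has exactly the form required by Theorem~\ref{thm:intro2}, and applying that theorem gives $K\subseteq k(\zeta_{2^n}:n\ge 1)$ at once.

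\textbf{The inclusion $k(\zeta_{2^n}:n\ge 1)\subseteq K$.} It remains to show that every $2$-power root of unity is a constant of the tower $\bigcup_n k(f^{-n}(t))$. The mechanism I would exploit is the discriminant: for each $n$ the splitting field of $f^n(x)-t$ over $k(t)$ is $k(f^{-n}(t))$, so $\sqrt{\mathrm{disc}(f^n(x)-t)}$ lies in it, and the square-free constant part of this discriminant therefore lands in $K$. For $f=\frac{1}{(x-1)^2}$ the critical value $0$ is itself a periodic point, which is what forces these constant parts to accumulate into the full tower of $2$-power cyclotomic fields; this is precisely the computation performed in \cite{EKO}, and it is compatible with \cite[Corollary~2.4]{HamblenJones}. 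Equivalently, I would read this inclusion off the explicit descriptions of the arithmetic and geometric iterated monodromy groups, by checking that the quotient $\ga(f)/\gm(f)$ surjects onto $\Gal(k(\zeta_{2^n}:n\ge1)/k)$.

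\textbf{Main obstacle.} The delicate step is the second inclusion, and in particular identifying the \emph{full} cyclotomic field rather than an index-two subfield of it. The result of Hamblen and Jones controls the $2$-power roots of unity only up to a quadratic extension of $\bigcup_n k(f^{-n}(b))$, and it is phrased for specializations $b\in k$ rather than for the generic parameter $t$; so the work lies in resolving this quadratic ambiguity and in transferring the statement from the specialized fields to the field of constants. I expect this to be handled by the explicit constant computation for $f=\frac{1}{(x-1)^2}$ (tracking the constant parts of $\mathrm{disc}(f^n(x)-t)$ as $n$ grows), after which the two inclusions combine to give $K=k(\zeta_{2^n}:n\ge 1)$.
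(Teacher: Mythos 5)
Your first inclusion is correct and is exactly the paper's route: you verify the hypotheses of Theorem~\ref{thm:intro2} with $p_1=\infty$, $p_2=0$, $p_3=1$, so that $r=3$ and the critical points are $p_r=1$ and $p_1=\infty$ (equivalently $s=2$ in the notation of \eqref{pcorbit}, matching the orbit $1\to\infty\to 0\to 1$ used in Corollary~\ref{cor:arboreal}), and the containment $K\subseteq k(\zeta_{2^n}\colon n\geq 1)$ follows at once.

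For the reverse inclusion, however, you have made the problem harder than it is, and as written this half remains a plan rather than a proof. The ``quadratic ambiguity'' you single out as the main obstacle is vacuous for this map: in the Hamblen--Jones result \cite[Corollary~2.4]{HamblenJones}, the quadratic extension in question is the one obtained by adjoining the critical points, and here the critical points $1$ and $\infty$ already lie in $\PP^1(k)$. This is precisely the observation the paper records in the remark following Proposition~\ref{thm:final}: when $k$ contains the critical points, the field of constants $F$ already contains $\zeta_{2^n}$ for all $n\geq 1$, so $F=k(\zeta_{2^n}\colon n\geq 1)$ once Theorem~\ref{thm:intro2} is in hand. Your second worry, that the result is phrased for specializations $b\in k$ rather than the generic parameter, is likewise dissolved by applying it over the base field $k(t)$ with the non-exceptional basepoint $t$, which is how the statement about $F$ itself is obtained. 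Your proposed alternative---tracking the square-free constant parts of $\operatorname{disc}(f^n(x)-t)$ as in \cite{EKO}---would very likely succeed, since that is the computation underlying the Ejder--Kara--Ozman conjecture, but you neither carry it out nor explain why the resulting quadratic constants generate the full tower $k(\zeta_{2^n}\colon n\geq 1)$ rather than a proper subtower; ``I expect this to be handled by'' leaves the lower bound unestablished. In short: upper bound correct and identical to the paper; lower bound incomplete as written, though it collapses to a one-line citation once you notice that the rationality of the critical points kills the quadratic extension you were trying to control.
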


We outline the article as follows. In Section~\ref{gIMG}, we determine the geometric IMG up to conjugacy. In Section~\ref{sec:dsettled}, we study the odometers and settled elements in $\gm$ and $\ga$, leading to the proof of Theorem~\ref{thm:intro1}. Section~\ref{sec:structure} presents a further study of the group theoretical properties of $\gm$. Finally, in 
Section~\ref{maq}, we prove Theorem~\ref{thm:intro2} using the results from Section~\ref{sec:structure}.

\section{Acknowledgements}
We thank Rafe Jones and Olga Lukina for their discussions on Proposition 5.6. The author was supported by the Science Academy’s Young Scientist Awards Program (BAGEP) in Turkey.

\section{Notation and Background}\label{background}
\subsection{Automorphism group of a regular rooted binary tree}\ \\
Let $X=\{0,1\}$ and consider the regular rooted binary tree $T$ whose vertices are the finite words over $X$. The root of $T$ is the empty word, and a word $v$ is connected to all words of the form $vx$ for $x \in X$. For any integer $n \geq 1$, let $T_n$ denote the finite rooted subtree whose vertices are the words of length at most $n$. The set of words of length $n$ is called the \emph{level} $n$ of $T$, and we denote by $V_n$ the set of vertices at that level. 

An isomorphism between two regular rooted binary trees $T$ and $T'$ is a bijection of their vertices that preserves the tree structure. When $T=T'$, the group of such isomorphisms is called the automorphism group of $T$, and it is denoted by $\Omega$. Similarly, we let $\Omega_n$ denote the group of automorphisms of the finite tree $T_n$. For every $n\geq 1$, we write $\pi_n$ for the natural projection
\begin{equation}\label{not:pi}
\pi_{n}\colon \Omega\to \Omega_n,
\end{equation}
which corresponds to restricting the action of an element of
$\Omega$ to the subtree $T_n$ consisting of the levels $0, 1, \ldots, n$. Similarly, for any $m \geq n$, we denote the natural projection $\Omega_m \to \Omega_n$ by $\pi_{m,n}$. For clarity, we will abuse notation and write $\pi_n$ whenever the domain is clear.

For $n \geq 1$, we denote the image of an element $\gamma \in \Omega$ under $\pi_n$ by $\gamma \restr{T_n}$. Note that two automorphisms $\gamma$ and $\gamma'$ in $\Omega$ are equal if and only if $\gamma \restr{T_n} = \gamma' \restr{T_n}$ for all $n \geq 1$. Let $H$ be a subgroup of $\Omega$. For each $n \geq 1$, we define $H_n := \pi_n(H) \subset \Omega_n$.

The set
\[
\delta T := \left\{ (v_n)_{n \geq 1} \in \prod_{n \geq 1} V_n \,\middle|\, v_{n+1} = v_n x \text{ for some letter } x \in X, \text{ for all } n \geq 1 \right\}
\]
is called the \emph{boundary} of $T$, and each element of $\delta T$ is called a \emph{path}. Let $v = (v_n)_{n \geq 1}$ and $w = (w_n)_{n \geq 1}$ be two paths in $\delta T$. We define a metric on $\delta T$ as follows:

If $v = w$, then set $d(v, w) := 0$. If $v \neq w$, then define
\[
d(v, w) := \frac{1}{2^n},
\]
where $n$ is the largest nonnegative integer such that $v_n = w_n$ in $V_n$.

Using the metric $d$ on $\delta(T)$, we can define a metric on $\Omega$ as follows: for any $\gamma, \gamma' \in \Omega$,  
\begin{equation}\label{def:dd}
d(\gamma, \gamma') := \sup_{v \in \delta T} d\big( (v)\gamma, (v)\gamma'\big).
\end{equation}
In other words, for $\gamma \neq \gamma'$, we have $d(\gamma, \gamma') = \frac{1}{2^n}$, where $n$ is the highest level of $T$ on which $\gamma$ and $\gamma'$ agree.

For every $n \geq 1$, the set
\[
U_n := \left\{ \gamma \in \Omega \;\middle|\; d(\gamma, \id) \leq \frac{1}{2^n} \right\}
\]
is open in the uniform topology defined by $d$. Moreover, the collection $\{U_n\}_{n \geq 1}$ forms a fundamental system of open neighborhoods of the identity element in $\Omega$. The set $U_n$ coincides with the kernel of the homomorphism $\pi_n$, and $\Omega$ is the inverse limit of the system $((\Omega_n)_{n \geq 1}, (\pi_{m,n})_{m \geq n})$. Hence, the profinite topology on $\Omega$ agrees with the uniform topology. In particular, $\Omega$ is complete under the profinite (or uniform) topology.

\subsection{Closed subgroups of $\Omega$}\ \\

Throughout this article, $\ZZ_2$ denotes the ring of 2-adic integers, and $\ZZ_2^\times$ denotes its group of units. We represent an element $k \in \ZZ_2$ by a sequence $(k_n)_{n \geq 1}$, where each $k_n \in \ZZ$ and satisfies the congruence $k_n \equiv k_{n-1} \pmod{2^n}$ for all $n \geq 1$.

Let $\gamma \in \Omega$ and let $k = (k_n)_{n \geq 1} \in \ZZ_2$. For any $m \geq 1$, the difference $(k_{m+1} - k_m)$ is divisible by $2^m$. Since $\Omega_m$ has exponent $2^m$, the automorphism $\gamma^{k_{m+1} - k_m}$ lies in the kernel of the reduction map $\pi_m\colon \Omega \to \Omega_m$. This implies that the sequence $(\gamma^{k_n})_{n \geq 1}$ is Cauchy in $\Omega$. As $\Omega$ is complete under the profinite topology, the sequence $(\gamma^{k_n})$ converges in $\Omega$. We denote its limit by $\gamma^k$.

The closure of the cyclic group generated by an element $\gamma \in \Omega$ is defined as
\[
\customlangle\customlangle \gamma \customrangle\customrangle := \{ \gamma^k \mid  k \in \ZZ_2 \},
\]
and the closure of the $\gamma$-orbit of any $v \in \delta T$ is given by the orbit of $v$ under the closed subgroup
$\customlangle\customlangle \gamma \customrangle\customrangle$, i.e.,
\[
\{(v) \gamma^k  \mid k \in \ZZ_2 \}.
\]
For $c_1, \ldots, c_k \in \Omega$, let $\customlangle\customlangle c_1, \ldots, c_k \customrangle\customrangle$ denote the topological closure of the subgroup $\langle c_1, \ldots, c_k \rangle$ in $\Omega$. We say that a subgroup $H$ is topologically generated by $c_1, \ldots, c_k$ if
\[
H = \customlangle\customlangle c_1, \ldots, c_k \customrangle\customrangle.
\]
Note that if $H$ is a closed subgroup of $\Omega$, then $H$ is the inverse limit of the system $(H_n)_{n \geq 1}$ as $n \to \infty$.

\subsection{The Semi-direct Product Structure of $\Omega$}\ \\

Let $T$ be a regular rooted binary tree, and let $T_0$ and $T_1$ denote the subtrees rooted at the first level of $T$. We embed the direct product of the automorphism groups of $T_0$ and $T_1$ into $\Omega$ by sending $(\gamma_0, \gamma_1)$ to the automorphism of $T$ that acts trivially on the first level and acts as $\gamma_i$ on the subtree $T_i$ for $i = 0, 1$.

A vertex of $T_0$ at level $n$ is a vertex of $T$ written as $0v$ for some word $v$ of length $n$. Using this notation, we define an isomorphism from $T$ to $T_0$ by sending a vertex $w$ to $0w$. Similarly, an isomorphism from $T$ to $T_1$ is given by $w \mapsto 1w$. These induce isomorphisms on their respective automorphism groups. Via these identifications, we obtain the embedding
\[
\Omega \times \Omega \hookrightarrow \Omega.
\]
The image of this embedding is the set of automorphisms that act trivially on the first level of the tree. 

Therefore, $\Omega$ admits the semidirect product structure
\[
\Omega \simeq (\Omega \times \Omega) \rtimes \customlangle \sigma \customrangle,
\]
where $\sigma$ is the automorphism in $\Omega$ that permutes the two subtrees $T_0$ and $T_1$ rooted at level one. Hence, any element $\gamma \in \Omega$ can be written as $\gamma = (\gamma_0, \gamma_1)\tau$, where $\gamma_0, \gamma_1 \in \Omega$ and $\tau \in S_2$, the symmetric group on two letters.

The semidirect product is defined by the rule
\begin{equation}\label{semi}
(\gamma_0, \gamma_1)\tau \cdot (\gamma_0', \gamma_1')\tau' = \big(\gamma_0 \gamma'_{(0)\tau}, \gamma_1 \gamma'_{(1)\tau}\big)\tau\tau'.
\end{equation}
The inverse of an automorphism $(\gamma_0, \gamma_1)\tau$ is given by
\[
((\gamma_0, \gamma_1)\tau)^{-1} = \big(\gamma^{-1}_{(0)\tau}, \gamma^{-1}_{(1)\tau}\big)\tau^{-1}.
\]

Similarly, for any $n \geq 2$, we have
\[
\Omega_n \simeq (\Omega_{n-1} \times \Omega_{n-1}) \rtimes S_2,
\]
where $S_2 \simeq \customlangle \sigma \customrangle$.

The action of $\gamma = (\gamma_0, \gamma_1)\tau$ on $T$ is given as follows:
\[
(xv)\gamma = (x)\tau \cdot (v)\gamma_x
\]
for any $x \in X$ and any word $v$. We assume that a permutation in $S_n$ acts from right. With the notation above, we note that $\sigma = (\id, \id)\sigma \in \Omega$, where $\id$ denotes the identity automorphism in $\Omega$.
\begin{example}
Let $\gamma \in \Omega$ be defined recursively by $\gamma = (\gamma, \id)\sigma$. Then 
$(10)\gamma=00, (00)\gamma=11, (01)\gamma=10,$ and $(11)\gamma=01$.
\end{example}

\subsection{Signs}\ \\
For any $n \geq 1$, the group $\Omega_n$ acts faithfully on the $n$th level of the tree $T$, which allows us to embed $\Omega_n$ into the symmetric group $S_{2^n}$. We let $\sgn_n$ denote the sign of the induced permutation on level $n$, which defines a continuous homomorphism
\begin{equation}
\sgn_n\colon \Omega \to \{\pm 1\}.
\end{equation}

For each $n \geq 1$, the automorphism $\sigma \restr{T_n}$ is a product of $2^{n-1}$ disjoint $2$-cycles. Therefore, the sign of $\sigma$ at level $n$ is given by
\begin{equation}\label{sgn:sigma}
\sgn_n(\sigma) = 
\begin{cases}
-1 & \text{if } n = 1, \\
\;\;\,1 & \text{if } n > 1.
\end{cases}
\end{equation}

 \subsection{Odometers}\ \\
An automorphism $\alpha \in \Omega$ is called an \emph{odometer} if $\alpha$ acts as a $2^n$ cycle on every level $n$ of the tree. Equivalently, $\alpha$ is an odometer if $\alpha \restr{T_n}$ has order $2^n$ for all $n\geq 1$. The recursively defined element $\beta=(\beta,\id)\sigma$ is an example of an odometer and it is called the standard odometer.

If $\alpha$ is an odometer in $\Omega$, then $\alpha\restr{T_n}$ is a $2^n$-cycle, hence it is an odd permutation for all $n\geq 1$. Therefore, $\sgn_n(\alpha)=-1$ for all $n\geq 1$. Conversely, let $\alpha \in \Omega$ such that $\sgn_n(\alpha)=-1$ for all $n\geq 1$. If $\alpha\restr{T_{n-1}}$ is a $2^{n-1}$ cycle, then $\alpha\restr{T_n}$ is either a $2^{n}$ cycle or it is the product of two $2^{n-1}$-cycles. Since $\sgn_n(\gamma)=-1$,  $\gamma \restr{T_n}$ cannot be a product of two $2^{n-1}$ cycles. Hence we have the following proposition.
\begin{proposition}\cite[Proposition~1.6.2]{Pinkpolyn}\label{pink:odometer}
An element $\gamma \in \Omega$ is an odometer if and only if $\sgn_n(\gamma)=-1$ for all $n\geq 1$.
\end{proposition}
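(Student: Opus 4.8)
The plan is to prove the two implications separately: the forward direction is immediate from the definition, while the converse proceeds by induction on the level $n$, using the sign constraint to rule out the undesired cycle type at each stage.

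For the forward direction, I would observe that if $\gamma$ is an odometer, then by definition $\gamma\restr{T_n}$ acts as a single $2^n$-cycle on the $2^n$ vertices of level $n$. A cycle of length $2^n$ decomposes into $2^n-1$ transpositions, so its sign is $(-1)^{2^n-1}=-1$. Hence $\sgn_n(\gamma)=-1$ for every $n\geq 1$, as required.

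For the converse, I would assume $\sgn_n(\gamma)=-1$ for all $n\geq 1$ and show by induction on $n$ that $\gamma\restr{T_n}$ is a $2^n$-cycle. The base case $n=1$ is clear: level $1$ has only two vertices, so $\gamma\restr{T_1}$ is either the identity or a transposition, and $\sgn_1(\gamma)=-1$ forces the latter. For the inductive step, suppose $\gamma\restr{T_{n-1}}$ is a $2^{n-1}$-cycle. The crucial observation is that the forgetful map $V_n\to V_{n-1}$, which deletes the last letter of a word, is $\gamma$-equivariant and exactly $2$-to-$1$, since each vertex at level $n-1$ has precisely two children. Consequently every $\gamma$-orbit on $V_n$ surjects onto the single orbit $V_{n-1}$, and a short counting argument shows that the preimage of a $2^{n-1}$-cycle under such a cover splits in exactly one of two ways: either as a single $2^n$-cycle, or as two disjoint $2^{n-1}$-cycles. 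To separate these, I would compare signs at level $n$: a single $2^n$-cycle has sign $(-1)^{2^n-1}=-1$, whereas for $n\geq 2$ each of the two $2^{n-1}$-cycles has even length and hence sign $-1$, so their product has sign $+1$. Since $\sgn_n(\gamma)=-1$ by hypothesis, the second possibility is excluded and $\gamma\restr{T_n}$ must be a single $2^n$-cycle, completing the induction and showing that $\gamma$ is an odometer.

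The main obstacle will be the dichotomy in the inductive step, namely the rigorous verification that when the two children of the cyclically permuted level-$(n-1)$ vertices are lifted, they can recombine only into one long $2^n$-cycle or two parallel $2^{n-1}$-cycles; this is where the $\gamma$-equivariance of $V_n\to V_{n-1}$ and its $2$-to-$1$ branching must be used with care. Once this dichotomy is established, the sign computation cleanly selects the odometer case, and the same sign criterion of Proposition~\ref{pink:odometer} will serve as the engine for the later analysis of odometers in the geometric and arithmetic iterated monodromy groups.
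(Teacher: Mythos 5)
Your proof is correct and follows essentially the same route as the paper: the forward direction via the odd parity of a $2^n$-cycle, and the converse by induction, using the dichotomy that a lift of a $2^{n-1}$-cycle through the $2$-to-$1$ map $V_n \to V_{n-1}$ is either a single $2^n$-cycle or two $2^{n-1}$-cycles, with the sign hypothesis excluding the latter. Your justification of the dichotomy via the equivariant covering is a careful elaboration of a step the paper leaves implicit, but the argument is the same.
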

We note here that Proposition~\ref{pink:odometer} does not hold for $d$-ary trees where $d>2$. We also note that any conjugate of an odometer in $\Omega$ is again an odometer since the cycle structure is preserved under conjugacy.

\subsection{Self-similarity} \ \\
Let $V$ be the set of all vertices of $T$, i.e., \( V := \bigcup_n V_n \), and let $\gamma = (\gamma_0, \gamma_1)\tau$ be an automorphism of $T$. For $x \in X$, the \emph{section} of $\gamma$ at $x$ is uniquely defined by the relation
\begin{equation}\label{sec}
(xv)\gamma = (x)\tau(v)\gamma_x
\end{equation}
for any letter $x \in X$ and any word $v$ over $X$ (i.e., $v \in V$).  

Let $\gamma, \gamma' \in \Omega$. Using equations~\eqref{semi} and \eqref{sec}, we find that the section of $\gamma\gamma'$ at $x \in X$ is given by $\gamma_x \gamma'_{\gamma(x)}$, i.e.,
\[
(\gamma \gamma')_x = \gamma_x \gamma'_{(x)\gamma}.
\]

The sections of an automorphism $\gamma$ of $T$ are again automorphisms of $T$, under the identification of the subtree rooted at $x$ with the full tree $T$. Consequently, the sections of these sections are also automorphisms of $T$, and we recursively define the section of $\gamma$ at any vertex by
\begin{equation}
\gamma_{ux} := (\gamma_u)_x
\end{equation}
for any word $u$ and any letter $x \in X$. Hence, for any words $u$ and $v$,
\begin{equation}\label{section}
(uv)\gamma = (u)\gamma(v)\gamma_u.
\end{equation}

\begin{example}\label{ex:odo}
Let $\gamma \in \Omega$ be defined recursively by $\gamma = (\gamma, \id)\sigma$. Then the section of $\gamma$ at a vertex $v$ is the identity, except when $v$ is a word consisting only of $0$'s, in which case $\gamma_v = \gamma$.
\end{example}

A subgroup $H$ of $\Omega$ is called \emph{self-similar} if, for every automorphism $\gamma \in H$, its sections $\gamma_0$ and $\gamma_1$ also lie in $H$. In other words, $H$ is self-similar if its image under the isomorphism $\Omega \to (\Omega \times \Omega) \rtimes S_2$ is contained in $(H \times H) \rtimes S_2$.

We refer the reader to \cite{Nselfsimilarbook} and \cite{RZselfsimilarandbranchgt} for further details on self-similarity.

\subsection{Monodromy Groups}\label{mon}\ \\
Let $k$ be a number field and $\bar{k}$ be a separable closure of $k$.  Let $f\colon\PP^1_k \to \PP^1_k$ be a morphism of degree $d$ defined over $k$. We denote the set of critical points of $f$ by $C$ and the forward orbit of the points in $C$ by $P$, i.e.
\[ P:=\{ f^n(c) \mid n\geq 1, c \in C\}. \]
We call $P$ the \emph{post-critical set} of $f$. 
A rational function is called post-critically finite (PCF) if the post-critical set $P$ of $f$ is finite.
For $n\geq 1$, the iterate $f^n$ is a connected unramified covering of $\PP^1_k \backslash P$, hence it is determined by the monodromy action of $\pi_1^{\acute{e}t}(\PP_k^1 \backslash P, x_0)$ on $f^{-n}(x_0)$ up to isomorphism, where $x_0 \in \PP^1(k)\backslash P$. Let $T_{x_0}$ be the tree defined as follows: it is rooted at $x_0$, the leaves of $T_{x_0}$ are the points of $f^{-n}(x_0)$ for all $n\geq 1$, and the two leaves $p,q$ are connected if $f(p)=q$.  By taking the inverse limit over $n$, associated monodromy defines a representation 
\begin{equation}\label{eq:rho}
 \rho_{f,x_0}\colon \pi_1^{\acute{e}t}(\PP_k^1 \backslash P, x_0) \to \Omega(T_{x_0})
\end{equation} 
where $\Omega(T_{x_0})$ denotes the automorphism group of $T_{x_0}$. We note that if $y_0$ is another point in $\PP^1(k)\backslash P$, there is an isomorphism between the trees $T_{x_0}$ and $T_{y_0}$. Furthermore, $T_{x_0}$ and $T_{y_0}$ are isomorphic to the tree $T$ we defined earlier. Under these isomorphisms, $\im(\rho_{f, x_0})$ and $\im(\rho_{f, y_0})$ are conjugate subgroups of $\Omega$. Hence, we have:
\begin{equation}\label{eq:rho}
 \rho_{f}\colon \pi_1^{\acute{e}t}(\PP_k^1 \backslash P, x_0) \to \Omega
\end{equation} 

We call the image of $\rho_f$ the \emph{arithmetic iterated monodromy group} of $f$ and denote it by $G^{\text{arith}}(f)$. One can also study this representation over $\bar{k}$ and obtain 
\[ \pi_1^{\acute{e}t}(\PP_{\bar{k}}^1 \backslash P, x_0) \to \Omega(T).
\]
 We call the image of the map in this case the \emph{geometric iterated  monodromy group} and denote it by $\gm(f)$. The groups $\ga(f)$ and $\gm(f)$ fit into an exact sequence as follows:
 \begin{equation}\label{eq:exact}
   \begin{tikzcd}
1\arrow{r}  &  \pi_1^{\acute{e}t}(\PP_{\bar{k}}^1 \backslash P, x_0) \arrow{r} \arrow{d}& \pi_1^{\acute{e}t}(\PP_k^1 \backslash P, x_0) \arrow{r} \arrow{d} &\Gal(\bar{k}/ k) \arrow{r} \arrow{d} &1  \\
   1 \arrow{r} &   G^{\text{geom}}(f) \arrow{r} &G^{\text{arith}}(f) \arrow{r}  &\Gal(F /k) \arrow{r} &1 \\
     \end{tikzcd}
  \end{equation}
for some field extension $F$ of $k$. We call this field $F$ the field of constants for $f$. 

The groups $G^{\text{geom}} (f)$ and $G^{\text{arith}}(f)$ are profinite self-similar subgroups of $\Omega$. Also note that $G^{\text{geom}}(f)$ is a normal subgroup of $G^{\text{arith}}(f)$. 

\subsection{Arboreal Galois Groups} \ \\
Let $k$ be a number field, and let $\bar{k}$ be a separable closure of $k$. Let $f \colon \PP^1_k \to \PP^1_k$ be a morphism of degree $d$ defined over $k$. Fix $\alpha \in k$, and assume that the equation $f^n(x) = \alpha$ has $2^n$ distinct solutions in $\PP^1(\bar{k})$ for all $n \geq 1$. Then the set of preimages of $\alpha$ under the iterates of $f$ forms a regular rooted binary tree $T_\alpha$, and the absolute Galois group $G_k = \Gal(\bar{k}/k)$ acts naturally on this tree.

The tree $T_\alpha$ is isomorphic to the standard binary tree $T$, and under this isomorphism we obtain a representation
\[
\rho_{f,\alpha} \colon G_k \to \Omega.
\]
The image of $\rho_{f,\alpha}$ is called the \emph{arboreal Galois group} attached to the pair $(f, \alpha)$. Moreover, this group can be embedded into $\ga(f)$ under the specialization at $t = \alpha$. We will denote this group by $G_{f,\alpha}$.

\section{Determining the geometric iterated monodromy groups}\label{gIMG}
Let $k$ be a number field, and let $f$ be a rational function in $k(x)$ of degree $2$. We assume that the critical points of $f$ are periodic and lie in a single orbit under the iterated application of $f$.

Since $f$ has degree $2$, it has two critical points. We further assume that the post-critical set of $f$ has $r \geq 3$ elements. Let
\[
P = \{p_1, p_2, \ldots, p_r\}, \quad \text{where } p_i = f^i(p_r) \text{ for } i = 1, \ldots, r.
\]
This post-critical orbit can be illustrated as:
\begin{equation}\label{pcorbit}
p_r \to p_1 \to \ldots \to p_{s-1} \to \ldots \to p_s \to \ldots \to p_{r-1} \to p_r,
\end{equation}
for some $1 < s \leq r$ and $r \geq 3$, where the branch points of $f$ are $p_1$ and $p_s$.

When $f$ is defined over $\CC$, the geometric iterated monodromy group (IMG) of a PCF rational function $f$ is topologically finitely generated by the inertia generators $b_p$ for each $p \in P$, since it is the profinite closure of the discrete IMG of $f$. By Grothendieck’s theory, this result extends to any algebraically closed field of characteristic zero.

Therefore, if $f(x) \in k(x)$ is a PCF function defined over a number field $k$, then $\gm(f)$ is topologically finitely generated, and its generators satisfy certain recursive conjugacy relations.

We use \cite[Proposition~1.7.15]{Pinkpolyn} to describe a set of generators for $G^{\text{geom}}(f)$. For each $p \in P$, let $b_p$ denote the inertia generator at $p$ in the geometric IMG of $f$.

\begin{proposition}[\cite{Pinkpolyn}, Proposition~1.7.15]\label{Pink:gen}
Let $k$ be a number field, and let $f \in k(x)$ be a quadratic rational function with post-critical set $P$ as described in \eqref{pcorbit}. For any $1 \leq i \leq r$, the element $b_{p_i}$ is conjugate under $\Omega$ to
\begin{equation}\label{eq:gens}
\begin{cases}
(b_{p_{i-1}}, \id)\sigma & \text{if } i = 1 \text{ or } i = s, \\
(b_{p_{i-1}}, \id) & \text{otherwise}.
\end{cases}
\end{equation}
\end{proposition}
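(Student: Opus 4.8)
The plan is to read off the wreath recursion satisfied by each inertia generator $b_{p_i}$ directly from the ramification data of the degree-$2$ cover $f$, and then to absorb the resulting indeterminacy into an $\Omega$-conjugacy so that each generator takes the stated normalized form. I would fix the basepoint $x_0 \in \PP^1(k)\setminus P$, label its two preimages $f^{-1}(x_0)=\{y_0,y_1\}$, and choose connecting paths from $x_0$ to each $y_j$. Lifting a loop through $f$ then produces, for every $\gamma \in \pi_1^{\acute{e}t}(\PP^1_{\bar{k}}\setminus P, x_0)$, a level-$1$ permutation together with two sections lying again in $\pi_1^{\acute{e}t}(\PP^1_{\bar{k}}\setminus P, x_0)$, i.e.\ an expression $\gamma=(\gamma_0,\gamma_1)\tau$ compatible with the semidirect-product structure $\Omega \simeq (\Omega\times\Omega)\rtimes\customlangle\sigma\customrangle$ recalled earlier. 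The entire computation reduces to applying this to $\gamma=b_{p_i}$ and propagating it by self-similarity.

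First I would record the arithmetic of the cycle. The critical points are $p_r$ and $p_{s-1}$, so the branch points (critical values) are $f(p_r)=p_1$ and $f(p_{s-1})=p_s$; thus $p_i$ is a branch point exactly when $i=1$ or $i=s$. Moreover, since $P$ is a single $f$-orbit, the only post-critical preimage of $p_i$ is $p_{i-1}$: if $f(p_j)=p_i$ then $p_{j+1}=p_i$, forcing $j\equiv i-1 \pmod r$. This last observation is what guarantees that exactly one preimage of each $p_i$ contributes a nontrivial section. I would then split into the two cases. If $p_i$ is \emph{not} a branch point, $f$ is unramified over a small disc around $p_i$, so the small loop $b_{p_i}$ lifts to two disjoint loops and the level-$1$ permutation $\tau$ is trivial. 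Its two preimages are distinct regular points: one is $p_{i-1}\in P$, the other does not lie in $P$ by the orbit argument above, hence the loop encircling it is contractible in $\PP^1_{\bar{k}}\setminus P$ and its section is trivial, while the loop around $p_{i-1}$ is conjugate to $b_{p_{i-1}}$. This yields the ``otherwise'' case $b_{p_i}\sim(b_{p_{i-1}},\id)$.

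If $p_i$ is a branch point, then $f^{-1}(p_i)$ is the single ramified point $p_{i-1}$, which is $p_r$ when $i=1$ and $p_{s-1}$ when $i=s$. A loop around a simple branch point of a degree-$2$ cover lifts to a single path interchanging $y_0$ and $y_1$, so $\tau=\sigma$; closing this lift up against the chosen connecting paths produces exactly one nontrivial section, the loop around $p_{i-1}$, which is conjugate to $b_{p_{i-1}}$, the other section being trivial. This yields $b_{p_i}\sim(b_{p_{i-1}},\id)\sigma$, and the convention $p_0=p_r$ reproduces the value $b_{p_r}$ at $i=1$.

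The hard part is to upgrade these level-$1$ computations to the full recursive statement and to keep the conjugacy under control. Concretely, I must verify that the single nontrivial section is a conjugate of the \emph{generator} $b_{p_{i-1}}$ attached to the same tree, rather than merely some loop around $p_{i-1}$ with an a priori different recursion; this forces me to choose the connecting paths and the identification of the subtree rooted at the relevant level-$1$ vertex compatibly with the chosen isomorphism $T\cong T_{x_0}$, so that the self-similarity of $\gm(f)$ is respected at every level. Because the iterated monodromy group is only defined up to conjugacy in $\Omega$ (reflecting the freedom in basepoint and tree identification), these path choices can be absorbed into a single conjugation, which is precisely the ``conjugate under $\Omega$'' of the statement. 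This bookkeeping of paths, together with the induction on levels supplied by the self-similar structure, is the main obstacle; once it is in place, the two case computations above propagate to all levels and establish Proposition~\ref{Pink:gen}.
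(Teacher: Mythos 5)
The paper offers no proof of this proposition at all---it is quoted directly from Pink (\cite{Pinkpolyn}, Proposition~1.7.15)---and your argument correctly reconstructs the standard loop-lifting computation that underlies Pink's proof: the orbit argument showing $p_{i-1}$ is the unique post-critical preimage of $p_i$, the trivial-section claim for the non-post-critical preimage, and the transposition $\sigma$ at the two branch values $p_1$ and $p_s$ are all exactly right. The one slight imprecision is in the branch case, where for a fixed system of connecting paths both sections are in general nontrivial and only their \emph{product} is conjugate to $b_{p_{i-1}}$; this is harmless for the statement at hand, since the conjugacy is asserted for each generator separately, so either your per-generator path choices (absorbed into the ambient $\Omega$-conjugacy) or an appeal to Lemma~\ref{lem:conj}(1) converts it into the normalized form $(b_{p_{i-1}}, \id)\sigma$.
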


The group $G^{\text{geom}}$ is topologically generated by $b_{p_1}, \ldots, b_{p_r}$, and these elements satisfy the conjugacy relations given in Proposition~\ref{Pink:gen}. We show that the conjugacy class of the geometric IMG of $f$ is determined by any set of generators satisfying these relations.

Define elements $a_1, \ldots, a_r \in \Omega$ recursively by:
\begin{align}\label{generators}
\begin{split}
a_1 &= (a_r, \id)\sigma, \\
a_i &= (\id, a_{i-1}) \quad \text{for } 2 \leq i \leq s - 1, \\
a_s &= (\id, a_{s-1})\sigma, \\
a_i &= (a_{i-1}, \id) \quad \text{for } s + 1 \leq i \leq r.
\end{split}
\end{align}

Let $G \subset \Omega$ be the closed subgroup topologically generated by the elements $a_1, \ldots, a_r$, i.e.,
\[
G = \customlangle\customlangle a_1, \ldots, a_r \customrangle\customrangle.
\]
Since $G$ is a closed subgroup, it is naturally a profinite group and 
\[ G \simeq \lim_{n} G_n. \]
\begin{theorem}\label{thm:geom}
Let $k$ be a number field, and let $f(x) \in k(x)$ be a quadratic rational function with post-critical set $P$ as described in \eqref{pcorbit}. Then there exists $\gamma \in \Omega$ such that
\[
G^{\text{geom}}(f) = \gamma G \gamma^{-1}.
\]
\end{theorem}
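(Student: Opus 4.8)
The plan is to show that the conjugacy class of a closed subgroup of $\Omega$ topologically generated by elements satisfying the recursive relations in Proposition~\ref{Pink:gen} is uniquely determined, so that $\ga^{\text{geom}}(f)$ and $G$ must be conjugate. By Proposition~\ref{Pink:gen}, each inertia generator $b_{p_i}$ is $\Omega$-conjugate to the explicit element on the right-hand side of \eqref{eq:gens}, and these right-hand sides are exactly the defining recursions \eqref{generators} for the $a_i$. Thus both $G^{\text{geom}}(f)$ and $G$ are topologically generated by $r$ elements satisfying the same individual conjugacy prescriptions; the content of the theorem is that these prescriptions, taken together, pin down the generators simultaneously up to a single global conjugation in $\Omega$.

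First I would set up an inductive argument along the levels of the tree. The key observation is that the recursions \eqref{generators} are level-compatible: knowing $a_i \restr{T_{n-1}}$ for all $i$ determines $a_i \restr{T_n}$ for all $i$ via the semidirect product rule \eqref{semi}, since each $a_i$ is built from some $a_j$ (with $j < i$, cyclically) placed in a specified coordinate and possibly composed with $\sigma$. I would prove by induction on $n$ that if $(b_{p_i})_{i=1}^r$ is any family in $\Omega$ with each $b_{p_i}$ $\Omega$-conjugate to the prescribed element and satisfying the recursions, then there is $\gamma_n \in \Omega_n$, compatible with $\gamma_{n-1}$ under $\pi_{n,n-1}$, such that $\gamma_n (b_{p_i}\restr{T_n}) \gamma_n^{-1} = a_i \restr{T_n}$ for all $i$. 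At the base level the generators act on $V_1$ only through their $\sigma$-components, which are identical for the two families by \eqref{eq:gens}, so $\gamma_1$ may be taken trivial. For the inductive step I would use that the section data $(b_{p_{i-1}}, \id)$ or $(\id, b_{p_{i-1}})$ is governed at level $n$ by the lower-level conjugator, and that the single conjugating element can be chosen coordinatewise on the two subtrees $T_0$ and $T_1$; the freedom to choose $\gamma_n$ inside the kernel of $\pi_{n,n-1}$ is precisely what absorbs the discrepancy between the two families on level $n$. Passing to the inverse limit $\gamma = \varprojlim \gamma_n \in \Omega$ then yields $\gamma\, b_{p_i}\, \gamma^{-1} = a_i$ for all $i$, whence conjugation carries $G^{\text{geom}}(f) = \customlangle\customlangle b_{p_1}, \ldots, b_{p_r}\customrangle\customrangle$ onto $G = \customlangle\customlangle a_1, \ldots, a_r\customrangle\customrangle$ after closing up.

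The main obstacle I anticipate is the coherence of the conjugators across levels, i.e.\ ensuring the chosen $\gamma_n$ form a compatible system so that the limit exists in $\Omega$. Each $b_{p_i}$ is only assumed conjugate to its prescribed form by some \emph{a priori} unrelated element of $\Omega$, and one must show a single $\gamma$ works for all $r$ generators simultaneously; the recursive structure is what forces consistency, since the section at a vertex is determined by the same data one level down. A subtle point is that conjugating $b_{p_i}$ into standard form on subtrees must be done so that the induced conjugations on the sections agree with the choices already made for the generators $b_{p_{i-1}}$ appearing in the recursion; tracking this dependency carefully is the crux. I expect this to reduce to a bookkeeping argument on how $\sigma$-components and section placements interact under \eqref{semi} and \eqref{sec}, together with the observation that the sign data \eqref{sgn:sigma} is conjugation-invariant and hence automatically matches. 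Once the level-by-level conjugators are shown to be choosable compatibly, completeness of $\Omega$ under the profinite topology delivers the limit $\gamma$ and the theorem follows.
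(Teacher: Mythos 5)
There is a genuine gap, and it sits exactly at the step you flag as ``bookkeeping.'' You aim to produce a single $\gamma \in \Omega$ with $\gamma b_{p_i} \gamma^{-1} = a_i$ \emph{exactly}, for all $i$ simultaneously, using only the componentwise conjugacy data of Proposition~\ref{Pink:gen}. That data does not determine the tuple $(b_{p_1}, \ldots, b_{p_r})$ up to simultaneous conjugacy: a tuple whose entries are each individually conjugate to the prescribed recursive forms can fail to be simultaneously normalizable, and your inductive step has no mechanism to rule this out. Concretely, at level $n$ you have $r$ independent discrepancies (one per generator) but only one conjugator $\gamma_n$ in the kernel of $\pi_{n,n-1}$ to absorb them; moreover the recursion is cyclic ($a_1$ has section $a_r$, and $a_s$ has section $a_{s-1}$), so normalizing $b_{p_1}$ and $b_{p_s}$ imposes constraints that wrap around the whole orbit and cannot be met level-by-level from the conjugacy prescriptions alone. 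The missing ingredient is the group-theoretic shadow of the sphere relation: some permuted product $b_{p_{\tau(1)}} \cdots b_{p_{\tau(r)}} = \id$ of the inertia generators is trivial. The paper's Proposition~\ref{prop:conj} takes this relation as a hypothesis and uses it essentially — in Step 1 the element $c_{s-1}$ is \emph{defined} from the relation (equation~\eqref{cs-1}) rather than read off from $b_s$, and in Case 3 the identity $b_s^{-1} = b_{s+1} \cdots b_r b_1 \cdots b_{s-1}$ is what lets one handle the generator at $i = s$ at all. Your proposal never invokes this relation, so the induction cannot close.

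Second, even with the relation, exact simultaneous conjugacy is stronger than what is true or needed; the correct (and provable) conclusion is the weaker one in Proposition~\ref{prop:conj}: there exist $\phi \in \Omega_n$ and \emph{varying inner factors} $x_i \in G_n$ with $b_i = (\phi x_i)(a_i \restr{T_n})(\phi x_i)^{-1}$. This is the same shape as Pink's Proposition~2.4.1 for polynomials, and the inner factors cannot be dispensed with. That weaker statement gives only the containment $\beta^{-1} G^{\text{geom}}(f) \beta \subset G$, and the theorem then requires a final upgrade — the paper cites \cite[Lemma~1.3.2]{Pinkpolyn}, which says that a closed subgroup of $G$ containing a $G$-conjugate of each topological generator equals $G$ — to convert containment into equality. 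Your proposal has no analogue of this step because it aimed at the stronger exact claim; if you weaken your inductive statement to allow the factors $x_i$ (and carry the product relation through the induction, re-deriving it one level down as the paper does when constructing the $c_i$), the argument becomes essentially the paper's proof, with Pink's lemma supplying the last step.
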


We know that $G^{\text{geom}}$ is topologically generated by $b_{p_1}, \ldots, b_{p_r}$, and that some product of these elements is the identity. Corollary~\ref{cor:conja} shows that $a_i \sim b_{p_i}$ for all $1 \leq i \leq r$, and Proposition~\ref{prop:conj} shows that there exists $\beta \in \Omega$ such that
\[
\beta^{-1} G^{\text{geom}}(f) \beta \subset G.
\]
By \cite[Lemma~1.3.2]{Pinkpolyn}, it then follows that $G^{\text{geom}}(f) = \beta G \beta^{-1}$. We prove Proposition~\ref{prop:conj} and Corollary~\ref{cor:conja} in the next section.

\subsection{Proof of Theorem~\ref{thm:geom}} \ \\
If an automorphism $\gamma \in \Omega$ is conjugate to another automorphism $\gamma' \in \Omega$, we denote this by $\gamma \sim \gamma'$.

\begin{lemma}\label{lem:conj} \hfill
\begin{enumerate}
\item Let $\gamma = (\gamma_0, \gamma_1)\sigma \in \Omega$. The conjugacy class of $\gamma$ is
\[
\left\{ (\beta_0, \beta_1)\sigma \in \Omega \mid \gamma_0 \gamma_1 \sim \beta_0 \beta_1 \right\}.
\]

\item Let $\gamma = (\gamma_0, \gamma_1) \in \Omega$. The conjugacy class of $\gamma$ is
\[
\left\{ (\beta_0, \beta_1) \in \Omega \mid \gamma_0 \sim \beta_0 \text{ and } \gamma_1 \sim \beta_1 \right\}
\cup
\left\{ (\beta_0, \beta_1) \in \Omega \mid \gamma_0 \sim \beta_1 \text{ and } \gamma_1 \sim \beta_0 \right\}.
\]
\end{enumerate}
\end{lemma}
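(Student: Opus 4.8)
The plan is to prove both parts by understanding how conjugation interacts with the semidirect product structure $\Omega \simeq (\Omega \times \Omega) \rtimes \customlangle \sigma \customrangle$, using the multiplication rule~\eqref{semi} and the inverse formula given after it. Recall that a general element of $\Omega$ has the form $(\delta_0, \delta_1)\tau$ with $\tau \in \{\id, \sigma\}$, so a conjugate of $\gamma$ splits into two cases according to whether $\tau = \id$ or $\tau = \sigma$. I would carry out these conjugations explicitly in each part and then identify the resulting constraint on $(\beta_0, \beta_1)$.

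\emph{Part (2), the case $\gamma = (\gamma_0, \gamma_1)$.} First I would conjugate by $\delta = (\delta_0, \delta_1)$. Using~\eqref{semi} and the inverse formula, a direct computation gives
\[
\delta \gamma \delta^{-1} = (\delta_0 \gamma_0 \delta_0^{-1},\; \delta_1 \gamma_1 \delta_1^{-1}),
\]
so the two sections are conjugated independently; as $\delta_0, \delta_1$ range over $\Omega$ this produces exactly the first set $\{(\beta_0,\beta_1) : \gamma_0 \sim \beta_0,\ \gamma_1 \sim \beta_1\}$. Next I would conjugate by $\delta = (\delta_0, \delta_1)\sigma$. Here $\sigma$ swaps the two coordinates, and the computation yields
\[
\delta \gamma \delta^{-1} = (\delta_0 \gamma_1 \delta_0^{-1},\; \delta_1 \gamma_0 \delta_1^{-1}),
\]
so the roles of $\gamma_0$ and $\gamma_1$ are interchanged before conjugation, producing the second set $\{(\beta_0,\beta_1) : \gamma_0 \sim \beta_1,\ \gamma_1 \sim \beta_0\}$. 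Taking the union over all possible conjugators (both values of $\tau$) gives the claimed conjugacy class; the reverse inclusion is immediate since both sets are clearly achievable. One small point to verify is that every element achieving these conjugacies has trivial $\tau$-part, which is automatic because conjugation preserves the image in $S_2 \simeq \Omega/(\Omega\times\Omega)$, and $\gamma$ lies in the kernel.

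\emph{Part (1), the case $\gamma = (\gamma_0, \gamma_1)\sigma$.} Here $\gamma$ has nontrivial image $\sigma$ in $S_2$, and since this image is central in $S_2$ and preserved under conjugation, any conjugate is again of the form $(\beta_0, \beta_1)\sigma$. The key is to track the invariant $\gamma_0\gamma_1$ up to conjugacy. Conjugating by $\delta=(\delta_0,\delta_1)$ and carefully applying~\eqref{semi} together with the inverse rule, the resulting element $(\beta_0,\beta_1)\sigma$ satisfies $\beta_0\beta_1 = \delta_0\,\gamma_0\gamma_1\,\delta_0^{-1}$ (the $\delta_1$ contributions cancel because of the swap built into $\sigma$), so $\beta_0\beta_1 \sim \gamma_0\gamma_1$. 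Conjugating instead by $(\delta_0,\delta_1)\sigma$ gives a similar relation with the roles arranged so that again $\beta_0\beta_1$ stays conjugate to $\gamma_0\gamma_1$. This shows the conjugacy class is contained in $\{(\beta_0,\beta_1)\sigma : \gamma_0\gamma_1 \sim \beta_0\beta_1\}$. For the reverse inclusion, given any target with $\beta_0\beta_1 \sim \gamma_0\gamma_1$, I would exhibit an explicit conjugator: one can first use the freedom in the second coordinate to normalize $\gamma$ to the form $(\gamma_0\gamma_1, \id)\sigma$, and then conjugate the product by whatever element realizes $\gamma_0\gamma_1 \sim \beta_0\beta_1$.

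The main obstacle I anticipate is the reverse inclusion in Part (1): showing that the single invariant $\gamma_0\gamma_1$ (up to conjugacy) is \emph{complete}, i.e. that any two elements of the form $(\gamma_0,\gamma_1)\sigma$ and $(\beta_0,\beta_1)\sigma$ with $\gamma_0\gamma_1 \sim \beta_0\beta_1$ are actually conjugate. The calculation in the forward direction only shows $\gamma_0\gamma_1$ is an invariant; the substance is that it suffices. The cleanest route is the normalization step above—conjugation by $(\id, \gamma_1^{-1})$ (or a similar element) collapses the second section into the first, reducing the problem to conjugacy of the product alone, after which the result follows from transitivity of $\sim$. I would make sure this normalization is valid as an identity in $\Omega$ (not just $\Omega_n$) by checking it respects the recursive structure, so that the conjugacy is genuine in the profinite group.
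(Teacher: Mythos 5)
Your proposal is correct and takes essentially the same approach as the paper: both proceed by explicitly computing $\alpha\gamma\alpha^{-1}$ via the semidirect product rule, the paper in a single pass with a general conjugator $\alpha=(\alpha_0,\alpha_1)\tau$ yielding $\alpha\gamma\alpha^{-1}=(\alpha_0\gamma_{(0)\tau}\alpha_1^{-1},\,\alpha_1\gamma_{(1)\tau}\alpha_0^{-1})\sigma$ in part (1), while you split into the cases $\tau=\id$ and $\tau=\sigma$. Your normalization $(\gamma_0,\gamma_1)\sigma\sim(\gamma_0\gamma_1,\id)\sigma$ via conjugation by $(\id,\gamma_1^{-1})$ merely makes explicit the reverse inclusion that the paper leaves implicit in its formula (one can solve for $\alpha_1$ given $\alpha_0$ and the target $\beta_0$), so the arguments are the same in substance.
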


\begin{proof}
Let $\gamma = (\gamma_0, \gamma_1)\sigma$ and let $\alpha = (\alpha_0, \alpha_1)\tau$. Then:
\begin{align*}
\alpha \gamma \alpha^{-1}
&= (\alpha_0, \alpha_1)\tau (\gamma_0, \gamma_1)\sigma (\alpha_{(0)\tau}^{-1}, \alpha_{(1)\tau}^{-1})\tau^{-1} \\
&= (\alpha_0, \alpha_1)(\gamma_{(0)\tau}, \gamma_{(1)\tau}) \tau \sigma (\alpha_{(0)\tau}^{-1}, \alpha_{(1)\tau}^{-1}) \tau^{-1} \\
&= (\alpha_0, \alpha_1)(\gamma_{(0)\tau}, \gamma_{(1)\tau}) (\alpha_1^{-1}, \alpha_0^{-1}) \tau \sigma \tau^{-1} \\
&= (\alpha_0 \gamma_{(0)\tau} \alpha_1^{-1}, \alpha_1 \gamma_{(1)\tau} \alpha_0^{-1}) \sigma,
\end{align*}
since $\tau \sigma \tau^{-1} = \sigma$ in $S_2$. Letting $(\beta_0, \beta_1)\sigma = \alpha \gamma \alpha^{-1}$, we find
\[
\beta_0 \beta_1 = \alpha_0 \gamma_{(0)\tau} \gamma_{(1)\tau} \alpha_0^{-1}.
\]
Note also that
\[
\gamma_0 \gamma_1 = \gamma_1^{-1} (\gamma_1 \gamma_0) \gamma_1,
\]
so the statement follows.

Now for the second part, let $\gamma = (\gamma_0, \gamma_1)$ and $\alpha = (\alpha_0, \alpha_1)\tau$. Then:
\begin{align*}
\alpha \gamma \alpha^{-1}
&= (\alpha_0, \alpha_1)\tau (\gamma_0, \gamma_1) \tau^{-1} (\alpha_0^{-1}, \alpha_1^{-1}) \\
&= (\alpha_0 \gamma_{(0)\tau} \alpha_0^{-1}, \alpha_1 \gamma_{(1)\tau} \alpha_1^{-1}),
\end{align*}
which completes the proof.
\end{proof}

Corollary~\ref{cor:conja}, together with Proposition~\ref{Pink:gen}, justifies our choice of the set of generators $a_i$ for the group $G$.

\begin{corollary}\label{cor:conja}
For $1 \leq i \leq r$, the automorphism $a_i$ is conjugate to $b_{p_i}$ in $\Omega$.
\end{corollary}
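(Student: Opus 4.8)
The plan is to prove the conjugacies \emph{level by level}, by induction on $n$, rather than by induction on the index $i$. The reason is the following obstacle. Applying Lemma~\ref{lem:conj} together with Proposition~\ref{Pink:gen} to the recursion \eqref{generators} shows that, for every $i$, the statement $a_i \sim b_{p_i}$ is equivalent to $a_{i-1} \sim b_{p_{i-1}}$ (reading indices modulo $r$). Indeed, for $i \notin \{1,s\}$ one has $a_i = (a_{i-1},\id)$ or $(\id,a_{i-1})$ and $b_{p_i} \sim (b_{p_{i-1}},\id)$, so part (2) of Lemma~\ref{lem:conj} reduces $a_i \sim b_{p_i}$ to $a_{i-1} \sim b_{p_{i-1}}$; for $i \in \{1,s\}$ one has $a_i = (a_{i-1},\id)\sigma$ or $(\id,a_{i-1})\sigma$ and $b_{p_i} \sim (b_{p_{i-1}},\id)\sigma$, and part (1) compares the products $\id\cdot a_{i-1}=a_{i-1}$ (resp.\ $a_{i-1}\cdot\id$) with $b_{p_{i-1}}\cdot\id$, again reducing to $a_{i-1}\sim b_{p_{i-1}}$. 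Since $a_1$ depends on $a_r$, these equivalences form a closed cycle $a_1\sim b_{p_1}\Leftrightarrow\cdots\Leftrightarrow a_r\sim b_{p_r}\Leftrightarrow a_1\sim b_{p_1}$: they are all equivalent, but this alone does not establish that any one of them holds. This circularity is the main obstacle, and it reflects the fact that the section of an element of $\Omega$ is again an element of the full group $\Omega$, so the recursion does not decrease any complexity.

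To break the cycle I would pass to the finite quotients $\Omega_n$, where the section of an element genuinely lives in the strictly smaller group $\Omega_{n-1}$. The computation proving Lemma~\ref{lem:conj} is equally valid in $\Omega_{n+1}\simeq(\Omega_n\times\Omega_n)\rtimes S_2$, with all conjugacies of the sections now taken in $\Omega_n$. I would then prove, by induction on $n\geq 0$, the statement
\[
P(n)\colon\quad \pi_n(a_i)\sim\pi_n(b_{p_i})\ \text{in}\ \Omega_n\ \text{for all}\ 1\leq i\leq r.
\]
The base case $n=0$ is trivial, as $\Omega_0$ is trivial. For the inductive step, fix $i$ and read off $\pi_{n+1}(a_i)$ from \eqref{generators}: its first-level permutation is $\sigma$ exactly when $i\in\{1,s\}$, and its unique nontrivial section is $\pi_n(a_{i-1})$. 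Since conjugacy is preserved by $\pi_{n+1}$, Proposition~\ref{Pink:gen} gives $\pi_{n+1}(b_{p_i})\sim(\pi_n(b_{p_{i-1}}),\id)$ or $(\pi_n(b_{p_{i-1}}),\id)\sigma$ accordingly; applying the finite-level form of Lemma~\ref{lem:conj} then reduces $\pi_{n+1}(a_i)\sim\pi_{n+1}(b_{p_i})$ to $\pi_n(a_{i-1})\sim\pi_n(b_{p_{i-1}})$, which is part of $P(n)$. The key point is that this reduction now \emph{decreases the level}, so the cyclic dependence in $i$ no longer produces a circular argument: $P(n+1)$ for all indices follows from $P(n)$ for all indices.

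Finally I would upgrade the level-by-level conjugacies to a genuine conjugacy in $\Omega$ by compactness. Fixing $i$, set
\[
C_n:=\bigl\{\,c\in\Omega_n \ \big|\ c\,\pi_n(a_i)\,c^{-1}=\pi_n(b_{p_i})\,\bigr\},
\]
a nonempty finite set by $P(n)$. Because $\pi_{n+1,n}$ is a group homomorphism, it maps $C_{n+1}$ into $C_n$, so $(C_n)_{n\geq 0}$ is an inverse system of nonempty finite sets and hence $\varprojlim_n C_n\neq\emptyset$. Any $c\in\varprojlim_n C_n\subset\Omega$ satisfies $\pi_n(c\,a_i\,c^{-1})=\pi_n(b_{p_i})$ for all $n$, and since an automorphism of $T$ is determined by its restrictions $\restr{T_n}$ to all $T_n$, this yields $c\,a_i\,c^{-1}=b_{p_i}$, i.e.\ $a_i\sim b_{p_i}$ in $\Omega$. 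I expect the bulk of the work to be the bookkeeping in the inductive step, namely tracking which coordinate carries the nontrivial section and checking that the first-level permutations of $a_i$ and $b_{p_i}$ agree precisely for $i\in\{1,s\}$; the compactness upgrade is routine.
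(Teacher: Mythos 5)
Your proof is correct, and it uses exactly the two ingredients the paper cites --- Lemma~\ref{lem:conj} and Proposition~\ref{Pink:gen} --- but it is more careful than the paper's own argument, which consists of the single sentence that the corollary ``follows directly'' from them. As you rightly observe, it does not follow directly by induction on $i$: the recursion is cyclic ($a_1$ refers back to $a_r$), so Lemma~\ref{lem:conj} only shows that the $r$ conjugacy statements are all equivalent to one another, not that any one of them holds. Your repair --- proving $\pi_n(a_i)\sim\pi_n(b_{p_i})$ in $\Omega_n$ by induction on the level $n$, where the reduction genuinely decreases $n$ and the base case is trivial, and then upgrading to conjugacy in $\Omega$ via the inverse system of nonempty finite conjugator sets $C_n$ --- is sound at every step: projections preserve conjugacy, the computation behind Lemma~\ref{lem:conj} is valid verbatim in $\Omega_{n+1}\simeq(\Omega_n\times\Omega_n)\rtimes S_2$, your case bookkeeping (the first-level permutation is $\sigma$ precisely for $i\in\{1,s\}$, and part (2) of the lemma allows the swapped-coordinate comparison needed for $2\leq i\leq s-1$) is accurate, and the final step is the standard fact that an inverse limit of nonempty finite sets is nonempty. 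Your route is also consonant with the paper's own methods: the proof of Proposition~\ref{prop:conj} is exactly such a level-by-level induction (the statement $*_n$), so in effect you have made explicit, for the corollary, the scaffolding that the paper deploys only for the harder simultaneous-conjugacy statement. In short: same ingredients as the paper, but you identified a genuine circularity that the one-line deduction glosses over, and your level induction plus compactness argument fills it correctly.
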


\begin{proof}
The statement follows directly from Lemma~\ref{lem:conj} and Proposition~\ref{Pink:gen}.
\end{proof}

\begin{lemma}\label{b-rel}
The product $a_1 a_2 \ldots a_r$ is equal to the identity in $\Omega$.
\end{lemma}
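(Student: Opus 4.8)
The plan is to multiply the generators out using the semidirect-product rule \eqref{semi}, reorganize the result into a self-referential identity of the form $P = (a_r P a_r^{-1}, \id)$ where $P := a_1 a_2 \cdots a_r$, and then finish by an induction on the level of the tree. The point of the reorganization is that once $P$ is known to agree with the identity on $T_{n-1}$, its only possibly nontrivial section $a_r P a_r^{-1}$ automatically agrees with the identity on $T_{n-1}$ as well, which forces $P$ to be trivial on $T_n$.

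First I would compute the two blocks of the product separately, namely $A := a_1 a_2 \cdots a_{s-1}$ and $B := a_s a_{s+1} \cdots a_r$, each of which contains exactly one of the two factors carrying a $\sigma$ (from \eqref{generators}, these are $a_1$ and $a_s$). In block $A$ the leading $\sigma$ of $a_1$ means that each successive right-hand factor $a_j = (\id, a_{j-1})$ is swapped into the left coordinate by \eqref{semi}, so the partial products telescope on the left and one obtains
\[
A = (a_r a_1 a_2 \cdots a_{s-2}, \id)\sigma.
\]
Symmetrically, in block $B$ the leading $\sigma$ of $a_s$ sends each factor $a_j = (a_{j-1}, \id)$ into the right coordinate, giving
\[
B = (\id, a_{s-1} a_s \cdots a_{r-1})\sigma.
\]
Multiplying $A$ and $B$, the two copies of $\sigma$ compose to the trivial permutation on the first level, and a final application of \eqref{semi} collapses everything into the single left coordinate:
\[
P = A B = (a_r a_1 a_2 \cdots a_{r-1}, \id).
\]
I would verify the degenerate ranges $s = 2$ (block $A$ is just $a_1$) and $s = r$ (block $B$ is just $a_r$) directly; in both the same formula results.

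The key observation is then purely formal: since $P = a_1 \cdots a_r$, the left section $a_r a_1 \cdots a_{r-1}$ equals $a_r P a_r^{-1}$, so we obtain the identity $P = (a_r P a_r^{-1}, \id)$ in $\Omega$. To conclude $P = \id$ I would argue by induction on $n$ that $P\restr{T_n} = \id$. For $n = 1$ this holds because the first-level permutation of $P$ is $\sigma^2 = \id$, as the only $\sigma$-contributions come from $a_1$ and $a_s$. For the inductive step, if $P\restr{T_{n-1}} = \id$, then restricting the conjugate and using that restriction is multiplicative gives $(a_r P a_r^{-1})\restr{T_{n-1}} = (a_r\restr{T_{n-1}})\,\id\,(a_r\restr{T_{n-1}})^{-1} = \id$; since the other section of $P$ is already $\id$, the recursive identity yields $P\restr{T_n} = \id$. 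As automorphisms agreeing on every $T_n$ are equal, this gives $P = \id$.

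The computations of $A$ and $B$ are routine bookkeeping with \eqref{semi}; the only genuine content is recognizing that the product collapses to the self-referential form $P = (a_r P a_r^{-1}, \id)$, after which the level induction is immediate. I therefore expect the main (and fairly minor) obstacle to be organizing the telescoping multiplications cleanly and handling the boundary cases $s = 2$ and $s = r$, rather than any real difficulty.
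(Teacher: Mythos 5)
Your proof is correct and takes essentially the same route as the paper: the paper's proof also multiplies out the generators via \eqref{semi} to obtain $a_1 \cdots a_r \restr{T_n} = \bigl(a_r a_1 \cdots a_{r-1} \restr{T_{n-1}}, \id\bigr)$ and concludes by induction on $n$. Your version merely makes explicit what the paper leaves implicit, namely the two-block telescoping and the observation that the left section is the conjugate $a_r P a_r^{-1}$, which justifies the inductive step.
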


\begin{proof}
It suffices to show that $a_1 \cdots a_r \restr{T_n} = \id$ for all $n \geq 1$. We compute:
\begin{align*}
a_1 \cdots a_r \restr{T_n}
&= (a_r, \id)\sigma (\id, a_1) \cdots (\id, a_{s-1})\sigma (a_s, \id) \cdots (a_{r-1}, \id) \restr{T_n} \\
&= \left(a_r a_1 \cdots a_{r-1} \restr{T_{n-1}}, \id\right).
\end{align*}
The result follows by induction on $n$ and taking the limit as $n \to \infty$.
\end{proof}

We now show that the image of the map $G \to \Omega$ given by $g \mapsto (g, g)$ is contained in $G$.

\begin{lemma}\label{(g,g)}
For all $g \in G$, the automorphism $(g, g)$ belongs to $G$.
\end{lemma}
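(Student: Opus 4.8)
The plan is to exploit that the diagonal map $\Delta\colon \Omega \to \Omega$ sending $g$ to $(g,g)$ (the automorphism fixing the first level and acting as $g$ on each of $T_0$ and $T_1$) is a \emph{continuous group homomorphism}, since it factors as the continuous diagonal $\Omega \to \Omega\times\Omega$ followed by the embedding $\Omega\times\Omega \hookrightarrow \Omega$. Consequently the set $S := \{g \in \Omega : (g,g) \in G\} = \Delta^{-1}(G)$ is a \emph{closed subgroup} of $\Omega$. As $G = \customlangle\customlangle a_1, \ldots, a_r \customrangle\customrangle$ is by definition the closure of $\langle a_1, \ldots, a_r\rangle$, it suffices to show that each generator lies in $S$, i.e. that $(a_i, a_i) \in G$ for every $1 \le i \le r$; then $S$ contains the dense subgroup $\langle a_1, \ldots, a_r\rangle$ and, being closed, contains all of $G$, which is exactly the assertion.

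Two of the diagonals arise for free by squaring the two generators that carry a copy of $\sigma$. Using the multiplication rule \eqref{semi} together with $\sigma^2 = \id$, I would compute
\[
a_1^2 = (a_r, \id)\sigma\,(a_r, \id)\sigma = (a_r, a_r), \qquad a_s^2 = (\id, a_{s-1})\sigma\,(\id, a_{s-1})\sigma = (a_{s-1}, a_{s-1}),
\]
so that $(a_r, a_r)$ and $(a_{s-1}, a_{s-1})$ lie in $G$ at once. For the remaining indices the defining relations \eqref{generators} only supply \emph{one-sided} elements: directly $a_{j+1} = (\id, a_j) \in G$ for $1 \le j \le s-2$, and $a_{j+1} = (a_j, \id) \in G$ for $s \le j \le r-1$.

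The crux is to manufacture the \emph{missing} side of each of these one-sided elements, and the key device is conjugation by the two swapping generators $a_1$ and $a_s$. A direct computation with \eqref{semi} and the inverse formula gives, for $s+1 \le i \le r$,
\[
a_1\, a_i\, a_1^{-1} = (a_r, \id)\sigma\,(a_{i-1}, \id)\,(\id, a_r^{-1})\sigma = (\id, a_{i-1}),
\]
and, for $2 \le i \le s-1$,
\[
a_s\, a_i\, a_s^{-1} = (\id, a_{s-1})\sigma\,(\id, a_{i-1})\,(a_{s-1}^{-1}, \id)\sigma = (a_{i-1}, \id).
\]
Thus conjugation by $a_1$ carries a left-supported generator to a right-supported one, and conjugation by $a_s$ does the reverse, yielding $(\id, a_j) \in G$ for $s \le j \le r-1$ and $(a_j, \id) \in G$ for $1 \le j \le s-2$. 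I expect verifying these two identities---keeping careful track of which section is transported across the swap---to be the main technical obstacle, although it is ultimately a mechanical application of the semidirect-product formulas.

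Combining the two sides then settles each remaining index: for $1 \le j \le s-2$ and for $s \le j \le r-1$ we now have both $(a_j, \id)$ and $(\id, a_j)$ in $G$, so $(a_j, a_j) = (a_j, \id)(\id, a_j) \in G$. Together with $(a_r, a_r) = a_1^2$ and $(a_{s-1}, a_{s-1}) = a_s^2$, this covers every index in $\{1, \ldots, s-2\} \cup \{s-1\} \cup \{s, \ldots, r-1\} \cup \{r\} = \{1, \ldots, r\}$, the degenerate ranges occurring when $s = 2$ or $s = r$ being absorbed by the two squaring identities. Hence $a_i \in S$ for all $i$, and by the closure argument of the first paragraph $(g,g) \in G$ for every $g \in G$.
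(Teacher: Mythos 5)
Your proof is correct and takes essentially the same route as the paper's: the squaring identities $a_1^2 = (a_r, a_r)$ and $a_s^2 = (a_{s-1}, a_{s-1})$, together with conjugation by $a_1$ and $a_s$ to manufacture the missing one-sided elements (the paper packages this as the single products $a_i(a_s a_i a_s^{-1}) = (a_{i-1}, a_{i-1})$ and $a_j(a_1 a_j a_1^{-1}) = (a_{j-1}, a_{j-1})$, which are exactly your two-step combinations), followed by passage to the topological generators. Your explicit observation that $\Delta^{-1}(G)$ is a closed subgroup of $\Omega$ simply formalizes the closure step the paper leaves implicit in its final sentence.
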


\begin{proof}
Observe that $a_1^2 = (a_r, a_r)$ and $a_s^2 = (a_{s-1}, a_{s-1})$. For $1 < i < s$, we compute:
\begin{align*}
a_i (a_s a_i a_s^{-1})
&= (\id, a_{i-1})(\id, a_{s-1}) \sigma (\id, a_{i-1})(a_{s-1}^{-1}, \id)\sigma \\
&= (a_{i-1}, a_{i-1}) \in G.
\end{align*}

Similarly, for $s < j \leq r$, we have:
\begin{align*}
a_j (a_1 a_j a_1^{-1})
&= (a_{j-1}, \id)(a_r, \id)\sigma (a_{j-1}, \id)(\id, a_r^{-1})\sigma \\
&= (a_{j-1}, a_{j-1}) \in G.
\end{align*}

Since $a_1, \ldots, a_r$ are topological generators of $G$, it follows that for any $g \in G$, the element $(g, g)$ lies in $G$.
\end{proof}

Finally, the recursive definition of $a_1, \ldots, a_r$ implies that all sections of the generators also lie in $G$. Thus, $G$ is a self-similar subgroup of $\Omega$.

\subsection{Conjugacy of the Generators}

The next proposition defines the profinite geometric IMG up to conjugacy. Its proof follows the style of \cite[Proposition~2.4.1]{Pinkpolyn}, with the additional assumption that some product of the generators is the identity.

\begin{proposition} \label{prop:conj}
Let $n \geq 1$. Suppose that $b_i \in \Omega_n$ are elements such that $b_i$ is conjugate to $a_i \restr{T_n}$ for all $1 \leq i \leq r$, and assume that
\[
b_{(1)\tau} \cdots b_{(r)\tau} = \id
\]
for some permutation $\tau \in S_r$. Then there exists $\phi \in \Omega_n$ and $x_i \in G_n$ such that
\[
b_i = (\phi x_i)(a_i \restr{T_n})(\phi x_i)^{-1} \quad \text{for all } i.
\]
\end{proposition}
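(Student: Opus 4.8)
The plan is to induct on $n$. The base case $n=1$ is immediate: since $\Omega_1\simeq S_2$ is abelian, every conjugacy class in $\Omega_1$ is a singleton, so $b_i=a_i\restr{T_1}$ for all $i$, and one takes $\phi=\id$, $x_i=\id$. For the inductive step I assume the statement at level $n-1$ and work entirely through the decomposition $\Omega_n\simeq(\Omega_{n-1}\times\Omega_{n-1})\rtimes S_2$, tracking sections.

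First I would record what conjugacy forces on the sections. By Lemma~\ref{lem:conj} together with the recursive shape \eqref{generators}, each $b_i$ with $i\notin\{1,s\}$ is inactive, of the form $(\beta,\id)$ or $(\id,\beta)$ with $\beta\sim a_{i-1}\restr{T_{n-1}}$, while the two \emph{active} generators satisfy $b_1=((b_1)_0,(b_1)_1)\sigma$ with $(b_1)_0(b_1)_1\sim a_r\restr{T_{n-1}}$ and $b_s=((b_s)_0,(b_s)_1)\sigma$ with $(b_s)_0(b_s)_1\sim a_{s-1}\restr{T_{n-1}}$. Next I expand the relation $b_{(1)\tau}\cdots b_{(r)\tau}=\id$ into sections using $(\gamma\gamma')_x=\gamma_x\gamma'_{(x)\gamma}$. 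Because exactly the two active generators act nontrivially on level $1$, the vertices $0$ and $1$ are carried through the ordered product with exactly two flips, so the relation splits into two identities $w_0=\id$ and $w_1=\id$, each an ordered product of one section of every $b_{(j)\tau}$. In the model case of the $a_i$ themselves, with the identity permutation, this is precisely the computation $a_1\cdots a_r\restr{T_n}=(a_ra_1\cdots a_{r-1}\restr{T_{n-1}},\,\id)$ from the proof of Lemma~\ref{b-rel}: all nontrivial sections collect into $w_0=a_ra_1\cdots a_{r-1}$, which is a cyclic shift of the full product and hence equals $\id$, while $w_1$ is a product of identities.

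The heart of the argument is a normalization together with a single application of the induction hypothesis. I would conjugate the whole tuple $(b_1,\dots,b_r)$ by one element of $\Omega_n$ so that the two active generators acquire the standard shape of $a_1,a_s$ (trivial section at $1$ for $b_1$, trivial section at $0$ for $b_s$) and the labels $a_1\restr{T_{n-1}},\dots,a_r\restr{T_{n-1}}$ all collect into a single section identity. Conjugating by $(\psi_0,\psi_1)\in\Omega_{n-1}\times\Omega_{n-1}$ conjugates the $0$- and $1$-sections separately, and mixes only the two sections of each active generator exactly as in the computation in the proof of Lemma~\ref{lem:conj}(1); the freedom in the active conjugacy classes given by Lemma~\ref{lem:conj}(1) then lets me standardize $b_1$, and the compatibility needed to standardize $b_s$ simultaneously is supplied precisely by the \emph{second} section identity (the one that is trivial in the model case), which forces the relation between $(b_s)_0$ and $(b_1)_1$ that a single $(\psi_0,\psi_1)$ requires. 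After this normalization the surviving section identity is a genuine instance of the hypothesis at level $n-1$ for the full tuple $(a_1\restr{T_{n-1}},\dots,a_r\restr{T_{n-1}})$: its factors are, up to reindexing, elements conjugate to the $a_m\restr{T_{n-1}}$ whose ordered product is the identity. Applying the induction hypothesis yields $\phi_0\in\Omega_{n-1}$ and $y_m\in G_{n-1}$ with each surviving section equal to $\phi_0\,y_m\,(a_m\restr{T_{n-1}})\,y_m^{-1}\phi_0^{-1}$.

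Finally I set $\phi=(\phi_0,\phi_0)\in\Omega_n$ (precomposed with the normalizing conjugator) and check that $\phi^{-1}b_i\phi$ is $G_n$-conjugate to $a_i\restr{T_n}$ for each $i$. The elements $y_m\in G_{n-1}$ are absorbed into the $x_i$ using Lemma~\ref{(g,g)}: the diagonal $(y,y)$ lies in $G_n$ for every $y\in G_{n-1}$, and conjugating $a_i\restr{T_n}$ by $(y,y)$ conjugates each of its sections by $y$ while leaving a trivial section trivial, which is exactly the per-section adjustment the induction hypothesis prescribes; for a generator whose nontrivial section sits in the position opposite to that of $a_i$, an active $x_i\in G_n$ (available since $a_1\in G_n$ is active) swaps the sections. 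A short computation with \eqref{semi} confirms that $(\phi x_i)(a_i\restr{T_n})(\phi x_i)^{-1}=b_i$ in every case. The step I expect to be the main obstacle is the simultaneous normalization of the two active generators: each contributes one section to $w_0$ and the other to $w_1$, and conjugacy constrains only the \emph{products} of their sections, so neither section is individually pinned down. Making the two section identities do double duty — one becoming the full induction instance, the other supplying exactly the coupling that lets a single $\Omega_n$-conjugation standardize both active generators — while tracking the dependence on the permutation $\tau$, which governs which section of each generator lands in $w_0$ versus $w_1$, is the delicate combinatorial core of the proof, and is the precise point where the hypothesis that some product of the generators equals the identity is used.
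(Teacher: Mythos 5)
Your overall scaffolding --- induction on $n$, splitting the relation $b_{(1)\tau}\cdots b_{(r)\tau}=\id$ into the two section identities $w_0=\id$ and $w_1=\id$, one application of the inductive hypothesis at level $n-1$, then a lift whose corrections are absorbed via Lemma~\ref{(g,g)} --- is the same as the paper's. But the pivot of your argument, the simultaneous standardization of the two active generators by a single conjugation, fails, and the reason you offer for its feasibility is incorrect. Write $b_1=(u_0,u_1)\sigma$ and $b_s=(v_0,v_1)\sigma$. Conjugating by $(\psi_0,\psi_1)$ sends $b_1$ to $(\psi_0u_0\psi_1^{-1},\,\psi_1u_1\psi_0^{-1})\sigma$, so killing the $1$-section of $b_1$ forces $\psi_0=\psi_1u_1$, while killing the $0$-section of $b_s$ forces $\psi_1=\psi_0v_0$; both together force $v_0u_1=\id$, which is not implied by the hypotheses. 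Your claim that the second section identity supplies exactly this coupling is false in general: $w_1=\id$ relates $u_1$ and $v_0$ only up to the sections of the \emph{inactive} generators landing on that side, and these are nontrivial precisely for the indices the paper collects in the sets $S_2$ and $T_1$. They vanish in the model case $b_i=a_i\restr{T_n}$ (which is why your heuristic checks out there), but nothing forces them to vanish in general. Consequently, after your normalization the ``surviving'' identity $w_0=\id$ need not contain one element conjugate to each $a_m\restr{T_{n-1}}$, so it is not an instance of $*_{n-1}$, and the single application of the inductive hypothesis is not available. You correctly identified this coupling as the delicate core, but the mechanism you propose does not resolve it.

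The paper's proof avoids the impossible normalization: it standardizes only $b_1$, writing $b_1=\gamma a_1\gamma^{-1}$ with $\gamma=(\gamma_0,\gamma_1)$, and stores the resulting asymmetry in the twist $\gamma_0\gamma_1^{-1}$; wrong-side sections of inactive generators are transported across by conjugating with this twist (equation~\eqref{ci}); and no normal form for $b_s$ is attempted at all. Instead $c_{s-1}$ is \emph{defined} as the inverse of the resulting ordered product (equations~\eqref{cs} and~\eqref{cs-1}), so the level-$(n-1)$ product relation holds by construction, and the needed conjugacy $c_{s-1}\sim a_{s-1}\restr{T_{n-1}}$ is \emph{derived} from $b_s\sim a_s\restr{T_n}$ via Lemma~\ref{lem:conj}(1). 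Correspondingly, the lift $\phi=(\beta,\,\gamma_1\gamma_0^{-1}\beta)$ is not diagonal --- the twist sits in its second coordinate --- and the index $i=s$ requires a separate argument your sketch omits: one shows $\phi^{-1}b_s^{-1}\phi=(g_0,g_1)\sigma\in G_n$, verifies by an explicit computation that $(\id,g_1)\in G_n$, and uses the factorization $(g_0,g_1)\sigma=(\id,g_1)(g_0g_1,\id)\sigma(\id,g_1^{-1})$ of equation~\eqref{g01} together with $g_0g_1=x_{s-1}a_{s-1}^{-1}x_{s-1}^{-1}$ to produce a $G_n$-conjugator. If you replace your simultaneous standardization with this ``define $c_{s-1}$ by the relation and derive its conjugacy class'' device, and add the $i=s$ case to your lifting step, the rest of your outline goes through.
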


We denote this statement by $*_n$.

\begin{proof}
We proceed by induction on $n$.

The base case $n = 1$ is trivial. Assume $n \geq 2$, and that $*_{n-1}$ holds.

\textbf{Step 1:} Construct $c_1, \ldots, c_r \in \Omega_{n-1}$ such that $c_i \sim a_i \restr{T_{n-1}}$ and some product of the $c_i$ equals the identity.

Since $b_1$ is conjugate to $a_1 \restr{T_n}$, there exists $\gamma \in \Omega_n$ such that $b_1 = \gamma a_1 \gamma^{-1}$. If $\gamma \restr{T_1} \neq \id$, replace $\gamma$ with $\gamma' := \gamma a_1$. Then $\gamma' \restr{T_1} = \id$, and $b_1 = \gamma' a_1 \gamma'^{-1}$ still holds. So we may assume $\gamma = (\gamma_0, \gamma_1)$, and compute:
\begin{equation}\label{b1}
\begin{split}
b_1 &= \gamma a_1 \gamma^{-1} = (\gamma_0, \gamma_1)(a_r, \id)\sigma(\gamma_0^{-1}, \gamma_1^{-1}) \\
    &= (\gamma_0 a_r \gamma_1^{-1}, \gamma_1 \gamma_0^{-1})\sigma.
\end{split}
\end{equation}

Set
\[
c_r := \gamma_0 a_r \gamma_0^{-1} \in \Omega_{n-1}.
\]

We now organize the other $b_i$ according to their form. For $i \neq 1, s$, by Lemma~\ref{lem:conj}, $b_i$ must be of the form either $(d_{i-1}, \id)$ or $(\id, d_{i-1})$, with $d_{i-1} \sim a_{i-1} \restr{T_{n-1}}$.

Let
\[
S := \{2, \ldots, s-1\}, \quad T := \{s+1, \ldots, r\},
\]
and define subsets
\[
S_1 := \left\{ i \in S \mid b_i = (d_{i-1}, \id) \right\}, \quad T_1 := \left\{ i \in T \mid b_i = (\id, d_{i-1}) \right\}.
\]
Let $S_2 := S \setminus S_1$ and $T_2 := T \setminus T_1$.

Define $c_{i-1} \in \Omega_{n-1}$ for $i \neq 1, s$ by:
\begin{equation}\label{ci}
c_{i-1} := 
\begin{cases}
(\gamma_0 \gamma_1^{-1}) d_{i-1} (\gamma_0 \gamma_1^{-1})^{-1} & \text{if } i \in T_1 \cup S_2, \\
d_{i-1} & \text{if } i \in S_1 \cup T_2.
\end{cases}
\end{equation}

Assume without loss of generality that $\tau = \id$ (the general case is similar). Using the identity $b_1\ldots b_r=\id$ and \eqref{ci}, we can write 
\begin{align}\label{cs} 
\begin{split}
b_s^{-1}&=b_{s+1}\ldots b_rb_1\ldots b_{s-1}  \\
            &= \Big(\prod_{i \in T_2}c_{i-1}c_r \prod_{j \in S_2}c_{j-1}(\gamma_0\gamma_1^{-1}), (\gamma_0\gamma_1^{-1})^{-1}\prod_{k\in T_1}c_{k-1}\prod_{m \in S_1}c_{m-1}\Big)\sigma \\
\end{split}
\end{align}

Using the defined $c_i$ and \eqref{ci}, we set:
\begin{equation}\label{cs-1}
c_{s-1} := \left( \prod_{i \in T_2} c_{i-1} \cdot c_r \cdot \prod_{j \in S_2} c_{j-1} \cdot \prod_{k \in T_1} c_{k-1} \cdot \prod_{m \in S_1} c_{m-1} \right)^{-1}.
\end{equation}

This ensures that the product of $c_1, \dots ,c_r $ in some order equals identity. Since $b_s$ is conjugate to $a_s \restr{T_n}$, Lemma~\ref{lem:conj} implies that $c_{s-1} \sim a_{s-1} \restr{T_{n-1}}$.

By the induction hypothesis $*_{n-1}$, there exist $\beta \in \Omega_{n-1}$ and $x_i \in G_{n-1}$ such that:
\begin{equation}\label{ind}
c_i = (\beta x_i) a_i \restr{T_{n-1}} (\beta x_i)^{-1}.
\end{equation}

\textbf{Step 2:} Lift the conjugacies from level $n-1$ to level $n$.

Define $\phi = (\phi_0, \phi_1) \in \Omega_n$ by:
\[
\phi_0 := \beta, \quad \phi_1 := \gamma_1 \gamma_0^{-1} \beta.
\]

We now verify that $\phi$ conjugates each $b_i$ to $a_i$ up to an inner automorphism from $G_n$.

**Case 1: $i = 1$.**

Using \eqref{b1} and \eqref{ind}:
\[
\phi^{-1} b_1 \phi = (\beta^{-1} c_r \beta, \id) \sigma = (x_r a_r x_r^{-1}, \id) \sigma = (x_r, x_r) a_1 (x_r, x_r)^{-1}.
\]

By Lemma~\ref{(g,g)}, $y_1 := (x_r, x_r) \in G_n$ .

**Case 2:  $i \neq 1,s$.**

Following similar calculations using \eqref{ci} and \eqref{ind}, set:
\[
y_i := (x_{i-1}, x_{i-1}) \in G_n.
\]
A straightforward calculation shows that
\[
b_i = (\phi y_i) a_i (\phi y_i)^{-1}, \quad y_i := (x_{i-1}, x_{i-1}) \in G_n.
\]

**Case 3: $i = s$.**

We are left to find $y_s \in G_n$ such that
\[
\phi^{-1} b_s \phi = y_s a_s y_s^{-1} \restr{T_n}.
\]
Since $b_s^{-1} = b_{s+1} \cdots b_r \cdot b_1 \cdots b_{s-1}$, and $\phi^{-1} b_i \phi = y_i a_i y_i^{-1} \in G_n$ for all $i \neq s$, it follows that $\phi^{-1} b_s^{-1} \phi \in G_n$.

Suppose
\[
\phi^{-1} b_s^{-1} \phi = (g_0, g_1) \sigma \quad \text{for some } g_0, g_1 \in G_{n-1}.
\]
Then
\begin{equation}\label{g,h}
b_s^{-1} = (\phi_0 g_0 \phi_1^{-1}, \phi_1 g_1 \phi_0^{-1}) \sigma.
\end{equation}

From Equation~\eqref{cs}, we know
\begin{align*}
g_1 &= \phi_1^{-1} (\gamma_0 \gamma_1^{-1})^{-1} \prod_{k \in T_1} c_{k-1} \prod_{m \in S_1} c_{m-1} \phi_0 \\
&= \beta^{-1} \left( \prod_{k \in T_1} c_{k-1} \prod_{m \in S_1} c_{m-1} \right) \beta \\
&= \left(\prod_{k \in T_1} x_{k-1} a_{k-1} x_{k-1}^{-1} \right) \cdot \left(\prod_{m \in S_1} x_{m-1} a_{m-1} x_{m-1}^{-1}\right).
\end{align*}

Moreover,
\begin{align*}
c_{s-1} &= (\phi_0 g_0 \phi_1^{-1} \cdot \phi_1 g_1 \phi_0^{-1})^{-1} \\
&= \beta (g_0 g_1)^{-1} \beta^{-1},
\end{align*}
so using Equation~\eqref{ind} for $i = s-1$, we get
\begin{equation}\label{eq:g0g1}
g_0 g_1 = x_{s-1} a_{s-1}^{-1} x_{s-1}^{-1}.
\end{equation}

We now decompose $(g_0, g_1) \sigma$ as follows:
\begin{equation}\label{g01}
(g_0, g_1) \sigma = (\id, g_1)(g_0 g_1, \id) \sigma (\id, g_1^{-1}).
\end{equation}

We next show that $(\id, g_1) \in G_n$. Since for $i \in T_1$, $a_i = (a_{i-1}, \id)$, and for $j \in S_1$, $a_j = (\id, a_{j-1})$, we compute:
\begin{align*}
(g_1, \id)
&= \left(\prod_{i \in T_1} (x_{i-1} a_{i-1} x_{i-1}^{-1}, \id)\right) \cdot\left( \prod_{j \in S_1} (x_{j-1} a_{j-1} x_{j-1}^{-1}, \id) \right)\\
&= \left(\prod_{i \in T_1} (y_i a_i y_i^{-1} )\right)\cdot a_s \cdot \left(  \prod_{j \in S_1} (y_j a_j y_j^{-1} a_s^{-1} )\right)\in G_n.
\end{align*}

Conjugating $(g_1, \id)$ by $a_1 \restr{T_n}$, we conclude that $(\id, g_1) \in G_n$. By equation~\ref{g01}, it is enough to show that $(g_0g_1,\id)\sigma$ is conjugate to $a_s^{-1}$ by an element of $G$.
Since $g_0g_1=x_{s-1}a_{s-1}^{-1}x_{s-1}^{-1}$, we find that element easily.
\[ (g_0g_1,\id)\sigma=(x_{s-1},x_{s-1})a_s^{-1}(x_{s-1}^{-1}, x_{s-1}^{-1}). \]
 
This completes the verification for $i = s$, and thus the proof of the proposition.
			
\end{proof}

 \section{Densely Settled Property of $G$} \label{sec:dsettled}

\begin{lemma}\label{odo:perm}
Let $g_1, \ldots, g_k$ be automorphisms of $T$. The product $g_1 \cdots g_k$ is an odometer if and only if $g_{\tau(1)}^{\ell_1} \cdots g_{\tau(k)}^{\ell_k}$ is an odometer for any permutation $\tau \in S_k$ and any $\ell_1, \ldots, \ell_k \in \ZZ_2^\times$.
\end{lemma}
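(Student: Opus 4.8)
The plan is to reduce the entire statement to the sign characterization of odometers in Proposition~\ref{pink:odometer}, and then exploit that each level map $\sgn_n \colon \Omega \to \{\pm 1\}$ is a homomorphism into an \emph{abelian} group. By Proposition~\ref{pink:odometer}, an element $\gamma \in \Omega$ is an odometer precisely when $\sgn_n(\gamma) = -1$ for every $n \geq 1$. Hence it suffices to prove that
\[
\sgn_n\!\left(g_{\tau(1)}^{\ell_1} \cdots g_{\tau(k)}^{\ell_k}\right) = \sgn_n(g_1 \cdots g_k)
\]
for all $n \geq 1$, all $\tau \in S_k$, and all $\ell_1, \ldots, \ell_k \in \ZZ_2^\times$; once this is known, the left-hand product is an odometer exactly when the right-hand one is.

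The key point, which I would isolate as the first step, is that taking an odd $2$-adic power does not change the sign at any level: for $\gamma \in \Omega$ and $\ell \in \ZZ_2^\times$ I claim $\sgn_n(\gamma^\ell) = \sgn_n(\gamma)$. Write $\ell = (\ell_m)_{m \geq 1}$ with $\ell_m \in \ZZ$; since $\ell$ is a unit, every $\ell_m$ is odd. Recall that $\gamma^\ell$ is defined as the limit of the Cauchy sequence $(\gamma^{\ell_m})_{m \geq 1}$. As $\sgn_n$ is a continuous homomorphism to the discrete group $\{\pm 1\}$,
\[
\sgn_n(\gamma^\ell) = \lim_{m \to \infty} \sgn_n(\gamma^{\ell_m}) = \lim_{m \to \infty} \sgn_n(\gamma)^{\ell_m}.
\]
If $\sgn_n(\gamma) = 1$ this limit is $1$, and if $\sgn_n(\gamma) = -1$ then $\sgn_n(\gamma)^{\ell_m} = (-1)^{\ell_m} = -1$ for every $m$ because $\ell_m$ is odd; in either case the value equals $\sgn_n(\gamma)$, proving the claim.

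With this in hand the rest is formal. Since $\sgn_n$ is a homomorphism and $\{\pm 1\}$ is abelian, for any $\tau \in S_k$,
\[
\sgn_n\!\left(g_{\tau(1)}^{\ell_1} \cdots g_{\tau(k)}^{\ell_k}\right) = \prod_{i=1}^{k} \sgn_n\!\left(g_{\tau(i)}^{\ell_i}\right) = \prod_{i=1}^{k} \sgn_n\!\left(g_{\tau(i)}\right) = \prod_{i=1}^{k} \sgn_n(g_i) = \sgn_n(g_1 \cdots g_k),
\]
where the second equality uses the odd-power claim and the third uses commutativity of $\{\pm 1\}$ to absorb the permutation. This holds for every $n$, which by Proposition~\ref{pink:odometer} gives the desired equivalence. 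I expect the only subtle step to be the treatment of the $2$-adic powers $\gamma^\ell$: one must invoke the continuity of $\sgn_n$ together with the fact that units of $\ZZ_2$ are $2$-adically odd, so that each approximating exponent $\ell_m$ is an odd integer. Everything after that is a one-line manipulation of signs.
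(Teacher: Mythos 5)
Your proposal is correct and follows essentially the same route as the paper: both reduce the statement to Proposition~\ref{pink:odometer}, use the abelianness of $\{\pm 1\}$ to absorb the permutation, and use continuity of $\sgn_n$ together with the oddness of the approximating integer exponents to show $\sgn_n(g^\ell) = \sgn_n(g)$ for $\ell \in \ZZ_2^\times$. Your write-up simply spells out the Cauchy-sequence limit argument that the paper leaves implicit.
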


\begin{proof}
By Proposition~\ref{pink:odometer}, $g_1 \cdots g_k$ is an odometer if and only if $\sgn_n(g_1 \cdots g_k) = -1$ for all $n \geq 1$. Since the image of $\sgn_n$ lies in the abelian group $\{\pm1\}$, permuting the product does not affect the sign. 

Moreover, for any $\ell \in \ZZ_2^\times$, we have $\ell \equiv 1 \pmod{2}$, and the continuity of $\sgn_n$ implies that
\[ 
\sgn_n(g^\ell) = \sgn_n(g)^\ell = \sgn_n(g),
\]
for all $g \in \Omega$ and $n \geq 1$. The result then follows from Proposition~\ref{pink:odometer}.
\end{proof}

The formula in Equation~\eqref{sgn:sigma} can be used to compute the signs of recursively defined elements. We now compute the signs of the generators of $G$.

\begin{lemma}\label{sgn:gen}
For $n \geq 1$, we have
\[
\sgn_n(a_i) =
\begin{cases}
-1 & \text{if } n \equiv i \text{ or } i + 1 - s \pmod{r}, \\
1 & \text{otherwise,}
\end{cases}
\]
for all $i = 1, \ldots, r$.
\end{lemma}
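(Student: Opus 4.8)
The plan is to reduce everything to the two defining facts about the sign map: that $\sgn_n \colon \Omega \to \{\pm 1\}$ is a homomorphism and that $\sgn_n(\sigma)$ is given by \eqref{sgn:sigma}. First I would record how $\sgn_n$ interacts with the semidirect product decomposition. For $\gamma = (\gamma_0,\gamma_1)$ the induced permutation on level $n$ splits as two disjoint permutations of the level-$(n-1)$ subtrees, so $\sgn_n((\gamma_0,\gamma_1)) = \sgn_{n-1}(\gamma_0)\,\sgn_{n-1}(\gamma_1)$ for $n \geq 2$, while $\sgn_1((\gamma_0,\gamma_1)) = 1$. Writing $(\gamma_0,\gamma_1)\sigma = (\gamma_0,\gamma_1)\cdot\sigma$ and combining multiplicativity of $\sgn_n$ with \eqref{sgn:sigma} gives $\sgn_n((\gamma_0,\gamma_1)\sigma) = \sgn_{n-1}(\gamma_0)\,\sgn_{n-1}(\gamma_1)$ for $n \geq 2$ and $\sgn_1((\gamma_0,\gamma_1)\sigma) = -1$.

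Next I would apply these two rules to the generators defined in \eqref{generators}. Since $\sgn_{n-1}(\id)=1$, every generator satisfies the same recursion off the first level: for all $1 \leq i \leq r$ and all $n \geq 2$,
\[
\sgn_n(a_i) = \sgn_{n-1}(a_{i-1}),
\]
where indices are read modulo $r$ — the crucial point being that the relation $a_1 = (a_r,\id)\sigma$ supplies the wraparound $\sgn_n(a_1) = \sgn_{n-1}(a_r)$, so that $a_0 = a_r$ makes the recursion genuinely uniform across all $i$. The extra factor $\sigma$ appearing only in $a_1$ and $a_s$ is invisible for $n \geq 2$ and influences solely the base level: $\sgn_1(a_1) = \sgn_1(a_s) = -1$, while $\sgn_1(a_i) = 1$ for every other $i$.

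Finally I would iterate this recursion $n-1$ times down to the first level, obtaining $\sgn_n(a_i) = \sgn_1(a_{i-n+1})$ with the index taken modulo $r$. By the base-case computation this equals $-1$ exactly when $i-n+1 \equiv 1$ or $i-n+1 \equiv s \pmod r$, that is, when $n \equiv i$ or $n \equiv i+1-s \pmod r$, which is precisely the asserted formula.

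The argument is essentially mechanical once the two sign recursions are in hand, so I do not expect a substantial obstacle; the only delicate point is the modular index bookkeeping. Concretely, one must keep track of the cyclic wraparound forced by $a_0 = a_r$, and confirm that the two $\sigma$-carrying generators $a_1$ and $a_s$ are responsible for exactly the two residue classes $n \equiv i$ and $n \equiv i+1-s$ in the conclusion; conflating these, or shifting by one in the unwinding, is the likeliest source of error.
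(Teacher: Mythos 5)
Your proposal is correct and takes essentially the same approach as the paper's proof: both use multiplicativity of $\sgn_n$ together with \eqref{sgn:sigma} to obtain the recursion $\sgn_n(a_i) = \sgn_{n-1}(a_{i-1})$ (indices modulo $r$, with the wraparound $a_0 = a_r$ coming from $a_1 = (a_r,\id)\sigma$), unwind it to $\sgn_n(a_i) = \sgn_1(a_{i-n+1})$, and evaluate the base case $\sgn_1(a_j) = -1$ exactly for $j \equiv 1$ or $s \pmod{r}$. Your write-up is merely a bit more explicit than the paper about the level-one case and the two $\sigma$-carrying generators, but the argument is the same.
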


\begin{proof}
Let $\gamma = (\gamma_0, \gamma_1) \tau \in \Omega$ and fix $1 \leq i \leq r$. By Equation~\eqref{sgn:sigma} and the homomorphism property of $\sgn_n$, we have
\[ 
\sgn_n(\gamma) = \sgn_{n-1}(\gamma_0) \cdot \sgn_{n-1}(\gamma_1), \quad \text{for all } n > 1.
\]

Using this relation recursively, we find:
\[ 
\sgn_n(a_i) = \sgn_1(a_{i - n + 1}).
\]

From Equation~\eqref{sgn:sigma}, we know that $\sgn_1(a_j) = -1$ if and only if $j \equiv 1 \pmod{r}$ or $j \equiv s \pmod{r}$. Therefore,
\[ 
\sgn_n(a_i) = -1 \iff i - n + 1 \equiv 1 \text{ or } s \pmod{r},
\]
which is equivalent to the condition that $n \equiv i$ or $n \equiv i + 1 - s \pmod{r}$. The result follows.
\end{proof}

When $f(x)$ is a polynomial, the inertia generator at infinity gives an odometer in the geometric IMG of $f$, since the ramification index equals the degree at infinity. However, this is not guaranteed for non-polynomial maps. We now show that the existence of odometers in $G$ depends on the parity of $r$ and $s$, together with a divisibility condition.

For any $m \geq 1$, define the map
\begin{align*}
\sgn^m \colon \Omega &\to \{\pm 1\}^m \\
\alpha &\mapsto (\sgn_j(\alpha\restr{T_j}))_{j=1}^m.
\end{align*}

The formulas given by Lemma~\ref{sgn:gen} show that $\sgn_n(a_i)$ are determined modulo $r$. To determine whether an element $\gamma \in G$ is an odometer, it is enough to check whether $\sgn^r(\gamma) = (-1, \ldots, -1)$ in $\{\pm 1\}^r$.
\begin{theorem}\label{odoG}
 The group $G$ contains odometers if and only if either $r$ and $s$ are both even or $s$ is odd and $\frac{r}{\gcd(r,s-1)}$ is even.
\end{theorem}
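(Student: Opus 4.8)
The plan is to reduce the statement to a membership question over $\FF_2$ and then read off the answer from the orbit structure of an explicit permutation of $\ZZ/r\ZZ$.

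First I would reduce ``contains an odometer'' to a question about the sign homomorphism. By Proposition~\ref{pink:odometer} an element $\gamma$ is an odometer exactly when $\sgn_n(\gamma)=-1$ for all $n\geq 1$. Lemma~\ref{sgn:gen} shows that $\sgn_n(a_i)$ depends only on $n$ modulo $r$, and since each $\sgn_n$ is a homomorphism into the abelian group $\{\pm1\}$ (so $\sgn_n(g^{-1})=\sgn_n(g)$), the value $\sgn_n(g)$ for every $g$ in the abstract group generated by the $a_i$ is likewise periodic in $n$ modulo $r$; by continuity of $\sgn_n$ into the finite discrete group $\{\pm1\}$ the same holds for all $g\in G$. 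Consequently $g\in G$ is an odometer iff $\sgn^r(g)=(-1,\dots,-1)$, as noted before the statement. Identifying $\{\pm1\}\cong\FF_2$ additively and using continuity again, the image $\sgn^r(G)$ equals the $\FF_2$-span of the vectors $v_i:=\sgn^r(a_i)$. Thus the theorem becomes: the all-ones vector $\mathbf 1$ lies in $\mathrm{span}_{\FF_2}\{v_1,\dots,v_r\}$ precisely under the stated conditions.

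Next I would compute this span. Writing the $r$ coordinates as $\ZZ/r\ZZ$ with standard basis $e_j$, Lemma~\ref{sgn:gen} gives $v_i=e_i+e_{i+1-s}$. Put $d:=s-1$ and let $\psi\colon \ZZ/r\ZZ\to\ZZ/r\ZZ$ be $\psi(j)=j-d$, so $v_i=e_i+e_{\psi(i)}$ is the ``edge vector'' joining $i$ to $\psi(i)$. Since $1\le d\le r-1$, the permutation $\psi$ has no fixed points and partitions $\ZZ/r\ZZ$ into $\gcd(r,d)$ orbits, each of length $\ell:=r/\gcd(r,d)\geq 2$. On a fixed orbit $O$ the vectors $\{v_i:i\in O\}$ are the edge vectors of a single $\ell$-cycle; their partial sums telescope to all $e_o+e_{o'}$ with $o,o'\in O$, so they span the even-weight subspace $W_O=\{x\in\FF_2^{O}:\sum_{j\in O}x_j=0\}$, and nothing more since each $v_i$ has even weight. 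As distinct orbits involve disjoint coordinates, $\mathrm{span}_{\FF_2}\{v_i\}=\bigoplus_O W_O$. The membership of $\mathbf 1$ is then immediate: its restriction to $O$ has weight $\ell$, which lies in $W_O$ iff $\ell$ is even. Hence $\mathbf 1\in\mathrm{span}_{\FF_2}\{v_i\}$ iff $r/\gcd(r,s-1)$ is even, and in that case $g=\prod_{i\in I}a_i$ is an explicit odometer for any $I\subseteq\{1,\dots,r\}$ with $\sum_{i\in I}v_i=\mathbf 1$ (Lemma~\ref{odo:perm} confirms the order of factors and any $\ZZ_2^\times$-powers are irrelevant).

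It then remains to match ``$r/\gcd(r,s-1)$ even'' with the two cases in the statement, a short parity check: if $s$ is odd then $d=s-1$ is even and the condition is exactly the second case; if $s$ is even then $d$ is odd, so $\gcd(r,d)$ is odd and $r/\gcd(r,d)$ is even iff $r$ is even, which is the first case. I expect the only real content to be the span computation---recognizing the $v_i$ as the edge vectors of the cycles of $\psi$ and identifying $\mathrm{span}_{\FF_2}\{v_i\}$ with $\bigoplus_O W_O$; once this structural picture is in place the parity criterion and its translation into the two stated cases are routine. The one point to treat carefully is the degenerate orbit of length $\ell=2$, where the two edge vectors coincide, but this remains consistent with $\dim W_O=\ell-1=1$.
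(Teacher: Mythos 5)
Your proposal is correct, and the individual steps check out: the continuity argument legitimately upgrades the mod-$r$ periodicity of $\sgn_n$ from words in the $a_i$ to all of $G$; the identification of $\sgn^r(G)$ with the $\FF_2$-span of the $v_i$ is valid since the image of a compact group under a continuous homomorphism to a finite group equals the image of any dense subgroup; and the span computation, including the degenerate $\ell=2$ orbit, and the final parity translation are all accurate. Your route is a genuine repackaging of the paper's proof of Theorem~\ref{odoG}, which shares your reduction to $\sgn^r$ via Proposition~\ref{pink:odometer} and Lemma~\ref{sgn:gen} but then splits into three cases: for $r$ odd it uses exactly your even-weight obstruction; for $r,s$ both even it exhibits $a_1a_3\cdots a_{r-1}$ and verifies odometer-hood by a direct inductive computation of the orders of restrictions, bypassing signs entirely; and for $s$ odd it runs a combinatorial matching argument on the orbits of $i\mapsto i+1-s$, which is your telescoping argument in disguise. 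What your single span computation buys is uniformity and a cleaner statement --- $G$ contains odometers if and only if $r/\gcd(r,s-1)$ is even, with the theorem's two cases falling out as a parity check --- and it subsumes the paper's $s$-even construction (your criterion reproduces the element $a_1a_3\cdots a_{r-1}$) while avoiding the inductive order calculation. What the paper's case analysis buys in exchange is an explicit, sign-free verification of the cycle structure in the $s$-even case, but given Proposition~\ref{pink:odometer} that extra computation is logically dispensable, so your argument is a legitimate and arguably tidier proof.
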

\begin{proof}

Note that $\sgn_n(\gamma)$ depends only on the action of $\gamma$ on level $n$, and since $G_n$ is generated by $a_1\restr{T_n}, \ldots, a_n\restr{T_n}$, we may assume that $\gamma = a_1^{\ell_1} \cdots a_r^{\ell_r}$ for some $\ell_i \in \ZZ_2$.

Assume first that $r$ is odd and let $\gamma \in \Omega$ be an odometer. Then $\sgn_n(\gamma) = -1$ for all $n \geq 1$ by Proposition~\ref{pink:odometer}. But by Lemma~\ref{sgn:gen}, each $\sgn^r(a_i)$ has exactly two entries equal to $-1$. Any product of such vectors in $\{\pm 1\}^r$ will then have an even number of entries equal to $-1$, so $(-1, \ldots, -1)$ cannot be obtained. Hence $G$ contains no odometer if $r$ is odd.

Now assume $r$ is even. We consider two cases based on the parity of $s$.

\medskip
\noindent\textbf{Case 1:} $s$ even. We claim that $\gamma = a_1 a_3 \cdots a_{r-1}$ is an odometer. 
We will prove by induction on $n$ that $\gamma\restr{T_n}$ has order $2^n$. We compute:
\begin{align*}
\gamma^2 &= \left[(a_r, \id) \sigma (\id, a_2) \cdots (\id, a_{s-2})(a_s, \id) \cdots (a_{r-2}, \id)\right]^2 \\
&= [(a_r a_2 \cdots a_{s-2}, a_s \cdots a_{r-2}) \sigma]^2 \\
&= (a_r a_2 \cdots a_{s-2} a_s \cdots a_{r-2}, a_s \cdots a_{r-2} a_r a_2 \cdots a_{s-2}).
\end{align*}
Note that for any $\alpha,\beta$ in a group, $\alpha\beta$ and $\beta \alpha$ are conjugate and hence they have the same order. 
Thus, the order of $\gamma\restr{T_n}$ is twice the order of $a_r a_2 \cdots a_{r-2} \restr{T_{n-1}}$.

Next, we compute:
\begin{align*}
(a_r a_2 \cdots a_{r-2})^2 \restr{T_{n-1}} &= \left[(a_{r-1}, \id) \cdots (a_3, \id) \cdots (a_{r-1}, \id) \sigma\right]^2 \\
&= (a_1 a_3 \cdots a_{r-1}, a_1 a_3 \cdots a_{r-1}) \\
&= (\gamma\restr{T_{n-2}}, \gamma\restr{T_{n-2}}).
\end{align*}
So the order of $\gamma\restr{T_n}$ is 4 times that of $\gamma\restr{T_{n-2}}$. Since $\gamma\restr{T_1}$ has order 2, the claim follows by induction.

\medskip
\noindent\textbf{Case 2:} $s$ odd.

Let $\gamma \in G$ be an odometer. Then $\sgn^r(\gamma) = (-1, \ldots, -1)$. Again, we may assume that $\gamma$ is a product of some subset $S \subset \{1, \ldots, r\}$ of the generators:
\[ \gamma = \prod_{i \in S} a_i. \]
Let $S = \{m_1, \ldots, m_k\}$ and define $v_i := \sgn^r(a_{m_i})$. Each $v_i$ has exactly two $-1$ entries. So at least $r/2$ such vectors are needed to produce $(-1, \ldots, -1)$. On the other hand, since $\prod_{i=1}^r a_i = \id$, we have $\sgn^r(\gamma) = \sgn^r(\prod_{i \notin S} a_i)$, which implies $|S| \leq r/2$. Hence, $|S| = r/2$.

Next, each $v_{j}$ has $-1$ entries in positions $m_j$ and $m_j + 1 - s \mod r$. Since the collection $\{v_{j}\}_{j=1}^{r/2}$ yields $(-1, \ldots, -1)$ and the sets $\{m_j, m_j + 1 - s\}$ are disjoint, it follows that the union of these sets is $\{1, \ldots, r\}$.

Define the permutation $\tau\colon i \mapsto i + 1 - s \mod r$ in $S_r$. Its orbits have length $\ell = \frac{r}{\gcd(r, s-1)}$. Since the sets $\{m_i, m_i + 1 - s\}$ are pairwise disjoint and contained in the orbits of $\tau$, each such orbit must be of even length. Thus, $\ell$ must be even.

Conversely, suppose $s$ is odd and $\ell = \frac{r}{\gcd(r, s-1)}$ is even. Let $\tau$ be as above. In each orbit of $\tau$ (which has even length $\ell$), select every other element starting with the first, i.e., $m_1, m_3, \ldots, m_{\ell - 1}$. Let $T$ be the union of all such selections over each orbit.

By construction, $\{1, \ldots, r\}$ is the disjoint union of the pairs $\{i, i + 1 - s\}$ for $i \in T$. Since each $a_j$ has exactly two $-1$ signs and these locations are distinct for different $j \in T$, we obtain:
\[ \prod_{j \in T} \sgn^r(a_j) = (-1, \ldots, -1), \]
so $\prod_{j \in T} a_j$ is an odometer.

 \end{proof}
\begin{example}
Let $r = 8$ and $s = 7$. Consider the permutation $\tau \colon i \mapsto i + 2 \pmod{8}$ on the set $\{1, \ldots, 8\}$. Then $\tau$ can be written as the product of disjoint cycles:
\[
\tau = (1\ 3\ 5\ 7)(2\ 4\ 6\ 8)
\]
in the symmetric group $S_8$.

Let $\gamma = a_1 a_5 a_2 a_6 \in G$. We compute the sign vectors:
\[
\sgn^8(a_1) = (-1, 1, -1, 1, 1, 1, 1, 1), \quad
\sgn^8(a_2) = (1, -1, 1, -1, 1, 1, 1, 1),
\]
\[
\sgn^8(a_5) = (1, 1, 1, 1, -1, 1, -1, 1), \quad
\sgn^8(a_6) = (1, 1, 1, 1, 1, -1, 1, -1).
\]
Taking the product of these vectors coordinate-wise, we find:
\[
\sgn^8(\gamma) = (-1, -1, -1, -1, -1, -1, -1, -1).
\]
Hence, $\gamma$ is an odometer.
\end{example}
    
\subsection{Settled Elements in $G$}\label{def:settled} \ \\

In this section, we study the settled elements in $G$, motivated by the conjecture of Boston and Jones \cite{BJ-dedicata, BJ-quarterly, BJ-pams} and a question of Jones and Levy \cite{JonesLevy}. Boston and Jones introduced the notion of settledness to describe the behavior of Frobenius elements in arboreal representations of quadratic polynomials.

\begin{definition}\label{defn:settled}
Let $\gamma$ be an automorphism of the tree $T$. Let $n \geq 1$, and let $v$ be a vertex of $T$ at level $n$. We say $v$ is in a \emph{stable cycle of length} $k \geq 1$ of $\gamma$ if:
\begin{enumerate}
\item the orbit $\{(v)\gamma^i \mid i\geq 0\}$ has $k$ distinct elements.
\item for every $m > n$, all vertices in $T_m$ lying above $\{(v)\gamma^j \mid j \geq 0\}$ are in the same cycle of length $2^{m-n}k$.
\end{enumerate}

We say $\gamma \in \Omega$ is \emph{settled} if
\[
\lim_{n \to \infty} \frac{|\{ v \in V_n \mid v \text{ is in a stable cycle of } \gamma \}|}{|V_n|} = 1.
\]

A subgroup $H$ of $\Omega$ is called \emph{densely settled} if the set of settled elements is dense in $H$ under the profinite topology.
\end{definition}

Let $\gamma$ be an odometer. Since $\gamma\restr{T_n}$ is a $2^n$-cycle at each level $n$, every vertex $v$ of $T$ is in a stable cycle of $\gamma$, and hence $\gamma$ is settled.

More examples of settled elements can be constructed recursively. The following example is from \cite{CortezLukina}: consider the recursively defined element $\beta = (\beta, \gamma)$, where $\gamma = (\gamma, \id) \sigma$. The automorphism $\beta$ is settled and has exactly one fixed point. See \cite{CortezLukina} for additional examples of this type.

Roughly speaking, a settled element is formed by a collection of odometers in all levels of the tree. We formalize this observation in the following result.

\begin{proposition}\label{sodo}
Let $\gamma \in \Omega$. Assume that the vertex $w \in V_n$ is in a stable cycle of $\gamma$ of length $k \geq 1$. Then the section of $\gamma^k$ at each vertex $(w)\gamma^i$, for $i = 0, \ldots, k-1$, is an odometer.
\end{proposition}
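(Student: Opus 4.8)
The plan is to reduce the statement to the single vertex $w$ (the case $i = 0$) and then to read off the cycle structure of the section $(\gamma^k)_w$ directly from condition (2) of the stable-cycle definition. First I would record that, since the orbit of $w$ has exactly $k$ elements, we have $(w)\gamma^k = w$; hence $\gamma^k$ fixes $w$ and the section $(\gamma^k)_w$ is a genuine automorphism of $T$. Applying the product rule for sections $(\gamma\gamma')_x = \gamma_x\gamma'_{(x)\gamma}$ inductively gives
\[
(\gamma^k)_{(w)\gamma^i} = \gamma_{(w)\gamma^i}\,\gamma_{(w)\gamma^{i+1}} \cdots \gamma_{(w)\gamma^{i+k-1}},
\]
with indices read modulo $k$ (using $(w)\gamma^k = w$). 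The factors occurring for different $i$ are cyclic permutations of one another, so by Lemma~\ref{odo:perm} it suffices to prove that $(\gamma^k)_w$ is an odometer.

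To do that I would fix a level $m > n$, set $\ell := m - n$, and consider the set $D$ of all vertices of $T_m$ lying above the orbit $\{w, (w)\gamma, \ldots, (w)\gamma^{k-1}\}$. Partition $D = D_0 \cup \cdots \cup D_{k-1}$, where $D_j$ is the set of the $2^\ell$ descendants of $(w)\gamma^j$ at level $m$. Since $\gamma$ sends the subtree rooted at $(w)\gamma^j$ to the subtree rooted at $(w)\gamma^{j+1}$, it maps $D_j$ bijectively onto $D_{j+1 \bmod k}$; in particular $\gamma^k$ preserves each block $D_j$. The key input is condition (2) of Definition~\ref{defn:settled}, which tells us precisely that $\gamma$ acts on $D$ as a single cycle of length $k \cdot 2^\ell$.

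The combinatorial heart of the argument is then elementary: a single cycle of length $L = k \cdot 2^\ell$ raised to the $k$th power splits into $\gcd(k,L) = k$ cycles, each of length $L/k = 2^\ell$. Because $\gamma^k$ leaves each of the $k$ blocks $D_j$ invariant and each block has size $2^\ell$, each of these $k$ cycles must coincide with exactly one block; in particular $\gamma^k$ restricts to a single $2^\ell$-cycle on $D_0$. Finally, under the canonical identification of the subtree rooted at $w$ with $T$, the set $D_0$ is exactly level $\ell$, and the restriction of $\gamma^k$ to $D_0$ is by definition the action of $(\gamma^k)_w$ at level $\ell$; this is just the section relation $(wv)\gamma^k = (w)\gamma^k\,(v)(\gamma^k)_w$ combined with $(w)\gamma^k = w$. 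Hence $(\gamma^k)_w$ acts as a $2^\ell$-cycle on level $\ell$ for every $\ell \geq 1$, which is exactly the definition of an odometer.

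I expect the only delicate point to be bookkeeping rather than depth: one must be careful that the $k$ cycles of $\gamma^k$ are distributed one per block (rather than, say, two in one block and none in another), which follows because each cycle lies in a single $\gamma^k$-invariant block and the common cycle length matches the common block size. The remaining steps — the product formula for $(\gamma^k)_w$, the reduction to $i = 0$ via Lemma~\ref{odo:perm}, and the identification of $D_0$ with level $\ell$ of $T$ — are routine once the cycle decomposition is in hand.
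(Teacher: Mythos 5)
Your proposal is correct and takes essentially the same route as the paper's proof: the same inductive section product formula $(\gamma^k)_{w_i} = \gamma_{w_i}\gamma_{w_{i+1}}\cdots\gamma_{w_{i+k-1}}$, the same reduction to $i=0$ via Lemma~\ref{odo:perm} (the factors being cyclic permutations of one another), and the same use of condition (2) of Definition~\ref{defn:settled} to conclude that $(\gamma^k)_w$ acts as a $2^{m-n}$-cycle at every level $m > n$. The only difference is that you spell out the elementary cycle-power bookkeeping --- that the $k$th power of a $k\cdot 2^\ell$-cycle splits into $\gcd(k, k\cdot 2^\ell) = k$ cycles of length $2^\ell$, one per $\gamma^k$-invariant block --- which the paper asserts in a single sentence; this is a harmless (indeed clarifying) elaboration, not a different argument.
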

\begin{proof}
Let $\gamma$ be an automorphism in $\Omega$, and let $w$ be a vertex of the tree $T$. Recall that $\gamma_w$ denotes the section of $\gamma$ at $w$. For any word $v$, we have
\[(wv) \gamma =(w) \gamma \cdot (v)\gamma_w. \]
Re-applying $\gamma$, we get:
\[ (wv) \gamma^2= ((w)\gamma \cdot (v)\gamma_w)\gamma = (w)\gamma^2 \cdot ((v)\gamma_w) \gamma_{(w)\gamma}. \]

Let $w_i := (w)\gamma^i$ for $0 \leq i < k$. Inductively, we obtain:
\[ (wv) \gamma^k= (w)\gamma^k \cdot (v)(\gamma_w\circ \gamma_{w_1} \circ   \cdots \circ  \gamma_{w_{k-1}}). \]
Thus, the section of $\gamma^k$ at $w$ is given by:
\[ (\gamma^k)_w = \gamma_w\circ \gamma_{w_1} \circ   \cdots \circ  \gamma_{w_{k-1}}. \]

Similarly, for any $0 \leq i < k$, the section of $\gamma^k$ at $w_i$ is:
\[ (\gamma^k)_{w_i} = \gamma_{w_{i}} \circ \gamma_{w_{i+1}} \cdots \circ \gamma_{w_{k+i-1}}, \]
where indices are considered modulo $k$.

Now, assume $w \in V_n$ lies in a stable cycle of length $k$ under $\gamma$. Then $(w)\gamma^k = w$, for each $i$, the collection $\{w_i, w_{i+1}, \ldots, w_{k+i-1}\}$ is equal to $\{w_0, \ldots, w_{k-1}\}$.

By Lemma~\ref{odo:perm}, it suffices to show that the section of $\gamma^k$ at $w$ is an odometer.

Since $w$ is in a stable cycle of length $k$, by definition, for every $m \geq n$, the vertices at level $m$ lying above the orbit $\{(w)\gamma^j\}$ form a cycle of length $k \cdot 2^{m-n}$ under $\gamma$. Therefore, the orbit of $\gamma^k$ containing those vertices has length $2^{m-n}$, meaning that the action of $(\gamma^k)_w$ on the tree rooted at $w$ is a $2^{m-n}$-cycle.

Since this holds for all $m \geq n$, it follows that $(\gamma^k)_w$ is an odometer.
\end{proof}

\begin{corollary}\label{cor:sodo}
Let $H$ be a self-similar subgroup of $\Omega$. If $H$ contains an automorphism with a stable cycle, then it must contain an odometer. In particular, if $H$ contains a settled element, then it contains an odometer.
\end{corollary}

\begin{proof}
Assume $\gamma \in H$ has a stable cycle. By Proposition~\ref{sodo}, a section of $\gamma^k$ is an odometer. Since $H$ is a group, $\gamma^k \in H$, and because $H$ is self-similar, every section of $\gamma^k$ also lies in $H$. Therefore, $H$ contains an odometer. If $H$ contains a settled element, then it necessarily has a stable cycle, and the result follows.
\end{proof}

\begin{theorem}\label{odoN}
If the group $G = \customlangle\customlangle a_1, \ldots, a_r \customrangle\customrangle$ does not contain any odometers, then the normalizer $N_{\Omega}(G)$ of $G$ does not contain any odometers.
\end{theorem}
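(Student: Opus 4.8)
The plan is to argue by contraposition: assuming $G$ contains no odometer, I show the same for $N:=N_\Omega(G)$ by controlling the sign sequences $(\sgn_n(\eta))_{n\ge 1}$ of normalizing elements. Recall from Proposition~\ref{pink:odometer} that $\eta$ is an odometer exactly when $\sgn_n(\eta)=-1$ for all $n$. By Lemma~\ref{sgn:gen} the signs of the generators are $r$-periodic in $n$, so on $G$ one has $\sgn_{n+r}=\sgn_n$; identifying $\{\pm 1\}$ with $\FF_2$, the image $\sgn^r(G)$ equals the $\FF_2$-span $W:=\langle \sgn^r(a_1),\dots,\sgn^r(a_r)\rangle$, and Theorem~\ref{odoG} says precisely that $G$ contains an odometer if and only if the all-$(-1)$ vector $\mathbf 1$ lies in $W$. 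The goal is to establish the identical criterion for $N$; since $\mathbf 1\notin W$ under the hypothesis, this gives the theorem.

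First I would prove a structural lemma: $N$ is self-similar. Write $\eta=(\eta_0,\eta_1)\tau\in N$. For any $h\in G$ we have $(h,h)\in G$ by Lemma~\ref{(g,g)}, and $\tau(h,h)\tau^{-1}=(h,h)$; hence by \eqref{semi} the conjugate $\eta(h,h)\eta^{-1}=(\eta_0 h\eta_0^{-1},\,\eta_1 h\eta_1^{-1})$ lies in $G$. Because $G$ is self-similar, both sections $\eta_i h\eta_i^{-1}$ lie in $G$, so $\eta_0 G\eta_0^{-1}\subseteq G$ and $\eta_1 G\eta_1^{-1}\subseteq G$; repeating the computation with $\eta^{-1}$ (whose sections are again $\eta_0^{-1},\eta_1^{-1}$ up to swapping) yields the reverse inclusions, so $\eta_0,\eta_1\in N$. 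Thus $N$ is self-similar. Combining this with the sign recursion $\sgn_{n+1}(\eta)=\sgn_n(\eta_0)\sgn_n(\eta_1)$ from the proof of Lemma~\ref{sgn:gen} already gives one useful consequence: if $\eta=(\eta_0,\eta_1)\sigma\in N$ is an odometer, then $\sgn_n(\eta_0)\sgn_n(\eta_1)=-1$ for all $n$, so the section product $\eta_0\eta_1\in N$ is again an odometer.

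The crux is to upgrade this into a confinement of the entire sign image, namely to show that the $r$-periodicity and the containment in $W$ persist for $N$: $\sgn^r(N)\subseteq W$ and $\sgn_{n+r}=\sgn_n$ on $N$, so that $N$ contains an odometer if and only if $\mathbf 1\in W$. The mechanism I would exploit is the cyclic-shift module structure underlying Theorem~\ref{odoG}. The one-step shift on sign sequences, $\mathsf s\colon (\sgn_n(\eta))_n\mapsto (\sgn_{n+1}(\eta))_n$, corresponds under self-similarity to passing from $\eta$ to $\sgn(\eta_0)\sgn(\eta_1)$, and on $W$ it restricts to the cyclic permutation $\rho$ with $\rho\,\sgn^r(a_i)=\sgn^r(a_{i-1})$; in particular $W$ is $\rho$-stable. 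Feeding the defining relations $\eta a_i\eta^{-1}\in G$ into the level-one decomposition $\eta=(\eta_0,\eta_1)\tau$ should pin down recursively which pairs $(\eta_0,\eta_1)$ assemble into $N$, and hence which sign vectors are attainable; the claim to be proved is that these are exactly the vectors of $W$, whence the parity obstruction to $\mathbf 1\in W$ transfers verbatim from $G$ to $N$.

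I expect this last step to be the main obstacle. Self-similarity of $N$ on its own yields only $\mathsf s\,\sgn(N)\subseteq\sgn(N)$ together with $\sgn(N)\supseteq\sgn(G)=W$, and these two constraints do \emph{not} exclude the all-$(-1)$ vector (they are already satisfied by $W+\FF_2\mathbf 1$). The real content therefore lies in the normalizing condition proper, i.e. in determining precisely which sections $(\eta_0,\eta_1)$ produce an element conjugating every generator back into $G$. This is governed by the branch structure of $G$ --- the companion subgroups $\{h:(h,\id)\in G\}$ and $\{h:(\id,h)\in G\}$ together with the diagonal $\{(g,g):g\in G\}\subseteq G$ of Lemma~\ref{(g,g)} --- which is exactly what the paper develops in Section~\ref{sec:structure}. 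I would therefore combine that branch description with the $\rho$-module computation of Theorem~\ref{odoG}, so that the sign-parity obstruction ruling out $\mathbf 1\in W$ for $G$ carries over to $N_\Omega(G)$.
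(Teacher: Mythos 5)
Your reduction stalls at exactly the point you flag, and the stall is fatal: everything you actually establish (self-similarity of $N$, and the fact that $\eta_0\eta_1\in N$ is again an odometer when $\eta=(\eta_0,\eta_1)\sigma\in N$ is one) keeps you inside $N$, so the descent never reaches $G$ and never terminates. Your proposed repair --- proving $\sgn^r(N)\subseteq W$ together with $r$-periodicity of the sign sequences on all of $N$ --- is left entirely unestablished: you give no mechanism for extracting it from the relations $\eta a_i \eta^{-1}\in G$, and it is in any case stronger than what the theorem requires (the paper never proves, and does not need, any confinement of the full sign image of the normalizer). The missing idea is that a \emph{single} application of the normalizing condition, to the one generator $a_1$, already lands you in $G$: if $\gamma=(\gamma_0,\gamma_1)\sigma\in N_\Omega(G)$ is an odometer, then
\[
\gamma a_1 \gamma^{-1} \;=\; (\gamma_0\gamma_1^{-1},\; \gamma_1 a_r \gamma_0^{-1})\,\sigma \;\in\; G,
\]
and by self-similarity of $G$ itself (not of $N$) the section $\gamma_0\gamma_1^{-1}$ lies in $G$. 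This one conjugation converts the unknown normalizing element into a concrete element of $G$ whose signs are controlled by those of $\gamma$.

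The sign bookkeeping you already have then closes the argument: for $n\geq 1$, $\sgn_{n+1}(\gamma)=\sgn_n(\gamma_0)\sgn_n(\gamma_1)=\sgn_n(\gamma_0\gamma_1^{-1})$, since $\sgn_n(\gamma_1^{-1})=\sgn_n(\gamma_1)$ in $\{\pm 1\}$. As $\gamma$ is an odometer, this forces $\sgn_j(\gamma_0\gamma_1^{-1})=-1$ for all $1\leq j\leq r$; and because the sign sequence of every element of $G$ is $r$-periodic (the sign vectors of the topological generators are $r$-periodic by Lemma~\ref{sgn:gen}, and $g\mapsto(\sgn_n(g))_{n\geq 1}$ is a continuous homomorphism into an abelian group, so the periodicity passes to the closed subgroup they generate --- this is exactly the observation preceding Theorem~\ref{odoG}), the element $\gamma_0\gamma_1^{-1}\in G$ is an odometer, contradicting the hypothesis on $G$. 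In short, you correctly diagnosed where the difficulty sits, but the route around it is not a global description of $\sgn(N)$ or of which pairs of sections assemble into $N$; it is the single identity $\gamma a_1\gamma^{-1}\in G$ combined with $r$-periodicity of signs \emph{on $G$ only}. Your self-similarity lemma for $N$ is correct but is not needed for the paper's argument.
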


\begin{proof}
Let $N = N_{\Omega}(G)$, and suppose $\gamma \in N$ is an odometer. Therefore, 
$\gamma = (\gamma_0, \gamma_1) \sigma$ and 
  \[
  \gamma a_1 \gamma^{-1} = (\gamma_0, \gamma_1) \sigma (a_r, \id) \sigma (\gamma_1^{-1}, \gamma_0^{-1}) \sigma = (\gamma_0 \gamma_1^{-1}, \gamma_1 a_r \gamma_0^{-1}) \sigma \in G.
  \]
 Thus, the element $\gamma_0 \gamma_1^{-1}$ is a section of $\gamma a_1 \gamma^{-1}$, which must lie in $G$ by the self-similarity of $G$.

Now consider the sign:
\[ \sgn_{n+1}(\gamma) = \sgn_{n+1}((\gamma_0, \gamma_1)) \cdot \sgn_{n+1}(\tau). \]
Since $\sgn_{n+1}(\tau) = 1$ for all $n \geq 1$, we have:
\[ \sgn_{n+1}(\gamma) = \sgn_n(\gamma_0) \cdot \sgn_n(\gamma_1)=\sgn_n(\gamma_0\gamma_1). \]
Using that $\sgn_n$ is a homomorphism and that $\sgn_n(\gamma_1) = \sgn_n(\gamma_1^{-1})$, it follows that:
\[ \sgn_n(\gamma_0 \gamma_1) = \sgn_n(\gamma_0 \gamma_1^{-1}). \]

Thus,
\begin{align*}
\sgn^{r+1}(\gamma) &= (\sgn_1(\gamma), \sgn_2(\gamma), \ldots, \sgn_{r+1}(\gamma)) \\
&= (\sgn_1(\gamma), \sgn_1(\gamma_0 \gamma_1^{-1}), \sgn_2(\gamma_0 \gamma_1^{-1}), \ldots, \sgn_r(\gamma_0 \gamma_1^{-1})).
\end{align*}

Since $\gamma$ is an odometer, we have $\sgn_1(\gamma) = -1$ and $\sgn_j(\gamma_0 \gamma_1^{-1}) = -1$ for $1 \leq j \leq r$, implying that $\gamma_0 \gamma_1^{-1} \in G$ is an odometer — contradicting the assumption that $G$ contains no odometers. 
\end{proof}

\begin{theorem} \label{IMGfsettled}
Let $k$ be a number field and let $f(x) \in k(x)$ be a quadratic rational map whose post-critical set $P$ is described in \ref{pcorbit}. Assume that:
\begin{enumerate}
\item $r$ is odd, or
\item $r$ is even, $s$ is odd, and $\frac{r}{\gcd(r, s - 1)}$ is odd.
\end{enumerate}
Then the arithmetic and geometric IMGs of $f$ do not contain any settled elements. Moreover, for any $a \in \PP^1(k) \setminus P$, the arboreal representation of $(f, a)$ is not densely settled.
\end{theorem}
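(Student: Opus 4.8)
The plan is to reduce the entire statement to the single fact that, under hypotheses (1) and (2), the model group $G = \langle\langle a_1,\ldots,a_r\rangle\rangle$ contains no odometers, and then to propagate this through conjugation, the normalizer, and specialization. First I would check that both hypotheses fall outside the criterion of Theorem~\ref{odoG}. The key elementary observation is that $\frac{r}{\gcd(r,s-1)}$ always divides $r$ and therefore inherits a parity constraint from $r$: when $r$ is odd, this quotient is odd, so the second branch of Theorem~\ref{odoG} fails, while the first branch fails because $r$ is odd; when $r$ is even, $s$ odd, and $\frac{r}{\gcd(r,s-1)}$ odd, both branches of Theorem~\ref{odoG} fail directly. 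Hence in either case $G$ contains no odometers.

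Next I would pass to the geometric IMG. By Theorem~\ref{thm:geom} we have $G^{\text{geom}}(f) = \gamma G \gamma^{-1}$ for some $\gamma \in \Omega$, and since conjugation preserves the level-wise cycle structure of an automorphism, an odometer in $G^{\text{geom}}(f)$ would yield one in $G$. Thus $G^{\text{geom}}(f)$ has no odometers; since it is self-similar (Section~\ref{mon}), Corollary~\ref{cor:sodo} forces it to contain no settled elements.

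For the arithmetic IMG I would exploit normality. As $G^{\text{geom}}(f)\trianglelefteq G^{\text{arith}}(f)$, we have $G^{\text{arith}}(f)\subseteq N_{\Omega}(G^{\text{geom}}(f)) = \gamma\, N_{\Omega}(G)\,\gamma^{-1}$. Theorem~\ref{odoN} tells us that $N_{\Omega}(G)$ contains no odometers, and conjugation again preserves this property, so $G^{\text{arith}}(f)$ contains no odometers; self-similarity together with Corollary~\ref{cor:sodo} then rules out settled elements. Finally, $G_{f,a}\subseteq G^{\text{arith}}(f)$ via specialization at $t=a$. If $G_{f,a}$ contained a settled element $\gamma$, then $\gamma$ would be a settled element of the self-similar group $G^{\text{arith}}(f)$, and Corollary~\ref{cor:sodo} would produce an odometer there, contradicting the previous step. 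Hence $G_{f,a}$ contains no settled elements at all; since $G_{f,a}$ is a nonempty group, the empty set of settled elements cannot be dense, so $(f,a)$ is not densely settled.

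The main obstacle I anticipate is the arithmetic step: unlike the geometric group, $G^{\text{arith}}(f)$ is not presented as a conjugate of the explicit model $G$, so I cannot compute its odometers directly and must route the argument through the normalizer via Theorem~\ref{odoN}, being careful that the self-similarity required to invoke Corollary~\ref{cor:sodo} genuinely holds for $G^{\text{arith}}(f)$ (it does, by Section~\ref{mon}). A secondary point requiring care is that $G_{f,a}$ itself need not be self-similar, so the contradicting odometer is extracted from the ambient self-similar group $G^{\text{arith}}(f)$ rather than from $G_{f,a}$ directly.
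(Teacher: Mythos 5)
Your proposal is correct and follows essentially the same route as the paper: negate the criterion of Theorem~\ref{odoG} (your divisibility remark for odd $r$ is exactly the point), pass to $\gm(f)$ via the conjugacy of Theorem~\ref{thm:geom}, place $\ga(f)$ inside the conjugated normalizer and invoke Theorem~\ref{odoN}, then use self-similarity with Corollary~\ref{cor:sodo} to exclude settled elements, and restrict to $G_{f,a}\subseteq\ga(f)$. You merely make explicit two steps the paper leaves implicit --- the identity $N_\Omega(\gamma G\gamma^{-1})=\gamma N_\Omega(G)\gamma^{-1}$ and the fact that the odometer contradicting settledness of an element of $G_{f,a}$ is produced in the ambient self-similar group $\ga(f)$ --- which is careful but not a different argument.
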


\begin{proof}
If $(r, s)$ satisfy one of the conditions in the statement, then by Theorem~\ref{odoG}, $G$ does not contain any odometers. By Theorem~\ref{odoN}, the normalizer of $G$ also does not contain any odometers. Since the arithmetic IMG of $f$ is contained in this normalizer, it cannot contain any settled elements. Similarly, the arboreal representation is a subgroup of the arithmetic IMG, and therefore it also does not contain any settled elements.
\end{proof}

\begin{corollary}\label{cor:arboreal}
Let $f(x) = \frac{1}{(x - 1)^2} \in \QQ(x)$.
\begin{enumerate}
\item The geometric and profinite arithmetic IMGs of $f$ do not contain any settled elements.
\item The arboreal Galois group $G_{f,a}$ does not contain any settled elements for any $a \in \QQ \setminus \{0, 1\}$.
\end{enumerate}
\end{corollary}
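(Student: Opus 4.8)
The plan is to recognize that this corollary is an immediate specialization of Theorem~\ref{IMGfsettled}, so the entire task reduces to a dynamical computation identifying the invariants $(r,s)$ attached to $f(x)=\frac{1}{(x-1)^2}$ and verifying that they meet hypothesis~(1) of that theorem. First I would analyze $f$ as a degree-$2$ self-map of $\PP^1_{\QQ}$. Writing $u=x-1$ presents $f$ as $u\mapsto u^{-2}$, whose ramification occurs exactly at $u=0$ and $u=\infty$; translating back, the critical points of $f$ are $x=1$ and $x=\infty$, with critical values $f(1)=\infty$ and $f(\infty)=0$. Tracing the forward orbit, $0\mapsto 1\mapsto\infty\mapsto 0$ is a single $3$-cycle, so the post-critical set is $P=\{0,1,\infty\}$ and $r=3$. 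Crucially, both critical points $\{1,\infty\}$ lie in this periodic orbit, so $f$ genuinely falls under the standing assumptions of Section~\ref{gIMG}.

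Next I would match this orbit to the normal form \eqref{pcorbit}. Setting $p_3:=1$ (a critical point, playing the role of $p_r$), the recursion $p_i=f^i(p_r)$ gives $p_1=f(1)=\infty$, $p_2=f(\infty)=0$, and $p_3=f(0)=1$. The second critical point $\infty=p_1$ then forces $p_{s-1}=p_1$, i.e.\ $s=2$, which satisfies $1<s\le r$; the branch points $p_1=\infty$ and $p_s=p_2=0$ are precisely the two critical values, as required. Thus $(r,s)=(3,2)$, and in particular $r=3$ is odd, so condition~(1) of Theorem~\ref{IMGfsettled} holds.

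With the bookkeeping done, part~(1) follows directly: Theorem~\ref{IMGfsettled} asserts that when $r$ is odd, neither $\gm(f)$ nor the profinite $\ga(f)$ contains a settled element. For part~(2), I would first note that $\QQ\setminus\{0,1\}=\PP^1(\QQ)\setminus P$, so any admissible $a$ avoids the post-critical set; since $P$ contains the critical points, no element of $f^{-n}(a)$ is critical, and hence $f^n(x)-a$ has $2^n$ distinct roots for all $n$. This guarantees the preimage tree $T_a$ is a complete binary tree and that $G_{f,a}$ embeds into $\ga(f)$ via specialization at $t=a$. As $\ga(f)$ is self-similar and, by Theorems~\ref{odoG} and~\ref{odoN}, sits inside a group containing no odometers, Corollary~\ref{cor:sodo} shows $G_{f,a}$ can contain no settled element. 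I would close by remarking that this settles the $f=\frac{1}{(x-1)^2}$ case unconditionally. There is no genuine obstacle here beyond care in the hypothesis check; the only point demanding attention is the verification that excluding $\{0,1\}$ (equivalently all of $P\cap\PP^1(\QQ)$) is exactly what is needed to keep $T_a$ a full binary tree so that the embedding $G_{f,a}\hookrightarrow\ga(f)$ is valid.
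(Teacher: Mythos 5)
Your proposal is correct and takes essentially the same route as the paper: identify the critical points $1$ and $\infty$, trace the post-critical $3$-cycle $1 \to \infty \to 0 \to 1$ to get $(r,s)=(3,2)$ in the notation of \eqref{pcorbit}, and conclude from condition~(1) (odd $r$) of Theorem~\ref{IMGfsettled}. The additional details you supply --- that $a \notin P$ guarantees $2^n$ distinct roots so $T_a$ is a complete binary tree, and that $G_{f,a}$ embeds into $\ga(f)$ by specialization --- are correct but are already absorbed into the paper's background section and the statement of Theorem~\ref{IMGfsettled}, so they constitute extra care rather than a different argument.
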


\begin{proof}
The rational function $f$ has two critical points, $1$ and $\infty$, and the post-critical set of $f$ is $P = \{0, 1, \infty\}$, with the orbit:
\[ 1 \to \infty \to 0 \to 1. \]
Thus, using the notation from \eqref{pcorbit}, we have $r = 3$ and $s = 2$. The conclusion then follows directly from Theorem~\ref{IMGfsettled}.
\end{proof}

\subsection{A question of Jones and Levy on settledness}\label{Jones-Levysec} \ \\
Let $f$ be a quadratic rational function over the finite field $k$, and let $a \in k$. Let $T_{f,a}$ be the infinite tree formed by the inverse image sets $f^{-n}(a)$ for all $n \geq 1$. For each $n \geq 1$, let $\mu_n$ be the probability measure assigning equal mass to each point, counting multiplicity. This yields a probability measure $\mu$ on $\delta T_{f,a} = \lim_{n \to \infty} f^{-n}(a)$ by
\[ \mu(\Sigma) = \lim_{n \to \infty} \mu_n(\pi_n(\Sigma)). \]

The pair $(f, a)$ is called \emph{settled} if the union of all open $G_k$-orbits on $\delta T_{f,a}$ has measure 1. In other words, for $\mu$-almost every $v \in \delta T_{f,a}$, the $G_k$-orbit of $v$ is open. Since $k$ is a finite field, $G_k$ is topologically generated by a single element, namely the Frobenius automorphism $\Frob_k$.

In \cite[Proposition~6.4]{CortezLukina}, Cortez and Lukina show that an automorphism $\gamma \in \Omega$ is settled if and only if for $\mu$-almost all $v \in \delta T$, the closure of the $\gamma$-orbit of $v$ is open. That is, $\gamma$ is settled if and only if the set $\{ \gamma^k(v) \mid k \in \ZZ_2 \}$ is open in $\delta T$ for $\mu$-almost all $v \in \delta T$.

Thus, we may reformulate the definition of Jones and Levy:
A pair $(f,a)$ is settled if $\Frob_k$ is settled in the automorphism group of the tree $T_{f,a}$.

\begin{question*}[Jones and Levy \cite{JonesLevy}]
Let $k$ be a finite field of characteristic $>2$. Must $(f, a)$ be settled for all $a \in \PP^1(k)$?
\end{question*}

Let $f(x)=\frac{1}{(x-1)^2}$ and let $p > 2$ be a prime. Let $\bar{a}$ denote the reduction of $a$ modulo $p$. Let $k_{\infty,a}$ be the field obtained by adjoining to $\QQ$ the roots of $f^{(n)}(x) - a$ for all $n \geq 1$. Let $\bar{f}$ denote $1/(x - 1)^2 \in \FF_p(x)$. 

If the pair $(\bar{f}, \bar{a})$ is settled, then the automorphism $\Frob_p$ is settled. If $p$ is unramified in $k_{\infty,a}$, then the Frobenius element for $p$ in $k_{\infty,a}$ is settled in the arboreal Galois group $G_{f,a}$, which contradicts Corollary~\ref{cor:arboreal}. This provides a negative answer to the question of Jones and Levy.

\begin{proposition}\label{prop:JL}
Let $f(x) = \frac{1}{(x - 1)^2} \in \ZZ(x)$ and let $p > 2$ be a prime that is unramified in $k_{\infty,a}$. Then the pair $\left(\frac{1}{(x - 1)^2}, a\right)$ is not settled for any $a \in \FF_p \setminus \{0, 1\}$.
\end{proposition}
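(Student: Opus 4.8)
The plan is to derive Proposition~\ref{prop:JL} as a direct consequence of the machinery already assembled, by contraposition. Suppose, for contradiction, that the pair $(\bar{f}, \bar{a})$ were settled for some $a \in \FF_p \setminus \{0,1\}$. By the reformulation of Jones and Levy's definition given above, this means precisely that the Frobenius automorphism $\Frob_p$, viewed as an element of the automorphism group of the tree $T_{\bar{f}, \bar{a}}$, is a settled element of $\Omega$ in the sense of Definition~\ref{defn:settled}. The key point is then to transport this settledness from the finite-field picture back to the characteristic-zero arboreal Galois group $G_{f,a}$ attached to $f(x) = \frac{1}{(x-1)^2} \in \QQ(x)$.

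The transport step is where the hypothesis that $p$ is unramified in $k_{\infty,a}$ does its work. First I would note that the arboreal Galois group $G_{f,a} \subseteq \Omega$ is the image of $\Gal(k_{\infty,a}/\QQ)$ acting on the tree $T_{f,a}$. When $p$ is unramified in $k_{\infty,a}$, there is a well-defined Frobenius conjugacy class at $p$ in this Galois group, and its action on the tree of preimages coincides, via reduction modulo $p$ and the compatibility of the tree structures, with the action of $\Frob_p$ on $T_{\bar{f}, \bar{a}}$. Since settledness is a property of the cycle structure of an automorphism at every level of the tree (equivalently, by Definition~\ref{defn:settled}, a statement about stable cycles and their densities), and since this cycle structure is preserved under the identification of the two trees, the element of $G_{f,a}$ given by $\Frob_p$ is settled as an element of $\Omega$.

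At this stage the contradiction is immediate. By Corollary~\ref{cor:arboreal}, the arboreal Galois group $G_{f,a}$ for $f(x) = \frac{1}{(x-1)^2}$ contains \emph{no} settled elements whatsoever, for any $a \in \QQ \setminus \{0,1\}$; this is because $f$ has $r = 3$ and $s = 2$, so $r$ is odd and condition (1) of Theorem~\ref{IMGfsettled} applies. But we have just produced a settled element, namely the Frobenius at $p$, inside $G_{f,a}$. This is the desired contradiction, so $(\bar{f}, \bar{a})$ cannot be settled, proving the proposition.

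The main obstacle I anticipate is making the transport step fully rigorous, rather than routine: one must check that the Frobenius element in $\Gal(k_{\infty,a}/\QQ)$, under the isomorphism of trees $T_{f,a} \cong T_{\bar{f},\bar{a}}$ coming from good reduction at $p$, really does act with the same permutation cycle type on each level as $\Frob_p$ acting on the finite-field tree. This requires that $p$ be a prime of good reduction for the tower (so that preimages reduce bijectively and the covering $f^n(x) = a$ stays separable modulo $p$), which is exactly guaranteed by the unramifiedness hypothesis together with the standard fact that good reduction identifies decomposition-group action with the Frobenius action on the special fiber. Everything else—the parity computation giving $r$ odd, and the non-existence of settled elements—has already been established, so once the reduction dictionary is in place the argument closes cleanly.
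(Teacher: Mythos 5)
Your proposal is correct and follows essentially the same route as the paper: assume $(\bar f,\bar a)$ is settled, use the Cortez--Lukina reformulation to view $\Frob_p$ as a settled automorphism, use unramifiedness at $p$ to identify it with a Frobenius element in $G_{f,a}$ acting with the same cycle structure, and contradict Corollary~\ref{cor:arboreal}. Your added discussion of the good-reduction transport step is a more explicit spelling-out of what the paper asserts in one line, not a different argument.
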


\section{The structure of the group $G$}\label{sec:structure}
For $i \geq 1$, let $N_i$ be the topological closure of the subgroup of $G$ generated by all $G$-conjugates of
\begin{equation}\label{defnNi} 
\{ a_j \colon j \not\equiv i, i - s + 1 \text{ mod } r \}.
\end{equation}

Note that since $N_i$ is a closed subgroup of $G$, the quotient $G / N_i$ is complete under the profinite topology for any $i = 1, \ldots, r$. 

Moreover, $G_n / N_{i,n}$ is a cyclic group generated by $a_i \restr{T_n}$ for each $n \geq 1$, and hence $G / N_i$ is topologically generated by $a_i$. 

\begin{lemma}\label{ordGN}
Let $H$ be a closed subgroup of $G$ such that $G_n/H_n$ is cyclic for all $n\geq 1$. The order of $G_{n+1}/H_{n+1}$ can be at most $2$ times the order of $G_{n}/H_n$ for any $n\geq 1$.
\end{lemma}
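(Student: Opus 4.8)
The plan is to study the surjection between the two cyclic quotients induced by restriction to one lower level, and to show that its kernel is forced to be both cyclic and of exponent dividing $2$, hence of order at most $2$.

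First I would set up the restriction homomorphism $\rho := \pi_{n+1,n}\colon \Omega_{n+1}\to\Omega_n$. Since $\pi_n = \pi_{n+1,n}\circ \pi_{n+1}$, we get $\rho(G_{n+1})=G_n$ and $\rho(H_{n+1})=H_n$, so $\rho$ descends to a surjective homomorphism
\[
\psi\colon G_{n+1}/H_{n+1}\longrightarrow G_n/H_n,\qquad gH_{n+1}\longmapsto \rho(g)H_n .
\]
Writing $K:=\ker\psi$, we obtain $|G_{n+1}/H_{n+1}| = |K|\cdot|G_n/H_n|$, so the statement is equivalent to the bound $|K|\le 2$.

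Next I would identify $K$ with a subquotient of $\ker\rho$. Let $D:=G_{n+1}\cap\ker\rho$. If $gH_{n+1}\in K$, then $\rho(g)\in H_n=\rho(H_{n+1})$, so picking $h\in H_{n+1}$ with $\rho(h)=\rho(g)$ gives $gh^{-1}\in D$ and $gH_{n+1}=(gh^{-1})H_{n+1}$. Hence the map $D\to G_{n+1}/H_{n+1}$, $d\mapsto dH_{n+1}$, has image exactly $K$ and kernel $D\cap H_{n+1}$, yielding $K\cong D/(D\cap H_{n+1})$.

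The decisive structural input is the shape of $\ker\rho$. An automorphism of $T_{n+1}$ lying in $\ker\rho$ fixes $T_n$ pointwise and may only fix or swap the two children of each of the $2^n$ vertices at level $n$; these swaps act on disjoint pairs and commute, so $\ker\rho\cong(\ZZ/2\ZZ)^{2^n}$ is elementary abelian of exponent $2$. Consequently $D$, and therefore its quotient $K$, has exponent dividing $2$. On the other hand, $K$ is a subgroup of the cyclic group $G_{n+1}/H_{n+1}$, hence cyclic. A cyclic group of exponent dividing $2$ has order at most $2$, so $|K|\le 2$ and the claimed inequality follows. I do not anticipate a real obstacle: once $\psi$ is recognized as well defined and surjective and $\ker\rho$ is seen to be elementary abelian (both immediate from the decomposition $\Omega_{n+1}\simeq(\Omega_n\times\Omega_n)\rtimes S_2$), the argument reduces to the single observation that passing from level $n$ to level $n+1$ introduces only order-two bottom-level swaps, which cannot enlarge a cyclic quotient by more than a factor of $2$.
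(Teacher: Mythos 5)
Your proof is correct and takes essentially the same approach as the paper's: both descend the restriction map to a surjection $G_{n+1}/H_{n+1}\to G_n/H_n$, identify its kernel with a quotient of $G_{n+1}\cap\ker\pi_{n+1,n}$, and conclude from the fact that this kernel is simultaneously a quotient of the elementary abelian $2$-group $\ker\pi_{n+1,n}\cong(\ZZ/2\ZZ)^{2^n}$ and a subgroup of the cyclic group $G_{n+1}/H_{n+1}$ that it has order at most $2$. Your explicit checks that $\rho(H_{n+1})=H_n$ and that $K\cong D/(D\cap H_{n+1})$ simply spell out details the paper leaves implicit.
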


\begin{proof}
Consider the composition of $\pi_n \colon G_{n+1}\to G_n$ and $\phi \colon G_n \to G_n/H_n$. Since the image of $H_{n+1}$ under $\pi_n$ is contained in $H_n$, we have a homomorphism
$$G_{n+1}/H_{n+1} \to G_n/H_n.$$
Since this map is surjective, it is enough to show that the size of its kernel is less than or equal to two.
The kernel is given by $\ker(G_{n+1} \to G_n)H_{n+1}/H_{n+1}$ and it is isomorphic to a quotient of $\ker(G_{n+1} \to G_n)$. Since 
 $\ker(G_{n+1} \to G_n)$ is an elementary abelian $2$-group, its quotient is also so. Since $G_n/H_n$ is cyclic for all $n\geq 1$, as a cyclic subgroup of an elementary abelian $2$ group, kernel can be of order at most two.
\end{proof}

We now prove a lemma on the structure of the subgroup $N_i$, which will play an essential role in several of the arguments that follow.

\begin{lemma}\label{Ni}
For $1 \leq i \leq r$, let $N_i$ be the subgroup of $G$ defined in ~\ref{defnNi}.
\begin{enumerate}
\item $N_s = N_{s-1} \times N_{s-1}$
\item For $i\neq s$, the subgroup $N_i \cap (\Omega \times \Omega)$ is a subset of $(N_{i-1} \times N_{i-1})\customlangle\customlangle (a_{i-1}, a_{i-1}^{-1}) \customrangle\customrangle$.
\item For $i \in \{1,\ldots,r\}$,
 if $(\gamma_0, \gamma_1) \in N_i$ for some $i \in \{1, \ldots, r\}$, then $\gamma_0\gamma_1 \in N_{i-1}$.
\end{enumerate}
\end{lemma}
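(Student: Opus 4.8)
The engine of the whole lemma is part (3), and the cleanest route is to repackage the ``product of the two sections'' map as a group homomorphism. The plan is to define
\[
\Phi\colon G \to G/N_{i-1}, \qquad (\gamma_0,\gamma_1)\tau \longmapsto \gamma_0\gamma_1 N_{i-1}.
\]
This is well defined because $G$ is self-similar, so the sections $\gamma_0,\gamma_1$ of any $\gamma\in G$ again lie in $G$. Although $(\gamma_0,\gamma_1)\mapsto \gamma_0\gamma_1$ is not a homomorphism into $\Omega$, it becomes one after passing to $G/N_{i-1}$: this quotient is topologically generated by $a_{i-1}$ and is therefore abelian, so the reordering of sections forced by the multiplication rule \eqref{semi} is invisible in $G/N_{i-1}$. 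A direct check against \eqref{semi} then gives $\Phi(\gamma\gamma')=\Phi(\gamma)\Phi(\gamma')$ irrespective of the top permutations, and $\Phi$ is continuous. I would also record the uniform formula $\Phi(a_j)=a_{j-1}N_{i-1}$, valid for every $j$ (indices mod $r$), obtained by reading off the sections of each generator in \eqref{generators}.

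With $\Phi$ in hand, part (3) is immediate. Since $N_i$ is the closed normal closure of $\{a_j : j\not\equiv i,\,i-s+1\}$, I only need $\Phi$ to kill these generators: for each of them $\Phi(a_j)=a_{j-1}N_{i-1}$, and $a_{j-1}\in N_{i-1}$ precisely because $j-1\not\equiv (i-1)$ and $j-1\not\equiv (i-1)-s+1$, which is exactly the defining condition $j\not\equiv i,\,i-s+1$. As $\Phi$ is a continuous homomorphism into an abelian group, it then annihilates all $G$-conjugates and the closed subgroup they generate, so $\Phi(N_i)=\{e\}$; in particular $(\gamma_0,\gamma_1)\in N_i$ forces $\gamma_0\gamma_1\in N_{i-1}$.

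For part (1) I would argue by double inclusion. First, $N_{s-1}\trianglelefteq G$ makes $N_{s-1}\times N_{s-1}$ a closed $G$-invariant subgroup (invariance under swapping elements is the computation $(g_0,g_1)\sigma\,(x_0,x_1)\,((g_0,g_1)\sigma)^{-1}=(g_0x_1g_0^{-1},g_1x_0g_1^{-1})$ coming from \eqref{semi}). The generators of $N_s$ are exactly the non-swapping $a_j$ ($j\neq 1,s$), each of the form $(\id,a_{j-1})$ or $(a_{j-1},\id)$ with $a_{j-1}\in N_{s-1}$, giving $N_s\subseteq N_{s-1}\times N_{s-1}$. For the reverse inclusion I would place each generator of $N_{s-1}$ into both coordinates: the identities $(\id,a_\ell)=a_{\ell+1}$ and $(a_\ell,\id)=a_{\ell+1}$ handle one coordinate, conjugation by $a_1=(a_r,\id)\sigma$ moves it to the other, and conjugates are absorbed using $(g,g)\in G$ from Lemma~\ref{(g,g)} together with normality of $N_s$ in $G$. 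Hence $N_{s-1}\times N_{s-1}\subseteq N_s$, so equality holds (this also re-proves part (3) for $i=s$).

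Finally, part (2) follows by combining part (3) with the procyclic structure of $G/N_{i-1}$. Given $(\gamma_0,\gamma_1)\in N_i$ with $i\neq s$, both sections lie in $G$, so their images in $G/N_{i-1}=\customlangle\customlangle a_{i-1}\customrangle\customrangle$ are powers $a_{i-1}^{k_0}$ and $a_{i-1}^{k_1}$ with $k_0,k_1\in\ZZ_2$. Part (3) gives $a_{i-1}^{k_0+k_1}\in N_{i-1}$, whence $a_{i-1}^{k_1}\equiv a_{i-1}^{-k_0}\pmod{N_{i-1}}$, and the factorization
\[
(\gamma_0,\gamma_1)=\bigl(\gamma_0 a_{i-1}^{-k_0},\ \gamma_1 a_{i-1}^{k_0}\bigr)\cdot (a_{i-1},a_{i-1}^{-1})^{k_0}
\]
exhibits $(\gamma_0,\gamma_1)$ inside $(N_{i-1}\times N_{i-1})\customlangle\customlangle (a_{i-1},a_{i-1}^{-1})\customrangle\customrangle$, since the first factor lies in $N_{i-1}\times N_{i-1}$. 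The step I expect to be most delicate is the very first one: verifying rigorously that $\Phi$ is a homomorphism and invoking abelianness of $G/N_{i-1}$ in exactly the form the argument needs. Once that is secured, parts (2) and (3) are essentially bookkeeping; the only other care point is the two-coordinate generation in part (1).
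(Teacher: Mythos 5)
Your proof is correct, and for parts (2) and (3) it takes a genuinely different route from the paper, inverting its logical order. The paper proves (1) essentially as you do (same double inclusion, using the swap conjugations \eqref{conjbya1}, \eqref{conjbyas} and Lemma~\ref{(g,g)}), then proves (2) \emph{directly}: it sets $M_i := (N_{i-1}\times N_{i-1})\customlangle\customlangle (a_{i-1},a_{i-1}^{-1})\customrangle\customrangle$, shows $M_i$ is normal in $\tilde{G}=(G\times G)\rtimes\customlangle\sigma\customrangle$ with abelian quotient, and checks on topological generators that $N_i\cap(\Omega\times\Omega)$ maps trivially into $\tilde{G}/M_i$ — this requires organizing the generators of $N_i\cap(\Omega\times\Omega)$ (conjugates of $a_j$ for $j\in S_i\setminus\{1,s\}$ together with products of pairs of conjugates of the swapping generators) and a small case analysis of $a_1^2$, $a_s^2$, $a_1a_s$; finally (3) is read off as a corollary of (1) and (2). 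You instead prove (3) first via the section-product homomorphism $\Phi\colon G\to G/N_{i-1}$ and then deduce (2) element-wise from (3) through the explicit factorization $(\gamma_0,\gamma_1)=(\gamma_0a_{i-1}^{-k_0},\gamma_1a_{i-1}^{k_0})(a_{i-1},a_{i-1}^{-1})^{k_0}$. The step you flagged as delicate does hold: $\Phi$ is a continuous homomorphism precisely because $G/N_{i-1}$ is abelian, and that abelianness is available without circularity, since the paper records \emph{before} the lemma (using only $a_1\cdots a_r=\id$) that $G/N_{i-1}$ is topologically generated by the image of $a_{i-1}$; you correctly avoid invoking Proposition~\ref{infquo} ($G/N_i\simeq\ZZ_2$), which is proved later and depends on this lemma. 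Your route buys a uniform, case-free proof of (3) for all $i$ (including $i=s$) and reduces (2) to bookkeeping; the paper's route buys the structural byproduct that $M_i$ is normal in $\tilde{G}$ with abelian quotient, which is the form in which the fact is reused. One cosmetic point to tighten: the identification $G/N_{i-1}=\customlangle\customlangle a_{i-1}\customrangle\customrangle$ is an abuse — what you need, and what holds because $\customlangle\customlangle a_{i-1}\customrangle\customrangle=\{a_{i-1}^k \mid k\in\ZZ_2\}$ is compact and its image in $G/N_{i-1}$ is a closed dense subgroup, is that every coset equals $a_{i-1}^kN_{i-1}$ for some $k\in\ZZ_2$.
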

\begin{proof}
Let $H$ be a normal subgroup of $G$, and assume $H$ contains an element $\gamma = (\gamma_0, \id)$ for some $\gamma_0 \in G$. Then $H$ also contains $(\id, \gamma_0)$, since \begin{align}\label{conjbya1}
\begin{split}
a_1 \gamma a_1^{-1} &= (a_r, \id) \sigma (\gamma_0, \id)(\id, a_r^{-1}) \sigma \\
&= (a_r a_r^{-1}, \gamma_0) = (\id, \gamma_0) \in H.
\end{split}
\end{align}

Similarly, if $H$ contains $(\id, \beta_1)$ for some $\beta_1 \in G$, then
\begin{align}\label{conjbyas}
\begin{split}
a_s \beta a_s^{-1} &= (\id, a_{s-1}) \sigma (\id, \beta_1)(a_{s-1}^{-1}, \id) \sigma \\
&= (\beta_1, \id)
\end{split}
\end{align}
is also in $H$. 

We first show that $N_{s-1} \times N_{s-1}$ is contained in $N_s$. From the definition, $a_i \restr{T_1} = \id$ for $i \neq 1, s$, hence their conjugates are also identity on level one. So the topological generators of $N_s$ lie in the kernel of $\pi_1 : \Omega \to \Omega_1$, which is an open (and closed) subset of the profinite group $\Omega$. Thus, $N_s \subset \ker(\pi_1)$.

By ~\ref{defnNi}, the normal subgroup $N_{s-1}$ is topologically generated by
\begin{equation}\label{genset(s-1)}
\{ \beta_0 a_{i-1} \beta_0^{-1} \mid \beta_0 \in G,\ i \neq 1, s \}.
\end{equation}

For $s < i \leq r$, we have $a_i = (a_{i-1}, \id) \in N_{s}$ and for $2 \leq i \leq s-1$, $a_i = (\id, a_{i-1}) \in N_{s}$. By \eqref{conjbyas}, $(a_{i-1}, \id) \in N_s$ for all $i \neq 1,s$. 

For $\beta_0 \in G$, we have $\beta = (\beta_0, \beta_0) \in G$, and for $1 \leq i \leq r$, $i \neq 1, s$,
\begin{align}\begin{split} (\beta_0 a_{i-1} \beta_0^{-1}, \id) &= (\beta_0, \beta_0)(a_{i-1}, \id)(\beta_0^{-1}, \beta_0^{-1}) \\
     &= \beta a_i \beta^{-1} \in N_s. \end{split} \end{align}
Since $N_s$ is closed, $N_{s-1} \times \{\id\} \subset N_s$, and by conjugating with $a_s$, we also have $\{\id\} \times N_{s-1} \subset N_s$. 

Since for any $g\in G$ and $i\neq 1,s$, we have $ga_{i-1}g^{-1}$ is an element of $N_{s-1}$, we find that $N_{s-1} \times N_{s-1}$ contains the topological generators of $N_s$. Since $N_{s-1}\times N_{s-1}$ is a closed subgroup of $\Omega$, we obtain that $$N_s=N_{s-1} \times N_{s-1}.$$ 

To prove the second part of the statement,  assume $i \neq s$. Set $$M_i := (N_{i-1} \times N_{i-1}) \customlangle\customlangle (a_{i-1}, a_{i-1}^{-1}) \customrangle\customrangle.$$ We will show $M_i$ is a normal subgroup of $$\tilde{G} := (G \times G) \rtimes \customlangle \sigma \customrangle$$ with abelian quotient.

Since $N_{i-1}$ is normal in $G$, $N_{i-1} \times N_{i-1}$ is normal in $\tilde{G}$. Also, $G / N_j$ is generated by $a_j$, so we have a surjection
\[
 \varphi : (\customlangle\customlangle a_{i-1} \customrangle\customrangle \times \customlangle\customlangle a_{i-1} \customrangle\customrangle) \rtimes \customlangle \sigma \customrangle \to \tilde{G}/(N_{i-1} \times N_{i-1}). 
 \]
Let $H := \customlangle\customlangle (a_{i-1}, a_{i-1}^{-1}) \customrangle\customrangle$. By the third isomorphism theorem,
$$(\customlangle\customlangle a_{i-1} \customrangle\customrangle \times \customlangle\customlangle a_{i-1} \customrangle\customrangle) \rtimes \customlangle \sigma \customrangle /H \simeq \tilde{G}/M_i. $$ 
Since the group on the left is abelian, it follows that $\tilde{G}/M_i$ is abelian as well.

We claim that the image of $N_i \cap (\Omega \times \Omega)$ is trivial in $\tilde{G}/M_i$. Let $$S_i=\{ a_j \mid j\neq i,i+1-s\}.$$ The subgroup $N_i \cap (\Omega\times \Omega)$ contains the conjugates of the elements $ a_j$ for $ j\in S_ i \setminus \{1,s\}$. Since $i\neq s$, the set $S_i \cap \{1,s\}$ is not empty. For $j,k \in S_ i \setminus \{1,s\}$ (not necessarily distinct) and any $g, h \in G$ the product
$$ga_kg^{-1}ha_th^{-1}$$ is also in $N_i \cap (\Omega \times \Omega)$. In fact, these elements all together (topologically) generate the subgroup $N_i \cap (\Omega \times \Omega)$.

Since $\tilde{G}/M_i$ is abelian, the image of the described elements in the quotient $\tilde{G}/M_i$ are either identity, or $a_ka_t$ for $k,t \in S_i \cap \{1,s\}$. Therefore, it is enough to show that the images of such elements are identity in $\tilde{G}/M_i$. A quick computation of $a_1^2,a_s^2$ and $a_1a_s$ shows that the claim holds.

Third statement is a corollary of the first and second statements.
\end{proof}

\begin{proposition}\label{infquo}
For any $i=1,\ldots,r$, the quotient group $G/N_i$ is isomorphic to the group of $2$-adic integers $\ZZ_2$.
\end{proposition}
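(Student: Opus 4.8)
The plan is to show that $G/N_i$ is procyclic and pro-$2$, so that it is forced to be either $\ZZ_2$ or a finite cyclic $2$-group, and then to exclude the finite case by proving that the orders $e_i(n):=|G_n/N_{i,n}|$ tend to infinity. Indeed, $G/N_i$ is topologically generated by $a_i$ and each $G_n/N_{i,n}$ is cyclic, so $G/N_i=\ilim_n G_n/N_{i,n}$ is procyclic; being a quotient of $G\subseteq\Omega$ by a closed normal subgroup, and $\Omega=\ilim_n\Omega_n$ being pro-$2$, it is a pro-$2$ group, hence a quotient of $\ZZ_2$, i.e. isomorphic to $\ZZ_2$ or to $\ZZ/2^k\ZZ$. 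It therefore suffices to prove $e_i(n)\to\infty$, and for this I will extract from Lemma~\ref{Ni} a recursion that simultaneously lowers the index $i\mapsto i-1\pmod r$ and the level $n\mapsto n-1$.

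First I would transfer the statements of Lemma~\ref{Ni} to finite level: since $\ker\pi_n$ fixes level $1$, any element of $N_{i,n}$ fixing the first level lifts to an element of $N_i\cap(\Omega\times\Omega)$, so applying $\pi_n$ gives $N_{s,n}=N_{s-1,n-1}\times N_{s-1,n-1}$ and, for $i\neq s$, $N_{i,n}\cap(\Omega_{n-1}\times\Omega_{n-1})\subseteq (N_{i-1,n-1}\times N_{i-1,n-1})\langle (a_{i-1},a_{i-1}^{-1})\restr{T_{n-1}}\rangle$. For $i\notin\{1,s\}$ the generator is $a_i=(\id,a_{i-1})$ or $(a_{i-1},\id)$, so the containment forces any $k$ with $a_i^{k}\restr{T_n}\in N_{i,n}$ to satisfy $\bar a_{i-1}^{\,t}=1$ and $\bar a_{i-1}^{\,k+t}=1$ in $G_{n-1}/N_{i-1,n-1}$ for some $t$, whence $a_{i-1}^{k}\restr{T_{n-1}}\in N_{i-1,n-1}$; taking $k=e_i(n)$ yields $e_{i-1}(n-1)\mid e_i(n)$, i.e. $e_i(n)\geq e_{i-1}(n-1)$. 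The branch index $i=1$ is handled identically after squaring, using $a_1^2=(a_r,a_r)$, giving $e_1(n)\geq e_r(n-1)$.

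For $i=s$ the exact decomposition $N_{s,n}=N_{s-1,n-1}\times N_{s-1,n-1}$ together with $a_s^2=(a_{s-1},a_{s-1})$ shows that $a_s^2$ has order $e_{s-1}(n-1)$ modulo $N_{s,n}$; since $a_s$ moves level $1$ while $N_s\subseteq\ker\pi_1$, we have $a_s\restr{T_n}\notin N_{s,n}$, so the identity $\mathrm{ord}(x)=2\,\mathrm{ord}(x^2)$ in a cyclic $2$-group gives the sharper $e_s(n)=2\,e_{s-1}(n-1)$. Composing the one-step estimates once around the cycle $i\to i-1\to\cdots\to i$, which lowers the level by $r$ and passes through the index $s$ exactly once, gives $e_i(n)\geq 2\,e_i(n-r)$ for all $n>r$. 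As $e_i(m)\geq 1$ for every $m$, this forces $e_i(n)\to\infty$, so $G/N_i$ is an infinite procyclic pro-$2$ group and hence $G/N_i\cong\ZZ_2$; this is consistent with Lemma~\ref{ordGN}, which shows each transition map has kernel of order at most $2$.

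I expect the main obstacle to be the index-lowering step for $i\neq s$: one must account for the semidirect factor $\langle (a_{i-1},a_{i-1}^{-1})\rangle$ appearing in Lemma~\ref{Ni}(2) and verify that, modulo $N_{i-1,n-1}$, a power of $a_i$ can lie in $N_{i,n}$ only when the matching power of $a_{i-1}$ lies in $N_{i-1,n-1}$, and to treat the two branch indices $1$ and $s$ (where the generator contains $\sigma$) separately by passing to squares. The remainder is the cyclic bookkeeping that exactly one factor of $2$ is produced per revolution, which is what breaks the otherwise circular ``infinite if and only if infinite'' implication between consecutive $G/N_i$.
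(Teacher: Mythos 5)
Your proposal is correct and takes essentially the same approach as the paper: you extract the same one-step descent inequalities $e_i(n)\geq e_{i-1}(n-1)$ from Lemma~\ref{Ni}, produce the factor of $2$ exactly at the index $s$ (where $N_s\subseteq\ker\pi_1$ forces even exponents), compose once around the post-critical cycle to get $e_i(n)\geq 2\,e_i(n-r)$, and conclude via procyclicity that the infinite limit is $\ZZ_2$, which is precisely the paper's argument with Lemma~\ref{ordGN} playing the role of your pro-$2$ procyclic observation. Your explicit finite-level transfer of Lemma~\ref{Ni}, your exact equality $e_s(n)=2\,e_{s-1}(n-1)$, and your tracking of the factor $\customlangle\customlangle (a_{i-1},a_{i-1}^{-1})\customrangle\customrangle$ in Lemma~\ref{Ni}(2) are slightly more careful in wording than the paper's, but they are refinements of the same proof rather than a different route (and your treatment of the index $1$ by squaring glosses over odd exponents in exactly the same way the paper's displayed computation with $a_r^{m/2}$ does).
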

\begin{proof}
Since $N_i$ contains all $G$-conjugates of $a_j$ for $j \not\equiv i,i+s-1 \pmod{r} $, and $a_1 \cdots a_r=\id$, the quotient $G/N_i$ is topologically generated by the image of $a_i$, Hence, the closed subgroup generated by $a_i$ surjects onto $G/N_i$ for each $i=1,\ldots,r$. 

To show that $G/N_i$ is infinite and isomorphic to $\ZZ_2$, we estimate the growth of the index $[G_n \colon N_{i,n}]$ as $n$ increases. Using the recursive structure of the generators $a_j$, we obtain inequalities showing that the quotient grows by at least a factor of $2$ as level increases from $n$ to $n+r$.

Let $N_{i,n}$ denote the restriction of $N_i$ to level  $n$.  Assume $a_s^k\restr{T_n} \in N_{s,n}$ for some positive integer $k$. Since $N_s$ is contained in the stabilizer of level $1$, $k$ must be even. By Lemma~\ref{Ni}, we have $a_{s-1}^{k/2}\restr{T_{n-1}}$ is in $N_{s-1,n-1}$. This shows that $|G_n/N_{s,n}| \geq 2|G_{n-1}/N_{s-1,n-1}|$. 

For $2 \leq i <s$, we have $a_i^k=(\id, a_{i-1}^k)$ for any $k\geq 1$. Assume $a_i^k\restr{T_{n}} \in N_{i,n}$ for some positive integer $k$, then $a_{i-1}^k \restr{T_{n-1}} \in N_{i-1,n-1}$ by Lemma~\ref{Ni}. Now one obtains that $a_{1}^k\restr{T_{n}}$ is in $N_{1,n}$ by applying Lemma~\ref{Ni} consecutively. Hence $|G_n/N_{s,n}| \geq 2|G_{n-s+1}/N_{1,n-s+1}|$.

Assume that some power of $a_1\restr{T_{n-s+1}}$ is in $N_{1,n-s+1}$, that is $a_1^m\restr{T_{n-s+1}}=(a_r^{m/2} \restr{T_{n-s}},a_r^{m/2}\restr{T_{n-s}}) \in  N_{1,n-s+1}$ for some positive integer $m$. By Lemma~\ref{Ni}(3), we find that $a_r^m\restr{T_{n-s}} \in N_{r,n-s}$. To summarize, we have
$|G_n/N_{s,n}| \geq 2|G_{n-s}/N_{r,n-s}|$. 

For $s < i \leq r$, we apply Lemma~\ref{Ni} consecutively again to obtain 
 \begin{equation}\label{eq:bdquo} |G_n/N_{s,n}| \geq 2|G_{n-r}/N_{s,n-r}| 
 \end{equation} 
 for all $n\geq r$.
 
This shows that as $n \to \infty$, the order of $G_n/N_{s,n}$ goes to infinity. In fact, this discussion shows that $G/N_{i}$ is an infinite group for all $1\leq i \leq r$. By equation~\ref{eq:bdquo} and Lemma~\ref{ordGN}, we find that for any $n\geq 1$, there is some level $k_n \geq 1$ such that $G_{k_n}/ N_{i,k_n}$ is a cyclic group of order $2^n$. Since $N_i$ is a closed subgroup, $G/N_i$ is the inverse limit of $G_n/N_{i,n}$, and hence it is isomorphic to $\ZZ_2$.
\end{proof}

\begin{corollary}\label{intersection}
For any $i=1,\ldots,r$, the intersection of $N_i$ and $\customlangle\customlangle a_i \customrangle\customrangle$ is trivial. Hence \[ G= N_i \rtimes \customlangle\customlangle a_i \customrangle\customrangle. \]
\end{corollary}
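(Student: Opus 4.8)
The plan is to deduce everything from Proposition~\ref{infquo}, which already identifies $G/N_i \simeq \ZZ_2$ and exhibits the image of $a_i$ as a topological generator. The whole corollary then becomes an assertion about a continuous surjection of procyclic pro-$2$ groups. First I would record the structural facts: $N_i$ is a closed normal subgroup of $G$, being the closure of the subgroup generated by a family of $G$-conjugates, and the closure of a normal subgroup of a topological group is normal. Let $\pi\colon G \to G/N_i$ be the quotient map; by Proposition~\ref{infquo}, $G/N_i \simeq \ZZ_2$ and $\pi(a_i)$ topologically generates it. Let $\beta$ denote the restriction of $\pi$ to $\customlangle\customlangle a_i \customrangle\customrangle$.

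Next I would pin down the two maps into and out of $\customlangle\customlangle a_i \customrangle\customrangle$. The assignment $k \mapsto a_i^k$ defines a continuous surjection $\alpha\colon \ZZ_2 \to \customlangle\customlangle a_i \customrangle\customrangle$, as set up in the section on closed subgroups of $\Omega$, so $\customlangle\customlangle a_i \customrangle\customrangle$ is a continuous quotient of $\ZZ_2$. On the other hand $\beta$ is surjective: its image is closed by compactness of $\customlangle\customlangle a_i \customrangle\customrangle$ and contains the topological generator $\pi(a_i)$ of $G/N_i$, hence equals all of $G/N_i$. Composing gives a continuous surjection $\beta\alpha\colon \ZZ_2 \to \ZZ_2$. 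The heart of the argument, and the step I expect to be the main (albeit mild) obstacle, is to upgrade surjectivity to injectivity here: the kernel of $\beta\alpha$ is a closed subgroup of $\ZZ_2$, and since the quotient $\ZZ_2/\ker(\beta\alpha)$ is the infinite group $\ZZ_2$ while every proper closed subgroup $2^n\ZZ_2$ has finite quotient, the kernel must be trivial. Thus $\beta\alpha$ is an isomorphism; consequently $\alpha$ is injective (hence an isomorphism onto $\customlangle\customlangle a_i \customrangle\customrangle$), and therefore $\beta = (\beta\alpha)\alpha^{-1}$ is an isomorphism $\customlangle\customlangle a_i \customrangle\customrangle \isomto G/N_i$.

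Finally I would read off both conclusions from the fact that $\beta$ is an isomorphism. Injectivity of $\beta$ means $\ker \beta = N_i \cap \customlangle\customlangle a_i \customrangle\customrangle = \{\id\}$, which is the first assertion. Surjectivity of $\beta$ gives $G = N_i \cdot \customlangle\customlangle a_i \customrangle\customrangle$: given $g \in G$, choose $y \in \customlangle\customlangle a_i \customrangle\customrangle$ with $\pi(y) = \pi(g)$, so that $g y^{-1} \in N_i$ and $g = (gy^{-1})y$. Together with the normality of $N_i$ and the trivial intersection, this realizes $G$ as the internal semidirect product $G = N_i \rtimes \customlangle\customlangle a_i \customrangle\customrangle$. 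To finish in the topological category, I would note that the multiplication map $N_i \times \customlangle\customlangle a_i \customrangle\customrangle \to G$ is then a continuous bijection: it is onto by the product decomposition, and injective because $n_1 y_1 = n_2 y_2$ forces $n_2^{-1}n_1 = y_2 y_1^{-1} \in N_i \cap \customlangle\customlangle a_i \customrangle\customrangle = \{\id\}$. Being a continuous bijection of compact Hausdorff groups, it is a homeomorphism, so the semidirect product decomposition holds topologically as well.
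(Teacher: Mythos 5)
Your proof is correct and follows essentially the same route as the paper: both arguments use Proposition~\ref{infquo} to see that the restriction of $G \to G/N_i$ to $\customlangle\customlangle a_i \customrangle\customrangle$ is a continuous surjection between copies of $\ZZ_2$, upgrade surjectivity to injectivity, and read off the trivial intersection and the semidirect decomposition. The only differences are matters of detail rather than strategy: where the paper invokes Lemma~\ref{surjZ2} (Matsumura) for the surjective-implies-injective step, you prove the needed rank-one case directly from the classification of closed subgroups of $\ZZ_2$, and you additionally spell out the verification, left implicit in the paper, that the decomposition $G = N_i \rtimes \customlangle\customlangle a_i \customrangle\customrangle$ holds topologically via the compact-Hausdorff continuous-bijection argument.
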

\begin{proof}
Let $1\leq i \leq r$. The quotient map $\customlangle\customlangle a_i \customrangle\customrangle \to G/N_i$ is surjective. By Proposition~\ref{infquo}, $G/N_i$ is isomorphic to $\ZZ_2$. Since $\customlangle\customlangle a_i \customrangle\customrangle$ is a pro-cyclic group that surjects onto $\ZZ_2$, it is also isomorphic to $\ZZ_2$. By Lemma~\ref{surjZ2}, the $\ZZ_2$-linear map $\customlangle\customlangle a_i \customrangle\customrangle \to G/N_i$ is an isomorphism. 

Since $G/N_i \simeq \ZZ_2$ and the intersection of $N_i$ with the pro-cyclic subgroup generated by $a_i$ is trivial, we conclude that $G$ decomposes as a semi-direct product.
\end{proof}

\begin{lemma}\label{surjZ2}
Let $k\geq 1$ be an integer. Any surjective $\ZZ_2$-linear map from $\ZZ_2^k$ to itself is injective.
\end{lemma}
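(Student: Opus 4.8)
The plan is to represent the given $\ZZ_2$-linear map by a matrix and reduce the question to the residue field $\FF_2 = \ZZ_2/2\ZZ_2$, where a surjective endomorphism of a finite-dimensional vector space is automatically bijective. Concretely, after fixing the standard basis of $\ZZ_2^k$, I would write the map as multiplication by a matrix $A \in M_k(\ZZ_2)$, and reduce everything modulo $2$ to the matrix $\bar A \in M_k(\FF_2)$.

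The first step is to check that surjectivity of $A$ descends to $\bar A$. This is immediate by lifting: given $v \in \FF_2^k$, choose any lift $\tilde v \in \ZZ_2^k$, use surjectivity of $A$ to find $w$ with $Aw = \tilde v$, and reduce modulo $2$ to get $\bar A \bar w = v$. Hence $\bar A \colon \FF_2^k \to \FF_2^k$ is surjective. Since $\FF_2^k$ is finite-dimensional, the rank--nullity theorem forces $\bar A$ to be an isomorphism, so $\det \bar A \neq 0$ in $\FF_2$; equivalently $\det A \not\equiv 0 \pmod 2$.

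The final step exploits that $\ZZ_2$ is a local ring whose units are precisely the elements with nonzero reduction modulo $2$. Thus $\det A \in \ZZ_2^\times$, so $A$ is invertible over $\ZZ_2$ (its inverse is $(\det A)^{-1}\,\mathrm{adj}(A)$ by Cramer's rule, which has entries in $\ZZ_2$ once $\det A$ is a unit). An invertible map is in particular injective, which proves the claim. I do not expect a genuine obstacle here: the only point requiring a word of justification is the descent of surjectivity to $\FF_2$, and that follows directly from lifting. More conceptually, the lemma is a special case of the standard fact that a surjective endomorphism of a finitely generated module over a commutative (Noetherian local) ring is an isomorphism, with $\ZZ_2$ being the local ring in question.
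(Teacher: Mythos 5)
Your proof is correct, but it takes a different route from the paper: the paper disposes of the lemma in one line by citing Theorem~2.4 of \cite{Matsumura}, the general fact that a surjective endomorphism of a finitely generated module over a commutative ring is injective (proved there by the Cayley--Hamilton/determinant trick applied to the module viewed over a polynomial ring). You instead give a self-contained argument tailored to the situation at hand: since $\ZZ_2^k$ is free, the map is a matrix $A \in M_k(\ZZ_2)$; surjectivity descends by lifting to the residue field $\FF_2$, rank--nullity forces $\det \bar{A} \neq 0$, and because $\ZZ_2$ is local with maximal ideal $2\ZZ_2$ this means $\det A \in \ZZ_2^\times$, so Cramer's rule inverts $A$ over $\ZZ_2$. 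All steps check out, including the one genuinely needing justification (descent of surjectivity modulo $2$), and your closing remark correctly identifies your argument as a special case of the very theorem the paper cites. What the paper's citation buys is brevity and generality (no freeness or locality needed); what your argument buys is transparency and elementarity --- it uses nothing beyond linear algebra over a field and the fact that an element of $\ZZ_2$ with odd reduction is a unit, so a reader need not unwind the determinant trick for modules. One small stylistic note: the hypothesis that the map is $\ZZ_2$-linear on the free module $\ZZ_2^k$ already guarantees it is given by a matrix, with no topological or continuity input, and you implicitly use exactly this --- it would be worth one clause making that explicit, since elsewhere in the paper $\ZZ_2$ arises as a profinite group and a reader might wonder whether continuity is being assumed.
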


\begin{proof}
This is a special case of Theorem~2.4 of \cite{Matsumura}. 
\end{proof}
\begin{remark}
The normal subgroup $N_i$ contains all $a_j$ except for $j=i$ and $j=i-s+1$. Since $a_1\ldots a_r=\id$, we have $N_i=a_{i-s+1}a_iN_i$. Therefore, $a_iN_i=a_{i-s+1}^{-1}N_i$. We showed in Proposition~\ref{infquo} that 
$G/N_i \simeq \ZZ_2$ where the isomorphism is given by $a_i^kN_i \mapsto k$ for any $k\in \ZZ_2$.
\end{remark}

Using Lemma \ref{infquo} and a group theoretical proposition due to Bartholdi and Nekrashevych, we can show that $G$ is torsion-free.

\begin{proposition}\label{BN} \cite[Proposition 2.2]{BNIMG}
Let $H$ be a self-similar subgroup of $\Omega$ and let $N$ be a normal subgroup of $H$ contained in the stabilizer of the first level. If $H/N$ is torsion free, then so is $H$.
\end{proposition}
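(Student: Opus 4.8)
The plan is to argue by contradiction, exploiting the pro-$2$ structure of $\Omega$ together with self-similarity to produce a \emph{forbidden} torsion element. Suppose $H$ contains a nontrivial element of finite order. Since every $\Omega_n$ has order a power of $2$, the order of any finite-order element of $\Omega$ is a power of $2$; replacing that element by a suitable power, I obtain $g \in H$ with $g \neq \id$ and $g^2 = \id$. Passing to the quotient, $\bar g \in H/N$ satisfies $\bar g^2 = \id$, and since $H/N$ is torsion-free this forces $\bar g = \id$, i.e.\ $g \in N$. Because $N$ is contained in the first-level stabilizer $\Omega \times \Omega$, the element $g$ has trivial root permutation. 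The point I will use repeatedly is therefore the following: \emph{every order-$2$ element of $H$ lies in $N$, and hence fixes the first level.}

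First I would resist the temptation to descend naively through the sections $g_0, g_1 \in H$ (which are again killed in $H/N$ and hence again fix the first level): this descent never terminates and yields no contradiction. Instead, the key step is to locate a section that carries nontrivial activity. Since $g \neq \id$, there is a shortest word $v$ for which the section $g_v = (g_{v0}, g_{v1})\tau$ has nontrivial root permutation $\tau = \sigma$; such a $v$ exists because an automorphism all of whose sections fix the first level is the identity. By minimality, every proper prefix $u$ of $v$ has $g_u$ with trivial root permutation, and reading off the action of $g$ along the path to $v$ via $(xw)\gamma = (x)\tau(w)\gamma_x$ shows that $g$ fixes the vertex $v$.

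Now I combine this with the identity $(g^2)_v = g_v\, g_{(v)g}$ coming from the section formula $(\gamma\gamma')_x = \gamma_x\gamma'_{(x)\gamma}$. Since $(v)g = v$ and $g^2 = \id$, this gives $g_v^2 = \id$, while $g_v \neq \id$ because it has nontrivial root permutation. Thus $g_v$ is an order-$2$ element of $H$ (using self-similarity, $g_v \in H$) whose root permutation is nontrivial, i.e.\ $g_v \notin \Omega \times \Omega$. This contradicts the highlighted fact that every order-$2$ element of $H$ must fix the first level. Hence $H$ is torsion-free.

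The main obstacle is precisely the issue flagged in the second paragraph: the obvious first-level descent produces an infinite chain of torsion sections and proves nothing, so the whole argument hinges on passing not to an arbitrary section but to the shallowest section $g_v$ that is \emph{active} at its root. Making the descent terminate in a single step requires the observation that $g$ necessarily fixes this particular $v$, which is exactly what converts the global relation $g^2 = \id$ into the local relation $g_v^2 = \id$ and exposes the tension between ``$g_v$ is torsion'' (hence fixes the first level) and ``$g_v$ is active on the first level.''
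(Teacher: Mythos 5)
Your argument is correct, and since the paper does not prove this proposition --- it is quoted from Bartholdi--Nekrashevych \cite{BNIMG} --- the comparison is with the standard argument rather than an in-paper one. Your chain of reductions is sound: torsion elements of $\Omega$ have $2$-power order, so it suffices to rule out involutions; torsion-freeness of $H/N$ forces every involution $g \in H$ into $N$, hence into the first-level stabilizer; a nontrivial $g$ has a shallowest vertex $v$ with active section, the inactivity of the sections at all proper prefixes of $v$ gives $(v)g = v$, so $(g^2)_v = g_v\, g_{(v)g} = g_v^2 = \id$; and iterated self-similarity puts $g_v$ back in $H$, producing an active involution in $H$ --- contradicting the fact that all involutions of $H$ lie in $N$. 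One quibble: your second paragraph overstates the failure of the naive first-level descent. That descent does succeed if one runs it not along a single element but as an induction on levels over \emph{all} involutions of $H$ simultaneously: every involution lies in $N$ and so stabilizes level $1$; and if every involution of $H$ stabilizes level $n$, then writing an involution as $g = (g_0, g_1)$ with $g_0, g_1 \in H$ and $g_i^2 = (g^2)_i = \id$ shows $g$ stabilizes level $n+1$; hence every involution fixes all levels and is trivial. That is the usual Bartholdi--Nekrashevych-style proof and needs no minimal active vertex. Your version trades this level induction for a one-step descent to the shallowest active section; both are correct, and yours has the merit of making the contradiction completely local, at the cost of the (correctly verified) bookkeeping that $g$ fixes $v$.
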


\begin{theorem}\label{tfree}
The group $G=\customlangle\customlangle a_1,\ldots, a_r \customrangle\customrangle$ described in \ref{generators} is torsion free.
\end{theorem}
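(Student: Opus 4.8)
The plan is to apply Proposition~\ref{BN} to $H = G$. Since $G$ is self-similar, it suffices to exhibit a normal subgroup $N \trianglelefteq G$ that is contained in the stabilizer of the first level, i.e.\ in $\ker(\pi_1)$, and for which the quotient $G/N$ is torsion free. Once such an $N$ is produced, Proposition~\ref{BN} yields the torsion-freeness of $G$ directly, so the entire argument reduces to choosing the right $N$ and verifying the two hypotheses.

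The correct choice is $N = N_s$, and three facts need to be checked. First, $N_s$ is normal in $G$ by construction, being the topological closure of the subgroup generated by all $G$-conjugates of $\{a_j : j \not\equiv s,\,1 \pmod r\}$. Second, $N_s$ lies in the stabilizer of the first level: the index $i = s$ is chosen precisely so that the two excluded generators are $a_s$ and $a_1$, which are exactly the generators whose recursive form involves $\sigma$ and hence act nontrivially on level one. Every remaining generator $a_j$ with $j \neq 1,s$ fixes the first level, and since $\ker(\pi_1)$ is normal in $\Omega$, all $G$-conjugates of these generators remain in $\ker(\pi_1)$; passing to the closure gives $N_s \subseteq \ker(\pi_1)$. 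This is in fact already recorded in the proof of Lemma~\ref{Ni}, where it is shown moreover that $N_s = N_{s-1} \times N_{s-1}$, which manifestly stabilizes level one. Third, by Proposition~\ref{infquo} we have $G/N_s \simeq \ZZ_2$, and $\ZZ_2$ is torsion free.

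With these three facts in hand, Proposition~\ref{BN} applies with $H = G$ and $N = N_s$, giving that $G$ is torsion free. I do not expect any genuine obstacle in the final deduction, since the substantive work has already been carried out: the computation $G/N_s \simeq \ZZ_2$ in Proposition~\ref{infquo} (via the index-growth estimate~\eqref{eq:bdquo}) and the Bartholdi--Nekrashevych criterion together do all the heavy lifting. The one point that genuinely requires care is the selection $i = s$ rather than an arbitrary $i$: for a general $i$ the subgroup $N_i$ need not lie in the stabilizer of the first level, because it fails to exclude both of the level-one-moving generators $a_1$ and $a_s$ unless $i = s$, in which case Proposition~\ref{BN} would not be applicable.
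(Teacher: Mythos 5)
Your proposal is correct and follows the paper's own proof essentially verbatim: the paper likewise applies Proposition~\ref{BN} with $N = N_s$, noting that $N_s$ lies in the first-level stabilizer because its generators $a_j$ with $j \neq 1, s$ all fix level one, and that $G/N_s \simeq \ZZ_2$ is torsion free by Proposition~\ref{infquo}. Your additional remark on why $i = s$ is the only valid choice (it is exactly the index excluding both $\sigma$-type generators $a_1$ and $a_s$) is a correct and worthwhile clarification, though the paper leaves it implicit.
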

\begin{proof}
By Proposition~\ref{infquo}, the quotient $G/N_s$ is torsion-free as it is isomorphic to $\ZZ_2$. Since $N_s$ is generated by $a_j=(a_{j-1},\id)$ for $s<i\leq r$ and $a_i=(\id, a_{i-1})$ for $1<i <s$, it is contained in the stabilizer of the first level of the tree. Therefore, by Proposition~\ref{BN}, the group $G$ is torsion-free.
\end{proof}

\section{ Maximal Abelian Quotient of $G$}\label{maq}
Let $[G,G]$ denote the topological closure of the subgroup of $G$ generated by all commutators $[g,h]$ for $g,h \in G$. The quotient $G/[G,G]$ is called the \emph{maximal abelian quotient} of $G$, and is denoted by $G_{\text{ab}}$. Let $N$ be the normalizer of $G$ in $\Omega$. Then $N$ acts on $G_{\text{ab}}$ by conjugation, and this action defines a homomorphism
\[
\varphi\colon N \to \Aut(G_{\text{ab}}).
\]
Since $G$ acts trivially on $G_{\text{ab}}$, $\varphi$ factors through a homomorphism
\[
\bar{\varphi}\colon N/G \to \Aut(G_{\text{ab}}).
\]

\begin{proposition}\label{thm:final}
Let $k$ be a number field. Let $f(x) \in k(x)$ be a PCF quadratic rational function. 
If the induced homomorphism $\bar{\varphi}\colon N(G)/\gm \to \Aut(G_{\text{ab}})$ is injective, then the field of constants associated with $f$ is contained in the cyclotomic field $k(\zeta_{2^n} \colon n \geq 1)$.
\end{proposition}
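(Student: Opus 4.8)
The plan is to combine the purely group-theoretic hypothesis on $\bar{\varphi}$ with the one genuinely arithmetic fact available here, namely that $G_k=\Gal(\bar{k}/k)$ acts on the inertia generators of $\gm(f)$ through the $2$-adic cyclotomic character. By Theorem~\ref{thm:geom} I may conjugate the whole situation so that $\gm(f)=G$; then $N(G)=N_\Omega(\gm(f))$ and $\Gab=\gm(f)^{\mathrm{ab}}$. The exact sequence \eqref{eq:exact} identifies $\Gal(F/k)$ with $\ga(f)/\gm(f)$, and because $\gm(f)$ is normal in $\ga(f)$ we have $\gm(f)\trianglelefteq\ga(f)\subseteq N$. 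Conjugation makes $\ga(f)$ act on $\Gab$; inner automorphisms of $\gm(f)$ act trivially on an abelianization, so this action factors through $\Gal(F/k)=\ga(f)/\gm(f)$ and coincides with the restriction of $\bar{\varphi}$ to $\ga(f)/\gm(f)\subseteq N/\gm(f)$. The hypothesis therefore says precisely that $\Gal(F/k)\into\Aut(\Gab)$ via this conjugation action.

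Next I would show that this action factors through $\chi_2\colon G_k\to\ZZ_2^\times$. The group $\gm(f)$ is topologically generated by the inertia generators $b_{p_1},\dots,b_{p_r}$ of Proposition~\ref{Pink:gen}, and a lift to $\ga(f)$ of $\tau\in G_k$ sends each $b_{p_i}$ to a conjugate of $b_{p_i}^{\chi_2(\tau)}$; this is the standard description of the Galois action on pro-$2$ tame inertia, which applies because $\Omega$, and hence $\gm(f)$, is pro-$2$. Passing to $\Gab$ kills the conjugation, so $\tau$ acts as multiplication by the scalar $\chi_2(\tau)$ on the $\ZZ_2$-module $\Gab$. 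Thus the composite
\[
G_k\twoheadrightarrow\Gal(F/k)\xrightarrow{\ \bar{\varphi}\ }\Aut(\Gab)
\]
equals $G_k\xrightarrow{\chi_2}\ZZ_2^\times\into\Aut(\Gab)$, the last arrow being multiplication by scalars. That scalar map is injective: $\Gab$ surjects $\ZZ_2$-linearly onto $G/N_i\cong\ZZ_2$ by Proposition~\ref{infquo}, and multiplication by $u\in\ZZ_2^\times$ is the identity on $\ZZ_2$ only when $u=1$.

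Finally I would conclude. If $\tau\in\ker\chi_2=\Gal(\bar{k}/k(\zeta_{2^n}:n\geq1))$, then $\tau$ acts trivially on $\Gab$, so its image under $\bar{\varphi}$ is trivial; since $\bar{\varphi}$ is injective, $\tau$ is trivial in $\Gal(F/k)$, i.e.\ it fixes $F$. Hence $\Gal(\bar{k}/k(\zeta_{2^n}:n\geq1))\subseteq\Gal(\bar{k}/F)$, which by the Galois correspondence is equivalent to $F\subseteq k(\zeta_{2^n}:n\geq1)$.

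The main obstacle is the middle step: making the cyclotomic description of the action on $\Gab$ rigorous. One must choose the inertia generators functorially enough that the statement ``$\tau$ raises $b_{p_i}$ to its $\chi_2(\tau)$-th power up to conjugacy'' survives the projection from $\pi_1^{\acute{e}t}(\PP^1_{\bar{k}}\setminus P)$ onto the quotient $\gm(f)$, and, since the $p_i$ need not be $k$-rational, one must verify that any permutation of the post-critical set induced by $G_k$ is itself absorbed into the cyclotomic quotient (so that $\ker\chi_2$ really does act as the identity, with no residual permutation of the inertia classes in $\Gab$). Once the action is known to factor through $\chi_2$, the remaining deduction is the short formal argument above.
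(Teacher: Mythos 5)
Your proposal is correct and takes essentially the same route as the paper's proof: the paper likewise identifies the $\Gal(F/k)$-action on $\Gab$ with the $2$-adic cyclotomic character through the action on the inertia generators---citing Szamuely (Remark~4.7.5) and the Stacks project (Lemma~58.13.5) for exactly the tame-inertia fact you single out as the main obstacle---and then concludes from the injectivity of $\bar{\varphi}$ that $F \subseteq k(\zeta_{2^n} \colon n \geq 1)$. The only difference is one of care: the paper does not address the possible Galois permutation of the post-critical points that you flag, though in the setting at hand such a permutation must commute with the $r$-cycle induced by $f$ on $P$ and preserve the set of critical points, so your caveat is a refinement of, not a departure from, the paper's argument.
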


\begin{proof}
We have a surjection $G_k = \Gal(\bar{k}/k) \to \ga(f)/G$ and the quotient $\ga(f)/G \simeq \Gal(F/k)$, where $k$ is the field of constants associated with $f$. The action of $G_k$ on $G_{\text{ab}}$ factors through $\Gal(F/k)$ via the exact sequence
\[ 1 \to G \to \ga \to \Gal(F/k) \to 1. \]

The Galois group $G_k$ acts on the images of the inertia elements in $G_{\text{ab}}$ via the cyclotomic character; see, for example, \cite[Remark~4.7.5]{Szamuely_2009} or \cite[Lemma~58.13.5]{stacks-project}. 
Since $G_{\text{ab}}$ is generated by the images of the inertia elements, the action of $G_k$ on $G_{\text{ab}}$ is given by the cyclotomic character. 

Hence, the homomorphism $\Gal(F/k) \to \Aut(G_{\text{ab}})$ is given by the cyclotomic character. 
and the injectivity of $\bar{\varphi}$ tells us that $F$ is contained in the cyclotomic field $k(\zeta_{2^n} \colon n \geq 1)$.
\end{proof}

\begin{remark}
\hfill
\begin{enumerate}
\item By results of Hamblen and Jones \cite{HamblenJones}, if $k$ contains the critical points of $f$, then $F$ contains the $2^n$th roots of unity for all $n \geq 1$. Therefore, in this case, $F = k(\zeta_{2^n} \colon n \geq 1)$.
\item The injectivity of $\bar{\varphi} \mid N/G \to \Aut(G_{\text{ab}})$ is essential in the proof above. Without it, the group $\ga(f)/\gm(f)$ may fail to be abelian.
\end{enumerate}
\end{remark}

Next, we describe a simple condition on the structure of the subgroup $[G,G]$ that ensures the injectivity of $\bar{\varphi}$.

\begin{proposition}\label{prop:inj}
Assume that for any $\gamma = (\gamma_0, \gamma_1) \in [G,G]$, the product $\gamma_0 \gamma_1$ is in $[G,G]$. Then the kernel of the homomorphism $\varphi \colon N \to \Aut(G_{\text{ab}})$ is equal to $G$.
\end{proposition}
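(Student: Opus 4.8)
The plan is to establish the two inclusions $G\subseteq\ker\varphi$ and $\ker\varphi\subseteq G$ separately. The first is immediate: conjugation by an element of $G$ induces an inner automorphism of $\Gab$, which is trivial since $\Gab$ is abelian; hence $G\subseteq\ker\varphi$. The whole content is the reverse inclusion, so I fix $\gamma\in\ker\varphi$ and aim to show $\gamma\in G$. Since $a_1=(a_r,\id)\sigma\in G\subseteq\ker\varphi$ interchanges the two first-level subtrees, after multiplying $\gamma$ by $a_1$ if necessary I may assume $\gamma=(\gamma_0,\gamma_1)$ stabilizes the first level, and it then suffices to recover this $\gamma$.

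The first step is to descend the relation $\varphi(\gamma)=\id$ from $\gamma$ to its sections. I would compute $\gamma a_i\gamma^{-1}$ for each generator using the multiplication rule \eqref{semi} and the recursive shapes in \eqref{generators}. Because $\gamma$ normalizes $G$, each $\gamma a_i\gamma^{-1}$ lies in $G$; because $\varphi(\gamma)=\id$, each commutator $\gamma a_i\gamma^{-1}a_i^{-1}$ lies in $[G,G]$, and since $[G,G]$ is contained in the first-level stabilizer, every such commutator has the form $(\delta_0,\delta_1)\in[G,G]$. This is exactly where the hypothesis is used: applying ``$(\delta_0,\delta_1)\in[G,G]\Rightarrow\delta_0\delta_1\in[G,G]$'' to each of these commutators converts a relation on level one into a relation one level down. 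Carrying this out for the four types of generators—and noting that the two sections of $\gamma a_1\gamma^{-1}\in G$ force $g:=\gamma_0\gamma_1^{-1}\in G$, so that conjugation by $\gamma_0$ and by $\gamma_1$ agree modulo $[G,G]$—I expect to obtain $\overline{\gamma_1a_j\gamma_1^{-1}}=\bar a_j$ in $\Gab$ for every $j$, i.e. $\gamma_1\in\ker\varphi$ (and likewise $\gamma_0\in\ker\varphi$).

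The second step is a level-by-level induction proving $\gamma\restr{T_n}\in G_n$ for all $n$, which yields $\gamma\in G$ as $G$ is closed. Writing $\gamma=(g,\id)(\gamma_1,\gamma_1)$ with $g=\gamma_0\gamma_1^{-1}\in G$, the diagonal factor $(\gamma_1,\gamma_1)$ restricts into $G_{n+1}$ by Lemma~\ref{(g,g)} together with the inductive hypothesis applied to $\gamma_1\in\ker\varphi$, so everything reduces to showing $(g,\id)\in G$. For this I would first record the characterization $G\cap(\Omega\times\{\id\})=N_{s-1}\times\{\id\}$: using the decomposition $G=N_s\rtimes\customlangle\customlangle a_s\customrangle\customrangle$ (Corollary~\ref{intersection}), the identity $N_s=N_{s-1}\times N_{s-1}$ (Lemma~\ref{Ni}), and $a_s^2=(a_{s-1},a_{s-1})$, any $(h,\id)\in G$ forces some power $a_{s-1}^m\in N_{s-1}$, whence $m=0$ by torsion-freeness of $G/N_{s-1}\cong\ZZ_2$ (Proposition~\ref{infquo}), giving $h\in N_{s-1}$.

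It remains to pin down $g$. From $\gamma a_s\gamma^{-1}a_s^{-1}=(c_0,c_1)\in[G,G]$ with $c_0=g$, the hypothesis gives $c_0c_1\in[G,G]$, so $[c_0]+[c_1]=0$ in $G/N_{s-1}\cong\ZZ_2$; on the other hand, multiplying $(c_0,c_1)$ by the diagonal $(c_0^{-1},c_0^{-1})\in G$ shows $(\id,c_1c_0^{-1})\in G$, whence $c_1c_0^{-1}\in N_{s-1}$ and $[c_1]=[c_0]$. Since $G/N_{s-1}$ is torsion-free, $2[c_0]=0$ forces $g=c_0\in N_{s-1}$, so $(g,\id)\in N_s\subseteq G$ and the induction closes. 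I expect the principal difficulty to lie precisely in this last combination: producing $(g,\id)\in G$ genuinely requires \emph{both} the hypothesis (to control the product $c_0c_1$) \emph{and} the semidirect-product structure of $G$ (to control the quotient $c_1c_0^{-1}$), and it is the interplay of these two inputs—rather than the generator bookkeeping in the first step or the otherwise routine induction—that carries the proof.
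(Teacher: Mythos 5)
Your proof is correct, and its skeleton is the same as the paper's: reduce modulo $a_1$ to the first-level stabilizer, use the commutators $\gamma a_i \gamma^{-1} a_i^{-1} \in [G,G]$ together with the hypothesis to push $\varphi(\gamma)=\id$ down to the sections $\gamma_0,\gamma_1$, then close by induction on levels (your diagonal factor $(\gamma_1,\gamma_1)$ plays exactly the role of the paper's replacement $\gamma \mapsto \gamma(g_0^{-1},g_0^{-1})$ via Lemma~\ref{(g,g)}). The genuine divergence is in how $g=\gamma_0\gamma_1^{-1}$ is pinned down. You run a parity argument off the $a_s$-commutator: $[c_0]+[c_1]=0$ in $G/N_{s-1}\cong\ZZ_2$ from the hypothesis, $[c_0]=[c_1]$ from $(\id,c_1c_0^{-1})\in G$ together with your characterization $G\cap(\Omega\times\{\id\})=N_{s-1}\times\{\id\}$, hence $2[c_0]=0$ and $g\in N_{s-1}$; this is valid but leans on the heavier structural inputs $G=N_s\rtimes\customlangle\customlangle a_s\customrangle\customrangle$ (Corollary~\ref{intersection}) and Proposition~\ref{infquo}. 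The paper gets the same conclusion in one line: since $G/N_i$ is abelian for every $i$, $[G,G]\subseteq N_s=N_{s-1}\times N_{s-1}$ (Lemma~\ref{Ni}(1)), so the second coordinate $\gamma_0^{-1}\gamma_1$ of the $a_1$-commutator lies in $N_{s-1}$ directly --- no semidirect decomposition, no parity, and no use of the hypothesis at this point (in both proofs the hypothesis is genuinely needed only for the sections-in-kernel step). This means your closing assessment slightly misdiagnoses where the work lies: your $c_0\in N_{s-1}$ already follows from $(c_0,c_1)\in[G,G]\subseteq N_{s-1}\times N_{s-1}$, so the interplay of hypothesis and semidirect structure you highlight, while correct, is dispensable. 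What your route buys in exchange is the cleaner global statements $G\cap(\Omega\times\{\id\})=N_{s-1}\times\{\id\}$ and $(g,\id)\in G$ outright, rather than the paper's level-wise containment $\gamma\restr{T_n}\in N_{s,n}$. One small attribution fix: in your characterization, the implication $a_{s-1}^m\in N_{s-1}\Rightarrow m=0$ follows from the injectivity of the procyclic subgroup generated by $a_{s-1}$ onto $G/N_{s-1}$ (Corollary~\ref{intersection}, via Lemma~\ref{surjZ2}), not from torsion-freeness alone; torsion-freeness of $\ZZ_2$ is correctly invoked only for $2[c_0]=0\Rightarrow[c_0]=0$.
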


\begin{proof}
Let $M := \ker(\varphi)$, so $M$ is a subgroup of $N$ and it acts trivially on $G_{\text{ab}}$. Since $G$ acts trivially on $G_{\text{ab}}$ by definition, we have $G \subset M$. Our goal is to prove that $M \subset G$, i.e., $M = G$.

Let $\gamma \in M$. We will prove by induction on $n \geq 1$ that $\gamma \restr{T_n} \in G_n$. 

We first reduce to the case where $\gamma$ stabilizes the first level. If $\gamma \restr{T_1} \neq \id$, then $\gamma a_1 \restr{T_1} = \id$ (since $a_1$ swaps the subtrees and $\gamma$ must reverse it). Replacing $\gamma$ by $\gamma a_1$ (which still lies in $M$ since $a_1 \in G \subset M$), we may assume that $\gamma = (\gamma_0, \gamma_1) \in N \cap (\Omega \times \Omega)$.

Let us examine the implications of $\gamma \in M$. This means that for all $i = 1, \dots, r$, the commutator
\[
\gamma a_i \gamma^{-1} a_i^{-1}
\]
lies in $[G,G]$. We will show that this implies that both $\gamma_0$ and $\gamma_1$ are also in $M$.

We handle the cases $i \neq 1, s$ first. For $2 \leq i \leq s-1$, we have $a_i = (\id, a_{i-1})$, and $a_s a_i a_s^{-1} = (a_{i-1}, \id)$. Then
\[
\gamma (a_{i-1}, \id) \gamma^{-1} (a_{i-1}^{-1}, \id) = (\gamma_0 a_{i-1} \gamma_0^{-1} a_{i-1}^{-1}, \id).
\]
By assumption on $\gamma \in M$, the left-hand side lies in $[G,G]$, and our hypothesis then implies that $\gamma_0 a_{i-1} \gamma_0^{-1} a_{i-1}^{-1} \in [G,G]$ for all $i \neq 1, s$.

For $i = s$, we compute
\[
\gamma a_s \gamma^{-1} a_s^{-1} = (\gamma_0, \gamma_1)(\id, a_{s-1}) \sigma (\gamma_0^{-1}, \gamma_1^{-1})(a_{s-1}^{-1}, \id)\sigma = (\gamma_0 \gamma_1^{-1}, \gamma_1 a_{s-1} \gamma_0^{-1} a_{s-1}^{-1}),
\]
which lies in $[G,G]$ since $\gamma \in M$. Again using the assumption on $[G,G]$, we deduce that
\[
\gamma_0 a_{s-1} \gamma_0^{-1} a_{s-1}^{-1} \in [G,G].
\]
Thus, $\gamma_0 \in M$.

A similar argument using the conjugates $b_i := a_1 a_i a_1^{-1} = (\id, a_{i-1})$ for $i > s$, shows that $\gamma_1 \in M$.

We now inductively show that $\gamma \restr{T_n} \in G_n$ for all $n$. The base case $n = 1$ is clear. Assume the inductive hypothesis holds for level $n-1$.

Since $\gamma_1 \in M$, there exists $g_0 \in G$ such that $\gamma_1 g_0^{-1} \restr{T_{n-1}} = \id$. Then, by Lemma~\ref{(g,g)}, the element $(g_0, g_0) \in G$, so replacing $\gamma$ by $\gamma (g_0^{-1}, g_0^{-1})$, we may assume that
\[
\gamma \restr{T_n} = (\gamma_0 \restr{T_{n-1}}, \id).
\]

We now use the fact that
\[
\gamma a_1 \gamma^{-1} a_1^{-1} = (\gamma_0 a_r \gamma_1^{-1} a_r^{-1}, \gamma_0^{-1} \gamma_1)
\]
lies in $[G,G]$. We claim that $[G,G]$ is a subgroup of $N_i$ for all $i \in \{1,\ldots, r \}$. This follows since each $G/N_i$ is abelian and $G/[G,G]$ is the maximal abelian quotient of $G$. 

Since $[G,G] \subset N_s = N_{s-1} \times N_{s-1}$, we conclude that $\gamma_0^{-1} \gamma_1$ lies in $N_{s-1}$. But $\gamma_1 = \id$ on level $n-1$, so $\gamma_0 \restr{T_{n-1}} \in N_{s-1, n-1}$.

It follows that $\gamma \restr{T_n} = (\gamma_0 \restr{T_{n-1}}, \id)$ lies in $N_{s,n}$, which is a subgroup of $G_n$. This completes the inductive step, and hence $\gamma \in G$.
\end{proof}

 \subsection{The case $s=2$, where the critical points of $f(x)$ are $p_r$ and $p_1$ }

Throughout this section, we assume that $s = 2$, so the generators
$a_1, \ldots, a_r \in \Omega$ are defined recursively by
\begin{align}\label{generators}
\begin{split}
a_1 &= (a_r, \id)\sigma, \\
a_2 &= (\id, a_1)\sigma, \\
a_i &= (a_{i-1}, \id) \quad \text{for } 3 \leq i \leq r.
\end{split}
\end{align}

For each $i$, the subgroup $N_i$ is the topological closure of the subgroup generated by the $G$-conjugates of the elements $a_j$ with $j \not\equiv i, i-1 \pmod{r}$. We remind the reader that the indices of both the subgroups $N_j$ and the generators $a_i$ are interpreted modulo~$r$. We now verify that the assumption in Proposition~\ref{prop:inj} holds in this case.

\begin{theorem}\label{Gabsurj}
For any $i \in \{1, \ldots, r\}$, the homomorphism
\[
G \to G/N_i \times G/N_{i+1} \times \cdots \times G/N_{i+r-2}
\]
is surjective.
\end{theorem}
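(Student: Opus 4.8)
The target of this homomorphism is a product of $r-1$ copies of $\ZZ_2$, since $G/N_k \simeq \ZZ_2$ for every $k$ by Proposition~\ref{infquo}. The plan is to exploit that $\ZZ_2^{r-1}$ is an abelian pro-$2$ group and thereby reduce surjectivity to a linear statement over $\FF_2$. Write $\Psi\colon G \to \prod_{k=i}^{i+r-2} G/N_k$ for the combined map. Its image $\Psi(G)$ is a closed (equivalently, since $G$ is compact, a closed $\ZZ_2$-sub\-module) subgroup of $\ZZ_2^{r-1}$. If $\Psi(G)$ surjects onto the Frattini quotient $\ZZ_2^{r-1}/2\ZZ_2^{r-1} \simeq \FF_2^{r-1}$, then $\Psi(G) + 2\ZZ_2^{r-1} = \ZZ_2^{r-1}$, and Nakayama's lemma for the finitely generated $\ZZ_2$-module $\ZZ_2^{r-1}$ forces $\Psi(G) = \ZZ_2^{r-1}$. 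Hence it suffices to prove that the reductions modulo $2$ of the images of the generators $a_1,\dots,a_r$ span $\FF_2^{r-1}$.

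Second, I would compute these reductions explicitly. By the Remark following Corollary~\ref{intersection}, the isomorphism $G/N_k \simeq \ZZ_2$ sends $a_k$ to a topological generator and satisfies $a_k N_k = a_{k-1}^{-1} N_k$ (here using $s=2$), while $a_j \in N_k$ whenever $j \not\equiv k,\,k-1 \pmod r$. Reducing modulo $2$ identifies the generator with its inverse, so the image of $a_j$ in $G/N_k \otimes \FF_2$ is nonzero exactly when $k \equiv j$ or $k \equiv j+1 \pmod r$. Indexing the coordinates of $\FF_2^{r-1}$ by $k \in \{i, i+1, \dots, i+r-2\}$, the image $v_j \in \FF_2^{r-1}$ of $a_j$ is therefore supported on $\{j, j+1\} \cap \{i,\dots,i+r-2\}$; that is, $v_j = e_j + e_{j+1}$ with the single omitted coordinate $e_{i-1}$ simply discarded.

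The final step is the linear-algebra claim that $\{v_1,\dots,v_r\}$ spans $\FF_2^{r-1}$. In the full space $\FF_2^{r}$, with coordinates indexed by $\ZZ/r$, the vectors $e_j + e_{j+1}$ are the edge vectors of a cycle and span the even-weight subspace $W := \{x : \textstyle\sum_k x_k = 0\}$, which has dimension $r-1$. The projection $\FF_2^r \to \FF_2^{r-1}$ that forgets the coordinate $i-1$ has kernel $\langle e_{i-1}\rangle$; since $e_{i-1}$ has odd weight it does not lie in $W$, so the projection restricts to an isomorphism $W \isomto \FF_2^{r-1}$, and the $v_j$ span $\FF_2^{r-1}$. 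Equivalently, one telescopes directly: $v_{i-1}$ equals $e_i$ (its $e_{i-1}$ entry is discarded), then $e_{i+1} = v_i + v_{i-1}$, $e_{i+2} = v_{i+1} + v_i + v_{i-1}$, and so on, recovering every standard basis vector.

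I expect the genuine content to lie in bookkeeping rather than in any single hard idea: correctly transporting the relation $a_k N_k = a_{k-1}^{-1} N_k$ through the reduction modulo $2$, and seeing that deleting exactly one of the $r$ factors $G/N_k$ is precisely what makes the remaining $r-1$ vectors span. This works because $\sum_j v_j = 0$ is the unique relation among the cyclic edge vectors, matching the relation $a_1 \cdots a_r = \id$ in $G_{\text{ab}}$, and removing one coordinate kills exactly that relation. The pro-$2$ Nakayama reduction and the cycle-space computation are both routine once the images $v_j$ are pinned down, so the mod-$2$ computation in the second step is the main point to get right.
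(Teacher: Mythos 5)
Your proof is correct. The paper's own argument is identical in substance but different in packaging: instead of reducing mod $2$, it constructs explicit preimages, setting $\gamma_j := \prod_{\ell=j}^{i+r-2} a_\ell$ for each $j \in \{i, \ldots, i+r-2\}$, and verifies from the same two facts you use --- that $a_\ell \in N_k$ whenever $\ell \not\equiv k, k-1 \pmod{r}$, and that $a_{k-1}a_k \equiv \id \pmod{N_k}$ because $a_1 \cdots a_r = \id$ --- that $\gamma_j \equiv a_j \pmod{N_j}$ while $\gamma_j \equiv \id \pmod{N_k}$ for all other coordinates $k$; surjectivity then follows because these images topologically generate the product and the image of the compact group $G$ is closed. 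Your telescoping identities are precisely the mod-$2$ shadows of these suffix products: $\sum_{\ell=j}^{i+r-2} v_\ell = e_j + e_{i+r-1} = e_j$ once the coordinate $i-1$ is discarded. What your route buys is twofold: the topological step the paper leaves implicit (closed image plus topological generation) is made rigorous once and for all by the Frattini/Nakayama reduction, and the combinatorial heart is isolated as a transparent statement about the cycle space of $\FF_2^r$, where deleting one coordinate kills exactly the unique relation $\sum_j v_j = 0$ corresponding to $a_1 \cdots a_r = \id$. What the paper's version buys is constructivity: it exhibits actual elements of $G$ mapping to the standard topological generators of $\prod_k G/N_k$, which is then reused in the worked example following the theorem (where the element $(a_1a_2a_3)^{k_1}(a_2a_3)^{k_2}a_3^{k_3}$ realizes an arbitrary $(k_1,k_2,k_3) \in \ZZ_2^3$). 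One small point worth tightening in your write-up: the image of a continuous homomorphism into $\ZZ_2^{r-1}$ is a $\ZZ_2$-submodule only because it is closed (compactness of $G$), as you note parenthetically; with that in place, Nakayama applies exactly as you say, and the rest is the bookkeeping you correctly identified as the main content.
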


\begin{proof}
For any $i \in \{1, \ldots, r\}$, the quotient group $G/N_i$ is topologically generated by the image of $a_i$. Fix $1 \leq i \leq r$. For each $j$ with $i \leq j \leq i + r - 2$, we construct an automorphism $\gamma_j \in G$ such that $\gamma_j N_j = a_j N_j$ and $\gamma_j N_k = N_k$ for all other $k$ in $\{i, \ldots, i + r - 2\}$ with $k \ne j$.

Set
\[
\gamma_j := \prod_{\ell = j}^{i + r - 2} a_\ell \in G.
\]
We claim this satisfies the desired properties. Since $N_j$ contains the generators $a_{j+1}, \ldots, a_{i + r - 2}$, it follows that $\gamma_j \equiv a_j \pmod{N_j}$. On the other hand, for any $k \ne j$, $N_k$ contains all $a_\ell$ except possibly $a_k$ and $a_{k-1}$ (modulo $r$). So either $\gamma_j \in N_k$, or $\gamma_j \equiv a_{k-1} a_k \pmod{N_k}$. Since the product $a_1 \cdots a_r = \id$, it follows that $a_{k-1} a_k \equiv \id \pmod{N_k}$, so $\gamma_j \equiv \id \pmod{N_k}$ as required.
\end{proof}

\begin{example}
Let $r = 4$ and $s = 2$. Then the map
\[
G \to G/N_1 \times G/N_2 \times G/N_3
\]
is surjective. Since each quotient $G/N_i \simeq \ZZ_2$ via the map $a_i \mapsto 1$, we can describe the induced homomorphism $G \to \ZZ_2^3$ explicitly. Let $k_1, k_2, k_3 \in \ZZ_2$. Then the image of
\[
\gamma = (a_1 a_2 a_3)^{k_1} (a_2 a_3)^{k_2} a_3^{k_3}
\]
is $(k_1, k_2, k_3) \in \ZZ_2^3$. Since the target group is abelian, this is equal to the image of
\[
a_1^{k_1} a_2^{k_1 + k_2} a_3^{k_1 + k_2 + k_3}.
\]
\end{example}

\begin{corollary}\label{cor:com}
For any $i \in \{1,\ldots,r\}$, the commutator subgroup $[G,G] = N_i \cap \cdots \cap N_{i+r-2}$, and
\[
G_{\text{ab}} \simeq G/N_i \times \cdots \times G/N_{i+r-2} \simeq \ZZ_2^{r-1}.
\]
\end{corollary}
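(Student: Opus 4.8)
The plan is to exploit the surjection already furnished by Theorem~\ref{Gabsurj} together with the $\ZZ_2$-linear rigidity of Lemma~\ref{surjZ2}, squeezing $G_{\text{ab}}$ between two copies of $\ZZ_2^{r-1}$. Fix $i$ and let $\psi\colon G \to \prod_{j=i}^{i+r-2} G/N_j$ be the product of the canonical projections, so that $\ker\psi = N_i \cap \cdots \cap N_{i+r-2}$. By Proposition~\ref{infquo} each quotient $G/N_j$ is abelian (indeed isomorphic to $\ZZ_2$), so every commutator of $G$ dies in $G/N_j$; this gives the easy inclusion $[G,G] \subseteq N_j$ for every $j$, and hence $[G,G] \subseteq \ker\psi$. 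Because the target of $\psi$ is abelian, $\psi$ factors through the maximal abelian quotient, yielding a continuous homomorphism $\bar\psi\colon G_{\text{ab}} \to \ZZ_2^{r-1}$ whose surjectivity is exactly the content of Theorem~\ref{Gabsurj}.

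Next I would produce a surjection in the opposite direction. Since $\Omega$ is a pro-$2$ group and $G$ is closed in $\Omega$, the abelianization $G_{\text{ab}}$ is a pro-$2$ abelian group, that is, a $\ZZ_2$-module. The relation $a_1 \cdots a_r = \id$ of Lemma~\ref{b-rel} shows that in $G_{\text{ab}}$ the image of $a_{i-1}$ equals the negative of the sum of the images of $a_i, \ldots, a_{i+r-2}$, so these $r-1$ elements topologically generate $G_{\text{ab}}$. Sending the standard generators of the free pro-$2$ abelian group $\ZZ_2^{r-1}$ to them produces a continuous surjection $q\colon \ZZ_2^{r-1} \to G_{\text{ab}}$.

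Finally, the composite $\bar\psi \circ q\colon \ZZ_2^{r-1}\to \ZZ_2^{r-1}$ is a surjective $\ZZ_2$-linear endomorphism, hence an isomorphism by Lemma~\ref{surjZ2}. Consequently $q$ is injective and $\bar\psi$ is injective, so both are isomorphisms. The isomorphism $\bar\psi\colon G_{\text{ab}} \isomto \prod_{j=i}^{i+r-2} G/N_j$ gives at once the product decomposition $G_{\text{ab}} \simeq G/N_i \times \cdots \times G/N_{i+r-2} \simeq \ZZ_2^{r-1}$, while its injectivity forces $\ker\psi = [G,G]$, that is $[G,G] = N_i \cap \cdots \cap N_{i+r-2}$.

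I expect the only delicate point to be confirming that $G_{\text{ab}}$ is a $\ZZ_2$-module topologically generated by \emph{exactly} $r-1$ elements, so that the source of $q$ is genuinely $\ZZ_2^{r-1}$ (rather than a larger free profinite abelian group such as $\widehat{\ZZ}^{\,r-1}$) and Lemma~\ref{surjZ2} applies verbatim; this rests on the pro-$2$ nature of $\Omega$ and on the single relation $a_1\cdots a_r=\id$. Once the two-sided surjection is in place, the conclusion is formal.
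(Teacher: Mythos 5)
Your proposal is correct and follows essentially the same route as the paper: both sandwich $G_{\text{ab}}$ between the surjection of Theorem~\ref{Gabsurj} and the generation by $r-1$ of the $a_j$ (via $a_1\cdots a_r=\id$), then invoke Lemma~\ref{surjZ2} to upgrade the surjective $\ZZ_2$-linear self-map of $\ZZ_2^{r-1}$ to an isomorphism, identifying $[G,G]$ with $N_i\cap\cdots\cap N_{i+r-2}$. Your explicit factorization through $q\colon \ZZ_2^{r-1}\to G_{\text{ab}}$ merely spells out the step the paper states tersely (``$\phi$ is a surjective homomorphism from $\ZZ_2^{r-1}$ to itself''), and your flagged delicate point --- that $G_{\text{ab}}$ is a pro-$2$, hence $\ZZ_2$-module, quotient --- is indeed the right justification and holds since $\Omega=\varprojlim \Omega_n$ is pro-$2$.
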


\begin{proof}
By Proposition~\ref{infquo}, the group $G/N_i$ is abelian and topologically generated by the image of $a_i$ for all $i \in \{1,\ldots,r\}$. By Theorem~\ref{Gabsurj}, we have a surjective homomorphism
\[
\phi\colon G_{\text{ab}} \to \prod_{j=1}^{r-1} \ZZ_2.
\]
Moreover, $G_{\text{ab}}$ is generated by the images of any $r-1$ elements from the set $\{a_1, \ldots, a_r\}$. Therefore, $\phi$ is a surjective homomorphism from $\ZZ_2^{r-1}$ to itself. By Lemma~\ref{surjZ2}, $\phi$ is an isomorphism. The kernel of the map $G \to G_{\text{ab}}$ is thus $N_i \cap \cdots \cap N_{i + r - 2}$ for any $i \in \{1, \ldots, r\}$.
\end{proof}

\begin{corollary}\label{cor:com}
If $\gamma = (\gamma_0, \gamma_1)$ is an element of the subgroup $[G,G]$, then $\gamma_0 \gamma_1 \in [G,G]$.
\end{corollary}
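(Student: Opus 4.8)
The plan is to deduce this directly from the intersection description of the commutator subgroup combined with the third part of Lemma~\ref{Ni}. First I would record that, since each quotient $G/N_j$ is abelian by Proposition~\ref{infquo}, the commutator subgroup satisfies $[G,G] \subseteq N_j$ for every $j \in \{1,\ldots,r\}$. Corollary~\ref{cor:com} identifies $[G,G]$ with the intersection of any $r-1$ consecutive subgroups $N_i \cap \cdots \cap N_{i+r-2}$. Since the one missing factor $N_{i-1}$ already contains $[G,G]$, adjoining it cannot shrink the intersection, and we obtain the slightly stronger statement that $[G,G] = \bigcap_{j=1}^{r} N_j$.

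Next, given $\gamma = (\gamma_0,\gamma_1) \in [G,G]$, I would fix $i \in \{1,\ldots,r\}$ and use the containment $[G,G]\subseteq N_i$ to invoke Lemma~\ref{Ni}(3): because $(\gamma_0,\gamma_1)\in N_i$, that lemma yields $\gamma_0\gamma_1 \in N_{i-1}$. Letting $i$ range over all of $\{1,\ldots,r\}$, the index $i-1$ ranges over every residue modulo $r$ as well, so $\gamma_0\gamma_1$ lies in $N_{i-1}$ for each $i$, that is $\gamma_0\gamma_1 \in \bigcap_{j=1}^{r} N_j = [G,G]$. This is exactly the assertion.

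I do not anticipate a genuine obstacle, as the essential content is already packaged in Lemma~\ref{Ni}(3) and the intersection description of $[G,G]$; the argument is little more than applying that lemma simultaneously for all $i$. The only point requiring a moment's care is the upgrade from ``intersection of $r-1$ consecutive $N_j$'' to ``intersection of all $N_j$'', which rests on the elementary fact that $[G,G]\subseteq N_j$ for every $j$. It is also worth noting, though not strictly needed, that any element of $[G,G]$ automatically stabilizes the first level, since it lies in the kernel of the homomorphism $\sgn_1$; thus the hypothesis that $\gamma$ has the form $(\gamma_0,\gamma_1)$ entails no loss of generality and the statement covers all of $[G,G]$.
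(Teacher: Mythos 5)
Your proof is correct and follows essentially the same route as the paper: both apply Lemma~\ref{Ni}(3) to the intersection description of $[G,G]$ from the preceding corollary, letting the index range so that $\gamma_0\gamma_1$ lands in every relevant $N_j$. Your small upgrade to $[G,G]=\bigcap_{j=1}^{r}N_j$ (via $[G,G]\subseteq N_j$ for all $j$) is just a cosmetic repackaging of the paper's use of the arbitrary starting index $i$ in that corollary, and your closing remark that elements of $[G,G]$ automatically stabilize the first level is a correct observation.
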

\begin{proof}
By Corollary~\ref{cor:com}, $[G,G] = N_1 \cap N_2 \cap \cdots \cap N_{r-1}$, so $\gamma \in N_i$ for all $i = 1, \ldots, r - 1$. By Lemma~\ref{Ni}(3), for each such $i$, the product $\gamma_0 \gamma_1 \in N_{i - 1}$. Considering the indices modulo $r$, we conclude that $\gamma_0 \gamma_1 \in N_r \cap N_1 \cap \cdots \cap N_{r-2} = [G,G]$, as required.
\end{proof}

\begin{corollary}\label{kerN/G}
Let $s = 2$. Then the kernel of the homomorphism $\varphi\colon N \to \Aut(G_{\text{ab}})$ is exactly $G$.
\end{corollary}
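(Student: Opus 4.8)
The plan is to derive this corollary as an immediate consequence of Proposition~\ref{prop:inj}, whose conclusion is precisely the statement that $\ker(\varphi) = G$. That proposition carries a single hypothesis: that $[G,G]$ is closed under the operation sending $\gamma = (\gamma_0,\gamma_1)$ in the first-level stabilizer to the product $\gamma_0\gamma_1$. My entire task therefore reduces to verifying this closure property in the case $s = 2$.

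To that end I would invoke Corollary~\ref{cor:com}, which asserts exactly that every $\gamma = (\gamma_0,\gamma_1) \in [G,G]$ satisfies $\gamma_0\gamma_1 \in [G,G]$. The justification of that corollary in the present $s=2$ setting rests on the identification $[G,G] = N_1 \cap N_2 \cap \cdots \cap N_{r-1}$ furnished by Theorem~\ref{Gabsurj} and the description $G_{\text{ab}} \simeq \ZZ_2^{r-1}$, combined with Lemma~\ref{Ni}(3), which carries the product $\gamma_0\gamma_1$ into each shifted factor $N_{i-1}$. Reading the indices modulo $r$ then places $\gamma_0\gamma_1$ in $N_r \cap N_1 \cap \cdots \cap N_{r-2}$, and the index-independence of the commutator description recovers $[G,G]$.

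With the hypothesis of Proposition~\ref{prop:inj} thus confirmed for $s = 2$, its conclusion yields $\ker(\varphi) = G$ directly, completing the argument. I do not anticipate a genuine obstacle here: the substantive content lives entirely in the two preceding results, and the only point requiring care is the cyclic bookkeeping of the indices modulo $r$ when matching the intersection defining $[G,G]$ against its shift—a matter already dispatched in Corollary~\ref{cor:com}.
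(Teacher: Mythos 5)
Your proposal is correct and takes essentially the same route as the paper, which likewise deduces Corollary~\ref{kerN/G} immediately from Proposition~\ref{prop:inj} once its hypothesis on $[G,G]$ is supplied by Corollary~\ref{cor:com} (itself resting on the identification $[G,G]=N_1\cap\cdots\cap N_{r-1}$ and Lemma~\ref{Ni}(3), with the cyclic shift of indices handled exactly as you describe). There is nothing to add: the verification of the closure property $\gamma=(\gamma_0,\gamma_1)\in[G,G]\Rightarrow\gamma_0\gamma_1\in[G,G]$ is the whole content, and you have reproduced it faithfully.
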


\begin{proof}
The result follows immediately from Corollary~\ref{cor:com} and Proposition~\ref{prop:inj}.
\end{proof}

\begin{corollary}
Let $k$ be a number field. Let $f(x) \in k(x)$ be a quadratic rational function whose post-critical set is $\{p_1, p_2, \ldots, p_r\}$ where $f(p_{k-1}) = p_k$ and the critical points are $p_r$ and $p_1$. Then the field of constants associated with $f$ is contained in the cyclotomic field $k\big(\zeta_{2^n} \colon n \geq 1\big).
$
\end{corollary}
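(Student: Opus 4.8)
The plan is to recognize this statement as the $s = 2$ specialization of Theorem~\ref{thm:intro2} and to feed it into the criterion of Proposition~\ref{thm:final}. Since the two critical points are $p_r$ and $p_1$ while the branch points in \eqref{pcorbit} are $p_1$ and $p_s$ with $p_{s-1}$ critical, the hypothesis forces $s - 1 = 1$, i.e. $s = 2$. Hence every result of the subsection treating the case $s = 2$ applies to the geometric model group $G$ attached to $f$.

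By Proposition~\ref{thm:final}, it suffices to prove that the conjugation action $\bar\varphi\colon N/G \to \Aut(G_{\text{ab}})$ of the normalizer $N = N_{\Omega}(G)$ is injective; the field of constants $F$ then embeds into $k(\zeta_{2^n} : n \geq 1)$ through the cyclotomic character, exactly as in that proof. First I would invoke Corollary~\ref{kerN/G}, which in the case $s = 2$ identifies $\ker(\varphi\colon N \to \Aut(G_{\text{ab}}))$ with $G$ itself, so that $\bar\varphi$ is injective. No new computation is required, since Corollary~\ref{kerN/G} is precisely the combination of Proposition~\ref{prop:inj} with the structural fact, established in Corollary~\ref{cor:com}, that $\gamma_0\gamma_1 \in [G,G]$ whenever $(\gamma_0,\gamma_1) \in [G,G]$.

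The one point deserving care is that the results just cited are phrased for the abstract model $G$, whereas Proposition~\ref{thm:final} concerns the geometric IMG $\gm(f)$. By Theorem~\ref{thm:geom} there is $\beta \in \Omega$ with $\gm(f) = \beta G \beta^{-1}$, and conjugation by $\beta$ is an automorphism of $\Omega$ carrying $N_{\Omega}(G)$ onto $N_{\Omega}(\gm(f))$ and inducing an isomorphism $G_{\text{ab}} \isomto \gm(f)_{\text{ab}}$ under which the two conjugation representations correspond. Thus the injectivity of $\bar\varphi$ transfers from $G$ to $\gm(f)$. I expect this transfer to be the only genuine obstacle, and it is a mild one: one merely verifies that the normalizer, the abelianization, and the induced $\Aut$-representation are all equivariant under an ambient conjugation. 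Granting this, Proposition~\ref{thm:final} applies verbatim and yields $F \subset k(\zeta_{2^n} : n \geq 1)$, which completes the proof.
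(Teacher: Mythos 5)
Your proposal is correct and follows essentially the same route as the paper: the corollary is the $s=2$ case, where Corollary~\ref{kerN/G} (itself the combination of Proposition~\ref{prop:inj} with the structural fact $\gamma_0\gamma_1 \in [G,G]$ for $(\gamma_0,\gamma_1)\in[G,G]$) gives injectivity of $\bar\varphi$, and Proposition~\ref{thm:final} then places the field of constants inside $k(\zeta_{2^n}\colon n\geq 1)$. Your explicit verification that normalizer, abelianization, and the conjugation representation transfer under the ambient conjugation $\gm(f)=\beta G\beta^{-1}$ of Theorem~\ref{thm:geom} is a point the paper leaves implicit, and it is a worthwhile addition rather than a deviation.
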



\end{document}